\documentclass[letter,reqno,11pt]{amsart}
\usepackage[margin=1in]{geometry}
\usepackage[utf8]{inputenc}
\usepackage{graphicx, amsfonts, amssymb, amsthm, mathtools,multirow,array}
\usepackage{soul}
\usepackage{hyperref}
\usepackage{enumerate}
\usepackage{verbatim}
\usepackage{tabularx}
\usepackage{colortbl}
\usepackage[nameinlink]{cleveref}
\hypersetup{	colorlinks,	linkcolor={blue},	citecolor={blue},urlcolor={blue}}
\definecolor{Gray}{gray}{0.9}
\newlength\tindent
\setlength{\tindent}{\parindent}

\newtheorem{theorem}{Theorem}[section]
\newtheorem{prop}[theorem]{Proposition}

\newtheorem{lemma}[theorem]{Lemma}
\newtheorem{conjecture}[theorem]{Conjecture}
\newtheorem{corollary}[theorem]{Corollary}

\numberwithin{equation}{section}

\newcommand{\f}{\mathbb{F}}
\newcommand{\fp}{{\mathbb{F}_p}}
\newcommand{\fq}{{\mathbb{F}_q}}

\newcommand{\fqe}{\mathbb{F}_{q^e}}

\definecolor{purple}{rgb}{0.54, 0.17, 0.89}
\definecolor{dgreen}{rgb}{0.0, 0.5, 0.0}

\title{Permutation binomials of the form  $x^r(x^{q-1}+a)$ over $\mathbb F_{q^e}$}
\author{Ariane M. Masuda, Ivelisse  Rubio, and Javier Santiago}

\address{Department of Mathematics, New York City College of Technology, The City University of New York (CUNY), 300 Jay Street, Brooklyn, NY 11201 USA}

\email{amasuda@citytech.cuny.edu}

\address{Department of Computer Science,
University of Puerto Rico, R\'{i}o Piedras,
17 Ave Universidad, Ste 1701, 
San Juan, Puerto Rico 00925-2537}

\email{ivelisse.rubio@upr.edu, javier.santiago16@upr.edu}

\keywords{Permutation binomial, permutation polynomial, finite field, index}

\begin{document}

\maketitle

\begin{abstract}
       We present several existence and nonexistence results for permutation binomials of the form  $x^r(x^{q-1}+a)$,  where $e\geq 2$ and $a\in \mathbb{F}_{q^e}^*$.  As a consequence, we obtain a complete characterization of such  permutation binomials over $\mathbb{F}_{q^2}$, $\mathbb{F}_{q^3}$, $\mathbb{F}_{q^4}$,  $\mathbb{F}_{p^5}$, and $\mathbb{F}_{p^6}$, where $p$ is an odd prime. 
\end{abstract}

\section{Introduction}\label{intro}

Let $q$ be a prime power and $\mathbb F_q$ denote the finite field of order $q$. Any function from $\mathbb F_q$  to $\mathbb F_q$  is uniquely  represented by a polynomial of degree less than $q$ via the modular reduction $x^q-x$.  A polynomial in $\mathbb F_{q}[x]$ is a {\it{permutation polynomial}} over $\mathbb{F}_q$  if it induces a bijection in  $\mathbb{F}_q$. 

The study of permutation polynomials over finite fields dates back to the 1800s with the work of Hermite~\cite{hermite}. Since then, many researchers have investigated them, not only because they are interesting on their own, but also due to their applications in areas such as coding theory, cryptography, and finite geometry.  Permutation monomials have been completely characterized as $x^r$ permutes $\mathbb{F}_q$ if and only if  $\gcd(r, q-1)=1$. In the absence of a general and simple permutation criterion for binomials, permutation binomials have been the focus of many studies. As a matter of fact, the majority of the results on permutation polynomials concern permutation binomials as they already impose many challenges, despite their simple form.   Given  $x^m+ax^n$ in $\fq[x]$, it is desirable to find simple relationships between $m$, $n$, $a$, and $q$ that make  the binomial  permute $\fq$. This is usually  achieved by   specializing these parameters to particular forms. We refer the interested readers to the surveys~\cite{MR3293406,MR3329981,MR3087321} for an excellent overview of the area. 

Besides the number of terms of the polynomial, there are other parameters  such as the degree, the Carlitz rank, and the index that are relevant in many problems involving polynomials over finite fields. The concept of {\it{index}}  was introduced by Akbary, Ghioca, and Wang~\cite{akbary2009permutation} in 2009; we postpone the precise definition to~\Cref{premi}. It turns out that
any nonconstant polynomial  of index $\ell$ and degree at most $q-1$ in 
$\mathbb{F}_q[x]$ can be uniquely expressed as $\alpha x^rh\left(x^{(q-1)/\ell}\right)+ \beta$.  Wang~\cite{Wa2019} wrote a survey on permutation polynomials classified by their indices, indicating that this might be a promising way of organizing  the many existing results. For instance,  a result by Masuda and Zieve in~\cite{MaZiTransAMS2009} can be stated as: the index of any permutation binomial over the prime field $\fp$ is always less than $\sqrt p -1$.

Wang also suggested a list of open problems in his survey. Problem 7 is ``{\it{Construct and classify permutation polynomials of $\mathbb F_{q^e}$ with intermediate indices such as $q^{e-1}+\cdots+q+1$, $c( q^{e-1}+\cdots+q+1)$, or $(q^{e-1}+\cdots+q+1)/d$, where $c$ is a positive factor of $q-1$, and $d$ is a positive factor of $q^{e-1}+\cdots+q+1$.}}"  Not only there are not many results in this direction, the ones listed in~\cite{Wa2019} for $e\geq 3$ involve polynomials with more than two terms.

We now turn our attention to  permutation binomials of a special type. Let $S$ be the collection of all $(q^e,r,t,a)\in\mathbb N^3\times\mathbb F_{q^e}^*$, $e\geq 2$, such that $x^r(x^{t(q-1)}+a)$ permutes $\mathbb F_{q^e}$. Most of the results in the literature deal with $e=2$ for which the index is  $(q+1)/\gcd (q+1,t)$ from Problem 7.  
In this case,  under the assumptions that $\gcd(rp,t(q-1))=1$, where $p=\mathrm{char}\;\mathbb F_q$ and $(-a)^{(q+1)/\gcd(q+1,t)}\neq 1$,  Hou~\cite{houarxiv2016} grouped  several results that appear in~\cite{MR3293406,MR3071828,  MR3587259, MR3276311, MR3333142,  zarxivredei13} into the following four  families:
\begin{enumerate}[\normalfont(i)]
\item $(q^2,r,t,a)$, $a^{q+1}=1$, $\gcd(r-t,q+1)=1$
\item $(q^2,r,1,a)$,  $r \equiv 1 \pmod{q+1}$
\item $(q^2,1,2,a)$, $(-a)^{(q+1)/2}=3$
\item $(q^2,3,2,a)$, $(-a)^{(q+1)/2}=1/3$
\end{enumerate}
and conjectured that these are the only infinite ones in $S$ with $e=2$.

In this paper we   investigate  conditions for $(q^e,r,1,a)$ to belong to $S$. In other words, we consider
 binomials of the form 
$f(x)=x^r\left(x^{q-1}+ a\right)$  over $\mathbb F_{q^e}$ for $e\geq 2$. When $r \leq q^e-q$, the index of $f(x)$ is $\ell=q^{e-1}+\cdots+q+1$, so our results contribute to the solution of Problem 7.
An immediate necessary condition is that $(-a)^{\ell}\neq 1$, since otherwise $f(x)$  would have more than one root. Hou's second family concerns the case $e=2$, and
Liu studied the case $e=3$. We state their results  as follows.

\begin{prop}
 \label{liliu}
Let $f(x) = x^r(x^{q-1} + a) \in \mathbb{F}_{q^e}[x]$ with $e\in\{2,3\}$ and $a\neq 0$, and let
$\ell=q^{e-1}+\cdots+q+1$. 
\begin{enumerate}[\normalfont(i)]
    \item  \cite[Theorem 4.2]{houarxiv2016}  Suppose $e=2$. Then $f(x)$ permutes $\mathbb F_{q^2}$ if and only if  $a^{\ell}\neq 1$, $\gcd(r,q-1)=1$, and $r \equiv 1 \pmod{\ell}$.

    \item  \cite[Theorem 1]{LiuArX2019}
     Suppose $e=3$, $q$ is odd, and $r\leq \ell$. Then $f(x)$ permutes  $\mathbb{F}_{q^3}$ if and only if $(-a)^{\ell} \ne 1$ and $r = 1$.  
\end{enumerate}
\end{prop}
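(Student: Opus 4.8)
The plan is to run everything through the index (Akbary--Ghioca--Wang) criterion. Since $q^{e}-1=(q-1)\ell$ with $\ell=\ind$ and $x^{q-1}=x^{(q^{e}-1)/\ell}$, we may write $f(x)=x^{r}h\!\left(x^{(q^{e}-1)/\ell}\right)$ with $h(y)=y+a$, so $f$ has index $\ell$. The criterion reduces ``$f$ permutes $\fqe$'' to the pair of conditions that $\gcd(r,q-1)=1$ and that $g(x):=x^{r}(x+a)^{q-1}$ permutes $\mu_\ell=\{\zeta\in\fqe^{*}:\zeta^{\ell}=1\}$. The necessity of $\gcd(r,q-1)=1$ is immediate. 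If $(-a)^{\ell}=1$ then $-a\in\mu_\ell$ and $g(-a)=0\notin\mu_\ell$, so $g$ is not even a self-map of $\mu_\ell$ (equivalently $f$ has a repeated root); when $(-a)^{\ell}\neq1$ one checks $g(\zeta)^{\ell}=\zeta^{r\ell}(\zeta+a)^{(q-1)\ell}=(\zeta+a)^{q^{e}-1}=1$, so $g$ does map $\mu_\ell$ into itself and bijectivity is the same as injectivity. This disposes of the hypotheses on $a$ and on $\gcd(r,q-1)$ in both parts, and leaves the question of which exponents $r$ make $g$ injective on $\mu_\ell$.

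For sufficiency I would treat both parts at once, observing that the listed exponents all satisfy $r\equiv1\pmod{\ell}$ (for part (i) this is the stated congruence; for part (ii), $r\le\ell$ forces $r=1$, which also makes $\gcd(r,q-1)=1$ automatic). On $\mu_\ell$ we then have $\zeta^{r}=\zeta$, so it suffices to show $\zeta\mapsto\zeta(\zeta+a)^{q-1}$ is injective. Assuming $g(\zeta)=g(\eta)$ with $\zeta\neq\eta$ (legitimately clearing denominators, since $-a\notin\mu_\ell$) and using that $x\mapsto x^{q}$ is additive, one obtains after factoring out $(\zeta-\eta)$ the identity
\[
\left(\frac{\zeta-\eta}{\zeta+a}\right)^{q-1}=-\frac{a}{\eta}.
\]
The left-hand side is a $(q-1)$st power, hence lies in $\mu_\ell$, whereas $(-a/\eta)^{\ell}=(-a)^{\ell}\neq1$; this contradiction shows $g$ is injective. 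Notice the argument is uniform in $e$, so the ``if'' direction of both parts follows from the reduction above.

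The substance of the proposition is therefore the converse: a permutation forces $r\equiv1\pmod{\ell}$. For $e=2$ this is the more tractable case. Here $\zeta^{q}=\zeta^{-1}$ on $\mu_{q+1}$, so $g(\zeta)=\zeta^{\,r-1}\,\frac{a^{q}\zeta+1}{\zeta+a}$, a power map times a fractional linear map whose matrix $\left(\begin{smallmatrix}a^{q}&1\\1&a\end{smallmatrix}\right)$ is nonsingular exactly when $a^{q+1}\neq1$. I would transport the problem through the standard identification of $\mu_{q+1}$ with $\mathbb{P}^{1}(\fq)$, under which $\zeta\mapsto\zeta^{\,r-1}$ becomes a R\'edei function of degree $r-1$ modulo $q+1$, and argue that its product with a nonsingular M\"obius factor can be a permutation only when the power map is already trivial on $\mu_{q+1}$, i.e.\ when $(q+1)\mid(r-1)$.

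The principal obstacle is the converse for $e=3$, where $\zeta,\zeta^{q},\zeta^{q^{2}}$ are three genuinely independent conjugates tied only by $\zeta^{1+q+q^{2}}=1$, so there is no reduction to a single M\"obius map. One must rule out every $r$ with $1<r\le\ell$ and $\gcd(r,q-1)=1$, and I expect this to require quantitative control of $g$ rather than a structural shortcut. The route I would pursue is Hermite-type: a bijection of $\mu_\ell$ satisfies $\sum_{\zeta\in\mu_\ell}g(\zeta)^{j}=0$ for $1\le j\le\ell-1$; expanding $(\zeta+a)^{(q-1)j}$ by the binomial theorem and collapsing via $\sum_{\zeta\in\mu_\ell}\zeta^{m}=\ell\,[\ell\mid m]$ turns each such vanishing into a congruence among binomial coefficients weighted by powers of $a$, and the goal is to show these congruences are simultaneously solvable only when $r\equiv1\pmod{\ell}$. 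Carrying out this binomial bookkeeping, and pinning down exactly where the hypothesis that $q$ is odd is needed to exclude the remaining exponents, is where the real work of part (ii) lies; an alternative would be to bound the fibers of $g$ directly by analyzing the conjugate relations over $\fqc$ and exhibiting a collision whenever $r\not\equiv1\pmod{\ell}$.
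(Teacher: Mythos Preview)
This proposition is quoted from Hou and Liu, and the paper gives no proof of its own for it; what the paper does supply is an independent proof of the broader Proposition~\ref{q234final} (covering $e\in\{2,3,4\}$ for all $q$, and all $r$) via the machinery of Sections~\ref{suff}--\ref{special}. So the right comparison is between your outline and those sections.

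Your sufficiency argument is correct and genuinely different from the paper's. The paper obtains sufficiency by recognising $f$ as $L(x^{r})$ with $L(x)=x^{q}+ax$ a linearized permutation (Lemma~\ref{lpb1}, Theorem~\ref{Compos}, Corollary~\ref{part}). You instead pass through the AGW reduction to $g(\zeta)=\zeta(\zeta+a)^{q-1}$ on $\mu_{\ell}$ and prove injectivity directly via the identity $\bigl((\zeta-\eta)/(\zeta+a)\bigr)^{q-1}=-a/\eta$, using that any $(q-1)$st power lies in $\mu_{\ell}$ while $-a/\eta$ does not. This is slick and uniform in $e$; the paper's route, by contrast, has the advantage of simultaneously producing all the ``composition'' exponents $r$ listed in Table~\ref{rvalues}, not just $r\equiv1\pmod{\ell}$.

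The gap is on the necessity side. For $e=2$ you correctly rewrite $g(\zeta)=\zeta^{\,r-1}\cdot\dfrac{a^{q}\zeta+1}{\zeta+a}$ and then assert that this product permutes $\mu_{q+1}$ only when $(q+1)\mid(r-1)$. That assertion is precisely the nontrivial content of Hou's theorem and does not follow from the mere nonsingularity of the M\"obius factor: a nontrivial power map times a nontrivial fractional linear map can perfectly well permute a cyclic group, so one must actually exploit the specific shape of this factor (or some equivalent input). For $e=3$ you candidly leave the necessity as a programme. The paper's method here is concrete and is exactly the kind of Hermite bookkeeping you gesture at: it uses the criterion of Proposition~\ref{MPWbin} and, for each residue class of $r\pmod{\ell}$ to be excluded, exhibits a specific witness $N$ (namely $N=q^{e-1}-1$ in Proposition~\ref{rth}, $N=2q^{e-1}-2$ in Proposition~\ref{sth}, and $N=2q^{e-1}-q^{e-2}-1$ in Proposition~\ref{generalLemma}) for which $\sum_{A\in S_{N}}\binom{N}{A}a^{N-A}\neq0$ is verified via Lucas' theorem and Lemma~\ref{binom}. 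Choosing these $N$ and carrying out the $q$-adic digit analysis is the ``real work'' you allude to; without it your converse remains a sketch.
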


Li et al.~\cite[Theorem 3.1]{LiQuChFFA2017} showed a weaker version of~\Cref{liliu}(i) by considering $r \leq \ell$, which is the same restriction that appears in (ii). In both cases, the authors restricted $r$ to such values due to the following result. 

\begin{prop}\cite[Lemma 2.2]{LiQuChFFA2017}\label{rootsCor} Let $f(x) = x^{r}(x^{(q^e-1)/\ell}+a)\in\mathbb F_{q^e}[x]$ with   $a\neq 0$ and $\ell\mid q^e-1$. If $f(x)$ permutes $\mathbb F_{q^e}$ and $\gcd(r+\ell,(q^e-1)/\ell)=1$, then $x^{r+\ell}(x^{(q^e-1)/\ell}+a)$ also permutes $\mathbb F_{q^e}$.
\end{prop}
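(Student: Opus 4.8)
The plan is to deploy the standard multiplicative reduction for polynomials of the shape $x^m h(x^s)$, in the form due to Park--Lee, Wan--Lidl, and Akbary--Ghioca--Wang. Write $s=(q^e-1)/\ell$ and set $h(y)=y+a$, so that the two binomials in the statement are $f(x)=x^r h(x^s)$ and $g(x)=x^{r+\ell}h(x^s)$ with the \emph{same} $h$. The criterion I would invoke reads: for $\ell s=q^e-1$, the polynomial $x^m h(x^s)$ permutes $\fqe$ if and only if $\gcd(m,s)=1$ and the induced map $\bar f_m\colon \mu_\ell\to\mu_\ell$, $\bar f_m(y)=y^m h(y)^s$, permutes $\mu_\ell$, the group of $\ell$-th roots of unity in $\fqe^*$. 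Here the point is that $x\mapsto x^s$ maps $\fqe^*$ onto $\mu_\ell$ with fibers equal to the cosets of the order-$s$ group $\mu_s$, and $\gcd(m,s)=1$ is exactly what makes $x\mapsto x^m$ injective on each such fiber.

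First I would dispose of well-definedness. Since $f$ permutes $\fqe$, its only root is $0$; because $x\mapsto x^s$ is onto $\mu_\ell$, this forces $-a\notin\mu_\ell$, i.e.\ $(-a)^\ell\neq 1$. Hence $h(y)=y+a\neq 0$ for every $y\in\mu_\ell$, so both $\bar f_r$ and $\bar f_{r+\ell}$ genuinely take values in $\mu_\ell$ and the criterion applies to each.

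The crux is an elementary observation: on $\mu_\ell$ one has $y^\ell=1$, so
\[
\bar f_{r+\ell}(y)=y^{r+\ell}h(y)^s=y^\ell\,y^r h(y)^s=y^r h(y)^s=\bar f_r(y)
\]
for all $y\in\mu_\ell$. Thus bumping the exponent by $\ell$ is invisible after reduction, and the two induced maps coincide on $\mu_\ell$. Applying the ``only if'' direction of the criterion to the permutation $f$ shows that $\bar f_r$ permutes $\mu_\ell$, whence so does $\bar f_{r+\ell}=\bar f_r$. Combining this with the hypothesis $\gcd(r+\ell,s)=1$ and invoking the ``if'' direction of the criterion for the exponent $r+\ell$ yields that $g(x)=x^{r+\ell}(x^s+a)$ permutes $\fqe$.

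I expect no genuine mathematical obstacle here; the work is organizational. The main care points are to state the reduction correctly (the fiber structure of $x\mapsto x^s$, and the role of $\gcd(m,s)=1$ in guaranteeing injectivity on fibers) and to verify the well-definedness of the reduced maps. Once that scaffolding is in place, the result is immediate, since the extra factor $x^\ell$ contributes only $y^\ell=1$ on $\mu_\ell$, while the hypothesis $\gcd(r+\ell,s)=1$ is precisely what is needed to lift ``$\bar f_{r+\ell}$ permutes $\mu_\ell$'' back to ``$g$ permutes $\fqe$.''
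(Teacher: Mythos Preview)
Your argument is correct. The paper does not supply its own proof of this proposition; it is quoted verbatim as \cite[Lemma~2.2]{LiQuChFFA2017} and used as an external input. Your approach via the Park--Lee/Wan--Lidl/Zieve multiplicative reduction (the criterion that $x^m h(x^s)$ permutes $\fqe$ iff $\gcd(m,s)=1$ and $y\mapsto y^m h(y)^s$ permutes $\mu_\ell$) is exactly the standard route, and is almost certainly how the cited reference proves it as well: the key observation that $y^{r+\ell}=y^r$ on $\mu_\ell$ makes the two induced maps coincide, and the hypothesis $\gcd(r+\ell,s)=1$ is precisely the remaining condition needed to lift back. Your check that $h$ is nonvanishing on $\mu_\ell$ (so that the reduced maps are well defined) is the only place any care is required, and you handled it correctly.
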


We note that~\Cref{liliu}(ii) together with~\Cref{rootsCor} may not produce all permutation binomials $x^r(x^{q-1}+a)$ over $\mathbb{F}_{q^3}$. For instance, the  permutation binomials of  this form with degree less than $26$ over $\mathbb{F}_{27}$ are $x(x^2+a)$ and $x^{23}(x^{2}+a)$, with $(-a)^{13}\neq 1$.  While the former one is the only permutation binomial given by~\Cref{liliu}(ii), the latter one is not given by~\Cref{rootsCor}. In fact, ~\Cref{rootsCor}  cannot be applied to $x(x^2+a)$ since $\gcd(r+\ell, (q^e-1)/\ell) = \gcd(14,2)=2$. Moreover, we observe that, in order to obtain $x^{23}(x^{2}+a)$ from~\Cref{rootsCor}, we would need $x^{10}(x^2+a)$ to permute $\mathbb{F}_{27}$, which is clearly not true. In particular, this shows that the converse of~\Cref{rootsCor} does not hold. One of our goals in this paper is to extend~\Cref{liliu}(ii) to include the values of $r$ such that $r > \ell$.

We show several existence and nonexistence results of permutation binomials of the form $x^r(x^{q-1} + a)$ over $\mathbb F_{q^e}$ for  $e\geq 2$. As a consequence, we completely characterize such permutation binomials over $\mathbb F_{q^2}, \mathbb F_{q^3}, \mathbb F_{q^4}$, $\mathbb F_{p^5}$, and $\mathbb F_{p^6}$, where $p$ is an odd prime. In particular, we extend~\Cref{liliu}. Even though the case  $\mathbb F_{q^2}$  has been resolved by Hou, we include it in our characterization, since we state our result in terms of $\ell$, 
which allows us to have a statement similar to~\Cref{liliu}(i) that holds for other values of $e$. Below we summarize the  results. We denote the (nonnegative) reduction of $r$ modulo $\ell$ by $r \pmod{\ell}$.

\begin{theorem}\label{e2345}
Let $f(x) = x^r(x^{q-1} + a) \in \mathbb{F}_{q^e}[x]$ with $2\leq e \leq 6$ and  $a\neq 0$,  and  let
$\ell=q^{e-1}+\cdots+q+1$.
\begin{enumerate}[\normalfont(i)]
    \item When $e=2,3,4$, $f(x)$ permutes $\mathbb{F}_{q^e}$ if and only if $(-a)^{\ell} \ne 1$,  $\gcd(r, q-1) = 1$, and $r \pmod{\ell}\in\{1, \ell-q\}$.
    \item When $e=5$ and $q$ is an odd prime, $f(x)$ permutes $\mathbb{F}_{q^5}$ if and only if $(-a)^{\ell} \ne 1$,  $\gcd(r, q-1) = 1$, and $r\pmod{\ell}\in\{1, \ell-q, q^3+1,q^4+q^2+1\}$.
    \item When $e=6$ and $q$ is an odd prime, $f(x)$ permutes $\mathbb{F}_{q^6}$ if and only if $(-a)^{\ell} \ne 1$,  $\gcd(r, q-1) = 1$, and $r\pmod{\ell}\in\{1, \ell-q\}$.
\end{enumerate}
\end{theorem}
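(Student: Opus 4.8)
The plan is to attack \Cref{e2345} through the index (multiplicative) reduction of Akbary--Ghioca--Wang. Writing $f(x)=x^r h(x^{q-1})$ with $h(y)=y+a$ and noting that $(q^e-1)/\ell=q-1$, that criterion says $f$ permutes $\mathbb F_{q^e}$ if and only if $\gcd(r,q-1)=1$ and the auxiliary map $g(x)=x^r(x+a)^{q-1}$ permutes the group $\mu_\ell=\{x\in\mathbb F_{q^e}^*:x^\ell=1\}$ of $\ell$-th roots of unity (equivalently, the norm-one subgroup of $\mathbb F_{q^e}/\mathbb F_q$). First I would record that $g$ sends $\mu_\ell$ into itself precisely because $(-a)^\ell\neq 1$: for $x\in\mu_\ell$ one has $g(x)^\ell=x^{r\ell}(x+a)^{q^e-1}=1$ as soon as $x+a\neq 0$, and $-a\notin\mu_\ell$ is exactly the hypothesis (which is also the necessary condition already noted in the introduction). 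The decisive observation is that on $\mu_\ell$ we have $x^r=x^{r\bmod\ell}$, so whether $g$ permutes $\mu_\ell$ depends on $r$ only through $\rho:=r\bmod\ell$. Hence $f$ permutes $\mathbb F_{q^e}$ if and only if $\gcd(r,q-1)=1$ and $\rho$ lies in the admissible set $R_e:=\{\rho: g_\rho \text{ permutes } \mu_\ell\}$, where $g_\rho(x)=x^{\rho}(x+a)^{q-1}$. Proving the theorem is exactly the computation of $R_e$, and this reduction is what removes the restriction $r\le\ell$ present in \Cref{liliu} and produces the clean ``$r\pmod\ell\in\{\dots\}$'' form.

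For the sufficiency half (that $R_e$ contains the listed residues) I would verify each claimed $\rho$ directly. When $\rho=1$ and $e=2$, using $x^q=x^{-1}$ on $\mu_{q+1}$ collapses $g_1$ to the Möbius transformation $x\mapsto (a^qx+1)/(x+a)$, whose determinant $a^{q+1}-1$ is nonzero exactly because $(-a)^{q+1}\neq 1$; a nondegenerate Möbius map is injective on $\mathbb P^1$, hence bijective on the finite invariant set $\mu_\ell$. For $e\geq 3$ the Frobenius no longer inverts elements of $\mu_\ell$, so instead I would prove injectivity of $g_\rho$ by hand: from $g_\rho(x)=g_\rho(y)$ rewrite $(x/y)^\rho=\bigl((y+a)/(x+a)\bigr)^{q-1}$, clear denominators, and use the additivity $(u-v)^q=u^q-v^q$ together with the norm relations $x^\ell=y^\ell=1$ to force $x=y$. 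I expect $\rho=1$ and $\rho\equiv -q\pmod\ell$ (that is, $\rho=\ell-q$) to be the ``universal'' members of $R_e$ for every $e$, and I would look for an involution of $\mu_\ell$---inversion $x\mapsto x^{-1}$ combined with recentering $a\mapsto a^{-1}$---that conjugates $g_1$ to $g_{\ell-q}$, so that the second family follows from the first rather than from a separate computation; the extra residues for $e=5$ would be handled by analogous explicit verifications.

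The necessity half (that $R_e$ contains nothing more) is where the real difficulty lies, and it is the step I expect to be the main obstacle. Here I would, for each $\rho$ not on the list, manufacture a collision $g_\rho(x)=g_\rho(y)$ with $x\neq y$ in $\mu_\ell$; equivalently, interpret the collision equation as an affine curve and show it has $\mu_\ell$-points off the diagonal. For $e=3,4$ the list is short and I would expect a uniform argument (or a short finite check over $\rho$) to eliminate the remaining residues. For $e=5,6$ the hypothesis that $q$ is prime is essential: it removes the intermediate subfields of $\mathbb F_{q^e}/\mathbb F_q$ and keeps the auxiliary polynomials attached to each $\rho$ irreducible and of controlled degree, so that a point-count (a Weil-type bound, or an explicit factorization) can certify the absence of extra collisions. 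Assembling the three ingredients---the reduction, the sufficiency, and the necessity---pins down $R_e$ for each $2\le e\le 6$ and yields the stated lists; as a consistency check, for $e=2$ one has $\ell-q=1$ so $R_2=\{1\}$, recovering \Cref{liliu}(i), while for $e=3$ the new residue $\ell-q=q^2+1$ explains permutation binomials such as $x^{23}(x^2+a)$ over $\mathbb F_{27}$ that \Cref{liliu}(ii) and \Cref{rootsCor} miss.
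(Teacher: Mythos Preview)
Your reduction via the AGW criterion to the action of $g_\rho(x)=x^\rho(x+a)^{q-1}$ on $\mu_\ell$ is valid and does make the dependence on $r\bmod\ell$ transparent, but the paper never works on $\mu_\ell$ at all. For sufficiency, it observes that for each listed residue $\rho$ one has $f(x)\equiv L(x^r)\pmod{x^{q^e}-x}$ with $L(y)=y^{q^h}+ay$ a linearized binomial (Theorem~\ref{Compos} and Corollary~\ref{part}); then $L$ permutes iff it has no nonzero root, which is exactly $(-a)^\ell\neq 1$, and $\gcd(r,q^e-1)=1$ follows from $\gcd(r,q-1)=1$ for these particular $r$. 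This is cleaner and more uniform than the ad hoc injectivity verifications you sketch, and it delivers for free the symmetry $\rho\leftrightarrow\ell-q$ (via $h\leftrightarrow e-h$) that you were hoping to discover by conjugating with an involution of $\mu_\ell$.

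The necessity half is where your plan has a genuine gap. There are roughly $q^{e-1}$ residues $\rho$ to exclude, so a ``short finite check over $\rho$'' is not available; and you give no mechanism for making a collision or Weil-bound argument uniform in both $\rho$ and $q$ (the genus of the collision curve depends on $\rho$, and small $q$ would need separate treatment). The paper's method here is entirely different: it uses a Hermite-type power-sum criterion (Proposition~\ref{MPWbin}), which reduces nonpermutation to exhibiting a single $N\in\{1,\dots,q^e-2\}$ with $\sum_{A\in S_N}\binom{N}{A}a^{N-A}\neq 0$. For successive families of bad $\rho$ (first $\rho\not\equiv 1\pmod q$ in Proposition~\ref{rth}, then $\rho=hq+1$ with $q+1\nmid h$ when $e$ is even in Proposition~\ref{qplusone4}, then $\rho=hq+1$ with $p\nmid h$ in Proposition~\ref{sth}, etc.) the paper chooses an explicit $N$ and evaluates the binomial sum via Lucas' Theorem; the remaining residues for each $e\le 6$ are then eliminated one parametric family at a time in Propositions~\ref{q234final}--\ref{p6}. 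Finally, your explanation of why $q$ must be prime for $e=5,6$ is incorrect: $\mathbb F_{q^5}/\mathbb F_q$ has no proper intermediate fields for any $q$, and $\mathbb F_{q^6}/\mathbb F_q$ always has $\mathbb F_{q^2}$ and $\mathbb F_{q^3}$. In the paper the primality enters because Proposition~\ref{sth} only forces $p\mid h$ in $r\equiv hq+1$; when $q=p$ this upgrades the conclusion to $r\equiv h'q^2+1$, which is exactly the reduction the subsequent case analyses in Propositions~\ref{p5} and~\ref{p6} need.
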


In the next result we construct  permutation binomials $f(x)$ for arbitrary $e$.

\begin{theorem}\label{geral}
Let $f(x) = x^r(x^{q-1} + a)\in\mathbb{F}_{q^e}[x]$ with $e\geq 2$ and $a\neq 0$, and let $\ell=q^{e-1}+\cdots +q+1$. Then $f(x)$ permutes $\fqe$ and is the composition of a linearized binomial and a monomial if and only if $(-a)^\ell \neq 1$ and $r = s\ell + \sum_{i=0}^{k-1}q^{hi}  \pmod{q^e-1}$, where $\gcd(h, e) = 1$, $k \pmod{e} = h^{-1}$, $s$ is a positive integer, and  $\gcd(r, q-1) = 1$. 
\end{theorem}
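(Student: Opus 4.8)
My plan is to show that, up to a harmless normalization, the only way $f(x)=x^r(x^{q-1}+a)$ can be a composition of a linearized binomial and a monomial is $f=B_0\circ M$, where $B_0(w)=w^{q^h}+aw$ is a linearized binomial and $M(x)=x^r$. First I would normalize an arbitrary such composition: writing a linearized binomial as $B(y)=\alpha y^{q^u}+\beta y^{q^v}$ with $u>v$ and absorbing the Frobenius $y\mapsto y^{q^v}$ into the monomial, any composition in the order (binomial)$\circ$(monomial) reduces to $\alpha x^{mq^h}+\beta x^m$ with $h=u-v\geq 1$ (after renaming the monomial exponent). Comparing this with $f(x)=x^{r+q-1}+ax^r$ as functions on $\fqe$ forces the two (exponent, coefficient) pairs to match modulo $x^{q^e}-x$; this gives $\{\alpha,\beta\}=\{1,a\}$ together with a single congruence on the exponents. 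After observing that the assignment $\alpha=a$, $\beta=1$ merely replaces $h$ by $e-h$, I may take $\alpha=1$, $\beta=a$, $m\equiv r$, so that the whole problem collapses to the linear congruence $r(q^h-1)\equiv q-1\pmod{q^e-1}$.

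For the existence direction I would run this backwards. With $B_0(w)=w^{q^h}+aw$ and $M(x)=x^r$, the identity $f=B_0\circ M$ is exactly $r(q^h-1)\equiv q-1\pmod{q^e-1}$, which I verify by splitting $r=s\ell+T$ with $T=\sum_{i=0}^{k-1}q^{hi}$: the term $s\ell(q^h-1)$ vanishes modulo $q^e-1$ because $\ell(q^h-1)=(q^e-1)(1+q+\cdots+q^{h-1})$, while $T(q^h-1)=q^{hk}-1\equiv q-1$ since $hk\equiv 1\pmod e$ yields $q^{hk}\equiv q$. It then remains to see that $f$ permutes $\fqe$. Because $B_0$ is linearized, $B_0(w)=0$ forces $w=0$ or $w^{q^h-1}=-a$; as $\gcd(h,e)=1$ the homomorphism $w\mapsto w^{q^h-1}$ on $\fqe^*$ has image the unique subgroup of order $\ell$, so $B_0$ is bijective precisely when $(-a)^\ell\neq 1$. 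Since $f=B_0\circ M$ with $B_0$ bijective, $f$ permutes iff $M=x^r$ does, that is, iff $\gcd(r,q^e-1)=1$.

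The step I expect to be the crux is reconciling the stated hypothesis $\gcd(r,q-1)=1$ with the genuine requirement $\gcd(r,q^e-1)=1$. Here the key tool is $\gcd(q^a-1,q^b-1)=q^{\gcd(a,b)}-1$. Since $hk\equiv 1\pmod e$ forces $\gcd(hk,e)=1$, we get $\gcd(q^{hk}-1,q^e-1)=q-1$; as $r\equiv T\pmod\ell$ and $q^e-1=(q-1)\ell$, any prime dividing both $r$ and $\ell$ would divide $\gcd(T,\ell)$, hence $q^{hk}-1$ and $q^e-1$, hence $q-1$, contradicting $\gcd(r,q-1)=1$. Thus $\gcd(r,\ell)=1$ and so $\gcd(r,q^e-1)=1$, completing the existence direction. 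For necessity the same identity shows that solvability of $r(q^h-1)\equiv q-1\pmod{q^e-1}$ requires $q^{\gcd(h,e)}-1\mid q-1$, forcing $\gcd(h,e)=1$; the congruence then has exactly $q-1$ solutions $r\equiv T+j\ell\pmod{q^e-1}$, which is the asserted form $r=s\ell+\sum_{i=0}^{k-1}q^{hi}\pmod{q^e-1}$ with $k\equiv h^{-1}\pmod e$, while bijectivity of $B_0$ and $M$ returns $(-a)^\ell\neq 1$ and $\gcd(r,q-1)=1$. The remaining delicate point is the very first normalization: I must confirm that a composition taken in the order (monomial)$\circ$(binomial), namely $(\alpha x^{q^u}+\beta x^{q^v})^m$, can equal a genuine binomial only when it can be rewritten in the form $B_0\circ M$, so that no further families of solutions are lost.
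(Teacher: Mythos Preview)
Your proposal is correct and follows essentially the same route as the paper: normalize an arbitrary composition to $f\equiv L(x^r)$ with $L(x)=x^{q^h}+ax$ (the paper's Lemma~3.2), solve the congruence $r(q^h-1)\equiv q-1\pmod{q^e-1}$ (Lemma~3.3), use the permutation criterion for $L$ (Lemma~3.1), and then upgrade $\gcd(r,q-1)=1$ to $\gcd(r,q^e-1)=1$. The only visible differences are cosmetic---for the last step the paper reads off a B\'ezout relation $r\sum_{i=0}^{h-1}q^i-t\ell=1$ directly from the congruence rather than going through $\gcd(q^{hk}-1,q^e-1)=q-1$, and the paper simply interprets ``composition of a linearized binomial and a monomial'' as $\bar L(x^s)$, so your flagged concern about the order (monomial)$\circ$(binomial) is not treated there either.
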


\Cref{geral} shows that, for any $q^e$ with $e\geq 2$, there is always a $4$-tuple $(q^e,r,1,a)$ in $S$. So this yields an infinite family in $S$ that extends Hou's second family described in~\Cref{liliu}(i).  Moreover, we have run a computer search for $q^e < 10^8$ that leads us to conjecture that our family is the only one  in $S$ for which $t=1$.

\begin{conjecture}\label{conj}
Let $f(x) = x^r(x^{q-1} + a)\in\mathbb{F}_{q^e}[x]$ with $e\geq 2$ and $a\neq 0$,  and let
$\ell=q^{e-1}+\cdots +q+1$.
Then $f(x)$ permutes $\fqe$ if and only if $f(x)$ is congruent to the composition of a linearized binomial $L(x)=x^{q^h}+ax$ and the monomial $x^r$ modulo $x^{q^e}-x$, where  $\left(-a\right)^{\ell}\neq 1$   and $\gcd(r,q-1)=1$. \end{conjecture}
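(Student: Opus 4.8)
The plan is to treat the two assertions in the statement separately — that $f$ is a composition of a linearized binomial with a monomial, and that $f$ permutes — and then reconcile the resulting divisibility constraints. Writing $L(x)=x^{q^h}+ax$ and $M(x)=x^r$, a direct reduction modulo $x^{q^e}-x$ gives $L(M(x))=x^{rq^h}+ax^r$, so $f(x)=x^{r+q-1}+ax^r$ agrees with $L\circ M$ as a function on $\fqe$ exactly when
\begin{equation*}
rq^h\equiv r+q-1\pmod{q^e-1},\qquad\text{equivalently}\qquad r(q^h-1)\equiv q-1\pmod{q^e-1}. \tag{$\star$}
\end{equation*}
First I would check that this is essentially the only way for $f$ to arise as such a composition: matching the two monomials of an arbitrary $\alpha x^{q^u}+\beta x^{q^v}$ evaluated at $x^m$ against the two terms of $f$ forces, after relabeling, the linear coefficient to be $a$ and reduces the exponent bookkeeping precisely to $(\star)$ with inner monomial $x^r$. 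Since $q-1\not\equiv 0\pmod{q^e-1}$ the two terms of $f$ are genuinely distinct, so no degenerate collapse to a single monomial occurs.

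Next I would solve $(\star)$. The congruence is solvable in $r$ iff $\gcd(q^h-1,q^e-1)\mid q-1$, and since $\gcd(q^h-1,q^e-1)=q^{\gcd(h,e)}-1$, this happens iff $\gcd(h,e)=1$ (because $q^d-1\mid q-1$ together with $q^d-1\geq q-1$ forces $d=1$); in that case the gcd equals $q-1$ and the solution set is a single residue class modulo $(q^e-1)/(q-1)=\ell$. To pin down that class I would exhibit the explicit solution $R=\sum_{i=0}^{k-1}q^{hi}$ with $k\equiv h^{-1}\pmod e$: a telescoping computation gives $R(q^h-1)=q^{hk}-1$, and $hk\equiv 1\pmod e$ yields $q^{hk}\equiv q\pmod{q^e-1}$, so $R(q^h-1)\equiv q-1$ as required. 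Hence $(\star)$ holds iff $r\equiv R\pmod\ell$, i.e. $r=s\ell+R\pmod{q^e-1}$, which is the stated parametrization; the integer $s$ may be taken positive since $s$ and $s+(q-1)$ produce the same residue modulo $q^e-1$.

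For the permutation condition I would use that, on the finite set $\fqe$, the composition $L\circ M$ is a bijection iff both $L$ and $M$ are. The monomial $x^r$ permutes iff $\gcd(r,q^e-1)=1$, while $L(x)=x^{q^h}+ax$ permutes iff it has no nonzero root; writing $x^{q^h-1}=-a$ and noting that the image of $x\mapsto x^{q^h-1}$ on $\fqe^*$ is the subgroup of order $(q^e-1)/\gcd(q^h-1,q^e-1)=\ell$, solvability fails exactly when $(-a)^\ell=1$, so $L$ permutes iff $(-a)^\ell\neq 1$. The final step is to reconcile $\gcd(r,q^e-1)=1$ with the weaker-looking $\gcd(r,q-1)=1$ appearing in the statement: dividing $(\star)$ by $q-1$ gives $R\cdot\frac{q^h-1}{q-1}\equiv 1\pmod\ell$, whence $\gcd(R,\ell)=\gcd(r,\ell)=1$; since $q^e-1=(q-1)\ell$, having $\gcd(r,\ell)=1$ automatically upgrades $\gcd(r,q-1)=1$ to $\gcd(r,q^e-1)=1$. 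Combining the three ingredients yields the claimed equivalence.

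I expect the main obstacle to be the converse bookkeeping in the first step — verifying that no other pairing of a linearized binomial with a monomial (different coefficients, reversed term order, or a nontrivial inner power $q^v$) produces $f$ except through $(\star)$ — together with keeping the reductions modulo $q^e-1$ versus modulo $\ell$ straight, so that the enumeration by $s$ and the automatic coprimality $\gcd(R,\ell)=1$ both come out clean.
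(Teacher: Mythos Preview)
There is a genuine gap: the statement you are attempting to prove is a \emph{conjecture}, not a theorem, and the paper does not prove it in general. What you have actually established is the paper's Theorem~\ref{geral} (equivalently Theorem~\ref{Compos}): namely, that \emph{among} the $f(x)=x^r(x^{q-1}+a)$ which are compositions $L\circ M$, the permutation ones are exactly those with $(-a)^\ell\neq 1$ and $\gcd(r,q-1)=1$, and you have correctly parametrized the $r$'s for which such a composition exists. All of your ingredients~(1)--(3) are fine for that purpose, and they match the paper's Lemmas~\ref{lpb1}--\ref{Congr}.

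What is missing is the entire content of the forward implication of the conjecture: if $f$ permutes $\fqe$, why must $f$ be a composition $L(x^r)$ at all? Your argument begins by \emph{assuming} $f\equiv L\circ M$ and then analyzes when that composition permutes; nowhere do you rule out a permuting $f$ whose $r$ does \emph{not} satisfy $(\star)$ for any $h$. The paper handles this hard direction only for $e\in\{2,3,4\}$ (and $e\in\{5,6\}$ with $q$ prime) via delicate nonexistence arguments in Section~\ref{nece}: a Hermite-type criterion (Proposition~\ref{MPWbin}), careful choices of $N$, Lucas' Theorem computations, and the reduction Lemma~\ref{rlargerindex}. For general $e$ this remains open and is supported only by computer search for $q^e<10^8$. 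Your ``main obstacle'' paragraph worries about uniqueness of the composition representation, but that is a side issue; the real obstacle is proving existence of such a representation for every permuting $f$, and your proposal contains no mechanism for that.
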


\Cref{e2345,geral} 
show that this conjecture is true when $e\in\{2,3,4\}$ for any $q$, and when $e\in\{5,6\}$ for any odd prime $q$. One direction of~\Cref{e2345} is obtained through a series of nonexistence results. In addition, these results collectively restrict the possible values of $r$  for the remaining values of $q^e$.

We organize this paper as follows.  In~\Cref{premi} we present some tools that we use in~\Cref{nece}, including a permutation criterion and some technical results involving binomial coefficients. The goal of~\Cref{suff} is to show~\Cref{geral}. In~\Cref{nece} we present several nonexistence results.
Finally, in~\Cref{special} we apply some results from ~\Cref{suff,nece} to show ~\Cref{e2345} as a combination of Propositions ~\ref{q234final} to~\ref{p6}. We end the paper with a discussion on~\Cref{conj}.

\section{Preliminaries}\label{premi}

We start this section by defining the index of a polynomial $f(x)\in\mathbb F_{q^e}[x]$ of degree at most $q^e-1$.  Write 
$$f(x) = \alpha(x^d+a_{d-i_1}x^{d-i_1}+\cdots + a_{d-i_k}x^{d-i_k})+\beta,$$
where $\alpha,a_{d-i_j}\neq 0$ for $j=1,\ldots,k$. We suppose that $k\geq 1$ and $d-i_k=r$.  The integer 
$$\ell = \dfrac{q^e-1}{\gcd(d-r,d-r-i_1,\dots,d-r-i_{k-1},q^e-1)}$$
 is called the {\emph{index}} of $f(x)$. We note that when $r \leq q^e-q$ the index of $f(x)=x^r\left(x^{q-1}+a\right)\in\mathbb F_{q^e}[x]$ is $\ell=q^{e-1}+\cdots+q+1$.

Our results rely on  a variation of Hermite's criterion obtained by Masuda, Panario, and Wang, namely Theorem 1 in~\cite{MPW}. The authors specialized their criterion for polynomials to binomials. The result is immediate, and it appears as Corollary 2 without a proof. In both cases, they require the degree of the polynomial in $\mathbb{F}_{q^e}[x]$ to be less than $q^e-1$.  
We examined the proof of Theorem 1, and checked that this condition is not needed. Below we state their criterion for permutation binomials without the degree requirement.  

\begin{prop}\cite[Corollary 2]{MPW}\label{MPWbin}
Let $f(x) = x^r(x^s + a) \in \mathbb{F}_{q^e}[x]$ with   $q^e \geq 3$, $a \ne 0$, and $r,s\geq 1$. Then $f(x)$ permutes $\mathbb{F}_{q^e}$ if and only if 
\begin{equation}\label{sum}
    \sum_{A \in S_N} \binom{N}{A}a^{N-A} = 
    \begin{cases}
    0  & \text{ if }  N = 1, \dots, q^e-2 \\
    1 & \text{ if }  N = q^e-1,
    \end{cases}
\end{equation}
where
\begin{equation*}
    S_{N} = \bigg\{ A \in \mathbb{Z} \colon A = \frac{j(q^e-1) -rN}{s}, \text{ where } j \in \mathbb{Z}  \text{ and }  0 \leq A \leq N \bigg\}.
\end{equation*}
\end{prop}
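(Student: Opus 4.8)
The plan is to sidestep the coefficient/reduction form of Hermite's criterion used in \cite{MPW}---which is exactly where the hypothesis $\deg f < q^e-1$ enters---and instead argue directly through the \emph{power-sum} form, which is insensitive to the degree of $f$. Recall that a polynomial $f\in\mathbb{F}_{q^e}[x]$ permutes $\mathbb{F}_{q^e}$ if and only if $\sum_{c\in\mathbb{F}_{q^e}}f(c)^N=0$ for every $N$ with $1\le N\le q^e-2$ and $\sum_{c\in\mathbb{F}_{q^e}}f(c)^{q^e-1}=-1$. This formulation places no restriction on $\deg f$, since reducing $f$ modulo $x^{q^e}-x$ alters neither the values $f(c)$ nor the power sums $\sum_c f(c)^N$. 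So the first step is simply to invoke this criterion for our $f(x)=x^r(x^s+a)$ of arbitrary degree; the hypothesis $q^e\ge 3$ guarantees the range $1\le N\le q^e-2$ behaves as stated.

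Next I would expand the relevant power. Since $r,s\ge 1$, we have $f(x)^N=x^{rN}(x^s+a)^N=\sum_{A=0}^{N}\binom{N}{A}a^{N-A}x^{rN+sA}$, and summing over the field while interchanging the two finite sums gives $\sum_{c}f(c)^N=\sum_{A=0}^{N}\binom{N}{A}a^{N-A}\bigl(\sum_{c\in\mathbb{F}_{q^e}}c^{\,rN+sA}\bigr)$. The key evaluation is the classical fact that for an integer $m\ge 1$ one has $\sum_{c\in\mathbb{F}_{q^e}}c^m=-1$ when $(q^e-1)\mid m$ and $0$ otherwise. Because $r,N\ge 1$, every exponent obeys $rN+sA\ge r\ge 1$, so this evaluation always applies and no $0^0$ ambiguity arises. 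It then remains to match indices: the inner sum contributes $-1$ precisely for those $A\in\{0,1,\dots,N\}$ with $(q^e-1)\mid(rN+sA)$, and writing $rN+sA=j(q^e-1)$ and solving for $A$ identifies these as exactly the integers $A=(j(q^e-1)-rN)/s$ lying in $[0,N]$, i.e.\ the elements of $S_N$. Hence $\sum_{c}f(c)^N=-\sum_{A\in S_N}\binom{N}{A}a^{N-A}$, and feeding this into the two cases of the power-sum criterion yields $\sum_{A\in S_N}\binom{N}{A}a^{N-A}=0$ for $1\le N\le q^e-2$ and $=1$ for $N=q^e-1$, which is precisely \eqref{sum}.

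The only genuinely delicate point---and the one I expect to be the main obstacle---is justifying that the degree hypothesis of \cite[Theorem 1]{MPW} is dispensable. The resolution is conceptual rather than computational: the power-sum criterion is equivalent to Hermite's criterion for \emph{every} representing polynomial, not merely for the reduced one of degree $<q^e-1$, so working with the sums $\sum_c f(c)^N$ directly---rather than with the coefficients of $f(x)^N \bmod (x^{q^e}-x)$, where a large $\deg f$ would demand extra bookkeeping---removes the restriction automatically. The displayed binomial expansion and the evaluation $\sum_c c^m\in\{-1,0\}$ are both valid for all $r,s\ge 1$ with no reference to $\deg f$, so the identity holds unconditionally and \Cref{MPWbin} follows.
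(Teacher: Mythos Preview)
Your argument is correct. The power-sum form of Hermite's criterion is exactly the right tool here: it depends only on the values of $f$ and is therefore insensitive to whether $\deg f<q^e-1$, so the degree restriction from \cite{MPW} evaporates. Your expansion of $f(x)^N$, the evaluation $\sum_{c}c^{m}\in\{0,-1\}$ for $m\ge 1$, and the identification of the surviving indices $A$ with the set $S_N$ are all clean and accurate, and you correctly note that $rN+sA\ge 1$ avoids any $0^0$ issue.

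As for the comparison: the paper does not actually give a proof of this proposition. It is stated as a quotation of \cite[Corollary~2]{MPW}, accompanied only by the remark that the authors ``examined the proof of Theorem~1, and checked that this condition is not needed.'' So there is no argument in the paper to compare against; your write-up supplies precisely the details the paper leaves implicit. Your choice to work directly with $\sum_{c}f(c)^N$ rather than with coefficients of $f(x)^N\bmod(x^{q^e}-x)$ is the natural way to make the degree hypothesis disappear transparently, and it is presumably what the authors had in mind when they said the condition is not needed.
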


For convenience, we may write $S_{N,r}$, $A_j$ or $A_{j,r}$ to emphasize the parameters involved.  For binomials $f(x)=x^r(x^{q-1}+a)$  $\in\mathbb{F}_{q^e}[x]$ and $\ell = q^{e-1}+\cdots+q+1$, we have
\begin{equation}\label{SNr}S_N=\bigg\{ A_j \in \mathbb{Z} \colon A_j = j \ell -  \frac{rN}{q-1},   \text{ where } j \in \mathbb{Z}  \text{ and } 0 \leq A_j \leq N \bigg\}.\end{equation}
The next lemma provides the only two possible values for $|S_N|$  when $rN/(q-1)$ is an integer. The proof gives the exact conditions for each value to occur. Since we do not use these conditions, for simplicity, we opt to state the result without them.

\begin{lemma}\label{count}
Let $f(x) =  x^r(x^{q-1}+ a)\in \mathbb{F}_{q^e}[x]$ with $e\geq 2$ and $a\neq 0$, and let 
$\ell=q^{e-1}+\cdots+q+1$.
If  $q-1\mid rN$ for some $N\in\{1,\dots, q^e-1\}$, then $|S_{N}|=\left\lfloor N/\ell\right\rfloor$ or $\left\lfloor N/\ell\right\rfloor +1$. 
\end{lemma}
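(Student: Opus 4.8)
The plan is to understand $S_N$ as a set of integers of the form $A_j = j\ell - rN/(q-1)$ lying in the interval $[0,N]$, and to count them. The key observation is that, since $q-1 \mid rN$ by hypothesis, the quantity $c := rN/(q-1)$ is a fixed integer. Therefore $A_j$ is an integer for every integer $j$, and the set $S_N$ consists precisely of those integers $A_j = j\ell - c$ with $0 \le A_j \le N$. Equivalently, $|S_N|$ counts the integers $j$ satisfying $c \le j\ell \le N + c$, that is, $c/\ell \le j \le (N+c)/\ell$. The number of integers in a real interval $[\alpha,\beta]$ is either $\lfloor \beta \rfloor - \lceil \alpha \rceil + 1$ or, when the interval contains no integer, zero; in general it lies within $1$ of the length $\beta - \alpha$.

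First I would compute the length of this interval of admissible $j$: we have $(N+c)/\ell - c/\ell = N/\ell$. Thus the admissible values of $j$ range over an interval of length exactly $N/\ell$, and a standard fact about counting integers in a real interval of length $L$ gives that the count is either $\lfloor L \rfloor$ or $\lfloor L \rfloor + 1$ (it can never be $\lceil L\rceil+1$ or smaller than $\lfloor L \rfloor$ for a closed interval of length $L$ that is not a single point). Applying this with $L = N/\ell$ yields $|S_N| = \lfloor N/\ell \rfloor$ or $\lfloor N/\ell \rfloor + 1$, which is the claim. I would be slightly careful about the endpoints: because both endpoints $j = c/\ell$ and $j = (N+c)/\ell$ are included (the inequalities $0 \le A_j \le N$ are non-strict), the interval is closed, so the upper bound $\lfloor N/\ell\rfloor + 1$ is attained exactly when both endpoints happen to be integers, and the paper explicitly says it suppresses these exact conditions.

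The main obstacle, though minor, is the careful bookkeeping at the boundary. One must verify that a closed real interval of length $L$ contains at most $\lfloor L \rfloor + 1$ integers and at least $\lfloor L \rfloor$ integers. The upper bound is the nontrivial direction: if the interval $[\alpha, \alpha + L]$ contained $\lfloor L \rfloor + 2$ integers, then the difference between its largest and smallest integer would be at least $\lfloor L \rfloor + 1 > L$, contradicting that both lie in an interval of length $L$. The lower bound follows because any closed interval of length $L \ge 1$ must contain at least $\lfloor L\rfloor$ consecutive integers; when $L < 1$ the count is $0$ or $1$, which is still consistent with $\lfloor L\rfloor \in \{0\}$ and $\lfloor L\rfloor + 1 = 1$. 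Once this elementary counting fact is established, the lemma follows directly by substituting $L = N/\ell$, with no dependence on the specific arithmetic of $r$, $a$, or $e$ beyond the integrality of $c$.
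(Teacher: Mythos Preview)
Your argument is correct and is essentially the same as the paper's: both reduce $|S_N|$ to counting integers in a closed real interval of length $N/\ell$ and then invoke the elementary fact that such an interval contains $\lfloor N/\ell\rfloor$ or $\lfloor N/\ell\rfloor+1$ integers. The paper writes this out via an explicit three-case formula for $|[\lceil u\rceil,\lfloor u+v\rfloor]\cap\mathbb Z|$, while you package it as a general counting lemma; these are the same computation. One small correction to your parenthetical aside: the upper bound $\lfloor N/\ell\rfloor+1$ is \emph{not} attained only when both endpoints are integers---it is also attained when the left endpoint is an integer alone, or when the fractional parts of the two endpoints ``wrap around'' (i.e., $\{\alpha\}+\{L\}\ge 1$), exactly as in the paper's case analysis. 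This does not affect your proof of the lemma as stated.
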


\begin{proof}
By~\eqref{SNr}, we have  
\begin{align*}
    |S_{N}| &=  \left|\bigg\{ j \in \mathbb{Z} \colon 0 \leq j\ell-\frac{rN}{q-1}\leq N \bigg\}\right|\\
     &= \left|\bigg\{ j \in \mathbb{Z} \colon 
    \left\lceil\frac{rN}{\ell(q-1)}\right\rceil \leq j\leq  \left\lfloor\frac{rN}{\ell(q-1)} +\frac{N}{\ell}\right\rfloor \bigg\}\right|.
\end{align*}
The result follows from the fact that for $u,v \geq 0$ we have 
$$
\left|  \left[\lceil u\rceil, \lfloor u+v \rfloor\right] \cap \mathbb Z  \right|=
\begin{cases}
\left| [ u,  u + \lfloor v \rfloor] \cap \mathbb Z  \right| = \lfloor v \rfloor +1 &\text{if } u\in\mathbb Z\\
\left| [\lfloor u\rfloor + 1, \lfloor u \rfloor +\lfloor v \rfloor] \cap \mathbb Z  \right| = \lfloor v \rfloor &\text{if } u\not\in\mathbb Z, u- \lfloor u\rfloor + v - \lfloor v \rfloor < 1\\
\left| [\lfloor u\rfloor + 1, \lfloor u \rfloor +\lfloor v \rfloor +1] \cap \mathbb Z  \right| = \lfloor v \rfloor +1 &\text{if } u\not\in\mathbb Z, u- \lfloor u\rfloor + v - \lfloor v \rfloor \geq 1.
\end{cases}
$$
\end{proof}

Following the idea underlying~\Cref{rootsCor} with regard to reducing the possible values of $r$, we show that   it suffices to consider $r\in\{1,\dots,\ell\}$   to show the nonexistence of permutation binomials in a special circumstance. This result will play an important role in our proofs in~\Cref{nece}.

\begin{lemma} \label{rlargerindex}
Let $e\geq 2$, $a\in\fqe^*$, and  $\ell=q^{e-1}+\cdots+q+1$. Let $r$ and $s$ be positive integers such that 
$r\equiv s\pmod{\ell}$. Suppose that $x^s(x^{q-1} +a)$ does not permute $\mathbb{F}_{q^e}$. If $\sum_{A \in S_{N, s}}\binom{N}{A}a^{N-A} \ne 0$ for some  $N\in\{1,\dots,q^e-2\}$ such that $q-1\mid N$, then $x^r(x^{q-1} + a)$ does not permute $\fqe$.
\end{lemma}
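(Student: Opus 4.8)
The plan is to apply the permutation criterion of \Cref{MPWbin} to $x^r(x^{q-1}+a)$ directly, transferring the nonvanishing of the coefficient sum from $s$ to $r$. The key observation is that, under the two hypotheses $q-1\mid N$ and $r\equiv s\pmod{\ell}$, the index sets $S_{N,r}$ and $S_{N,s}$ from \eqref{SNr} coincide as subsets of $\mathbb Z$, so the two sums in \eqref{sum} are literally equal.

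First I would record that $q-1\mid N$ lets me write $N=(q-1)N'$ with $N'\in\mathbb Z$, so that $rN/(q-1)=rN'$ and $sN/(q-1)=sN'$ are both integers; consequently every $A_j=j\ell-rN'$ (and likewise for $s$) is an integer, and \eqref{SNr} becomes $S_{N,r}=\{\,j\ell-rN' : j\in\mathbb Z,\ 0\le j\ell-rN'\le N\,\}$, with the analogous description for $S_{N,s}$. Next, writing $r-s=m\ell$ with $m\in\mathbb Z$, I would substitute $rN'=sN'+m\ell N'$ to obtain $j\ell-rN'=(j-mN')\ell-sN'$. Since $N'$ is an integer, the reindexing $j\mapsto j'=j-mN'$ is a bijection of $\mathbb Z$ onto itself that carries the constraint $0\le j\ell-rN'\le N$ exactly to $0\le j'\ell-sN'\le N$ while preserving the value $A_{j,r}=A_{j',s}$. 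Hence $S_{N,r}=S_{N,s}$ as sets of integers.

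With the sets identified, the sums agree term by term, so $\sum_{A\in S_{N,r}}\binom{N}{A}a^{N-A}=\sum_{A\in S_{N,s}}\binom{N}{A}a^{N-A}\ne 0$ by hypothesis. Because $N\in\{1,\dots,q^e-2\}$, this nonzero value contradicts the first case of \eqref{sum}, and therefore \Cref{MPWbin} forces $x^r(x^{q-1}+a)$ not to permute $\fqe$.

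There is no deep obstacle here: the only place the hypotheses are genuinely needed is in making the reindexing integral, which is precisely what $q-1\mid N$ guarantees (so that $N'$, hence $mN'$, is an integer) together with $r\equiv s\pmod{\ell}$ (so that the shift is by a multiple of $\ell$, keeping the set fixed). I would also note that the standing assumption that $x^s(x^{q-1}+a)$ fails to permute is in fact already forced by the nonvanishing of its $N$-th sum via \Cref{MPWbin}; it is stated to clarify how the lemma is meant to be invoked in \Cref{nece}, namely that once a single admissible $N$ with $q-1\mid N$ is found witnessing that $x^s$ is not a permutation, the same $N$ witnesses nonpermutation for every $r$ in the residue class of $s$ modulo $\ell$.
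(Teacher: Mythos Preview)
Your proof is correct and is essentially the same as the paper's: both show $S_{N,r}=S_{N,s}$ by writing $r-s=m\ell$ and reindexing $j\mapsto j-mN/(q-1)$, then invoke \Cref{MPWbin}. Your additional remark that the hypothesis on $x^s(x^{q-1}+a)$ is already implied by the nonvanishing of the sum is a valid observation the paper does not make explicit.
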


\begin{proof}
Let  $A_{j,r}\in S_{N,r}$. Write $r=s+k\ell$,  where $k$ is an integer, and set $j' = j - kN/(q-1)$. Then 
\begin{align*}A_{j,r} &= j\ell - r\frac{N}{q-1} \\  
                & = \left(j' +  k\frac{N}{q-1}  \right)\ell - (s+k\ell)\frac{N}{q-1} \\ 
                & = j' \ell -  s\frac{N}{q-1} \\
                &= A_{j',s}  \in S_{N,s},
\end{align*}
so $S_{N,r}\subseteq S_{N,s}$. This also shows that $S_{N,s}\subseteq S_{N,r}$.
Therefore $S_{N,r}=S_{N,s}$. By~\Cref{MPWbin}, we conclude that $x^r(x^{q-1}+a)$
does not permute $\fqe$.
\end{proof}

The next two results concern identities with binomial coefficients. Our convention is that $\binom{n}{k}=0$
if $n<k$.

\begin{prop}[Lucas' Theorem] \label{Lucas}Let $n\geq k$ be positive integers and $p$ be a prime. If $n=n_mp^m+n_{m-1}p^{m-1}+\cdots+n_0$ and $k=k_mp^m+k_{m-1}p^{m-1}+\cdots+k_0$ are the $p$-adic expansions of $n$ and $k$, then \begin{equation}
\label{lucas}
\binom{n}{k}\equiv \binom{n_m}{k_m}\cdots\binom{n_0}{k_0} \pmod{p}.
\end{equation}
\end{prop}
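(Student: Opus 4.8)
The plan is to prove Lucas' Theorem through a generating-function argument in characteristic $p$, extracting the congruence from a polynomial identity in $\mathbb F_p[x]$. The starting point is the \emph{Freshman's dream}: since $p\mid\binom{p}{j}$ for $0<j<p$, the binomial theorem gives $(1+x)^p\equiv 1+x^p\pmod p$ in $\mathbb F_p[x]$, and a straightforward induction on $i$ then yields $(1+x)^{p^i}\equiv 1+x^{p^i}\pmod p$ for every $i\geq 0$. This Frobenius identity is the only ingredient that uses the primality of $p$ in an essential way.

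Next I would exploit the $p$-adic expansion $n=\sum_{i=0}^m n_ip^i$ to factor the left-hand side. Writing
\[
(1+x)^n=\prod_{i=0}^m\left((1+x)^{p^i}\right)^{n_i}\equiv\prod_{i=0}^m\left(1+x^{p^i}\right)^{n_i}\pmod p,
\]
and expanding each factor by the ordinary binomial theorem, one obtains
\[
\prod_{i=0}^m\left(1+x^{p^i}\right)^{n_i}=\prod_{i=0}^m\sum_{k_i=0}^{n_i}\binom{n_i}{k_i}x^{k_ip^i}.
\]
The remaining task is to compare the coefficient of $x^k$ on both sides, where $k=\sum_{i=0}^m k_ip^i$ is the $p$-adic expansion of $k$.

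The crucial observation is that every exponent appearing on the right-hand side has the form $\sum_{i=0}^m k_ip^i$ with digits $0\leq k_i\leq n_i\leq p-1$, and such base-$p$ representations are unique. Consequently the monomial $x^k$ arises from exactly one choice of the indices $k_i$, namely the digits of $k$, so its coefficient on the right is $\prod_{i=0}^m\binom{n_i}{k_i}$, while on the left the binomial theorem gives $\binom{n}{k}$; equating the two produces the asserted congruence. When some digit $k_i$ of $k$ exceeds the corresponding $n_i$, the factor $\binom{n_i}{k_i}$ vanishes by our convention $\binom{n}{k}=0$ for $n<k$, and the product is $0$, matching the fact that the monomial $x^k$ then does not occur on the right. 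The one delicate point I would take care to state explicitly is precisely this uniqueness of the base-$p$ expansion, since it is what prevents the exponents from colliding and guarantees that coefficient extraction selects a single product; the rest is routine manipulation of the binomial theorem together with the Frobenius identity established in the first step.
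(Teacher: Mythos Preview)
Your generating-function argument is correct and is in fact the standard textbook proof of Lucas' Theorem. Note, however, that the paper does not supply its own proof of this proposition: it is quoted as a classical result and used as a tool, so there is no argument in the paper to compare yours against.
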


When $q$ is a power of $p$, the identity~\eqref{lucas} also holds with  the $q$-adic  expansions of $n$ and $k$, namely, $n=n_mq^m+n_{m-1}q^{m-1}+\cdots+n_0$ and $k=k_mq^m+k_{m-1}q^{m-1}+\cdots+k_0$.

\begin{lemma} \label{binom} Let $q=p^m$, where $p$ is prime, and let $k$ be an integer such that $0\leq k \leq q-1$. Then
\begin{enumerate}[\normalfont(i)]
\item $\displaystyle\binom{q-1}{k}\not\equiv 0 \pmod{p}$,
\item $\displaystyle\binom{q-2}{k} \equiv 0 \pmod{p}$ if and only if $k \equiv p-1 \pmod{p}.$
\end{enumerate}
\end{lemma}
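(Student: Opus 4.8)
The plan is to prove both statements of \Cref{binom} using Lucas' Theorem (\Cref{Lucas}), working with the $p$-adic expansions of the relevant numbers. Since $0 \le k \le q-1$ with $q = p^m$, the integer $k$ has a $p$-adic expansion $k = k_{m-1}p^{m-1} + \cdots + k_1 p + k_0$ with each $0 \le k_i \le p-1$, using at most $m$ digits. The whole argument reduces to reading off the digits of $q-1$ and $q-2$ and comparing them with the digits of $k$.

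For part (i), I would first observe that $q-1 = p^m - 1$ has the $p$-adic expansion in which every one of the $m$ digits equals $p-1$; that is, $q-1 = (p-1)p^{m-1} + \cdots + (p-1)p + (p-1)$. Applying \Cref{Lucas} gives
\[
\binom{q-1}{k} \equiv \prod_{i=0}^{m-1}\binom{p-1}{k_i} \pmod{p}.
\]
Since $0 \le k_i \le p-1$ for each $i$, every factor $\binom{p-1}{k_i}$ is a binomial coefficient with top entry $p-1 \ge k_i$, hence nonzero, and in fact none of these factors is divisible by $p$ (each is a product of the integers $p-1,\dots,p-k_i$ divided by $k_i!$, all of whose factors are coprime to $p$). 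Therefore the product is nonzero modulo $p$, establishing (i).

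For part (ii), the key observation is that $q-2 = p^m - 2$ has the $p$-adic expansion whose lowest digit is $p-2$ and whose remaining digits are all $p-1$; that is, $q-2 = (p-1)p^{m-1} + \cdots + (p-1)p + (p-2)$. Applying \Cref{Lucas} again yields
\[
\binom{q-2}{k} \equiv \binom{p-2}{k_0}\prod_{i=1}^{m-1}\binom{p-1}{k_i} \pmod{p}.
\]
As in part (i), each of the factors $\binom{p-1}{k_i}$ for $i \ge 1$ is nonzero modulo $p$, so the entire product vanishes modulo $p$ if and only if the single factor $\binom{p-2}{k_0}$ does. Now $\binom{p-2}{k_0} = 0$ (by our convention, since $p-2 < k_0$) precisely when $k_0 = p-1$, and for $0 \le k_0 \le p-2$ the coefficient $\binom{p-2}{k_0}$ is a nonzero integer coprime to $p$. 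Since $k_0$ is exactly the residue $k \bmod p$, the condition $k_0 = p-1$ is the same as $k \equiv p-1 \pmod{p}$, which gives (ii).

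I do not anticipate a genuine obstacle here, as the result is a direct digit-by-digit application of Lucas' Theorem once the $p$-adic expansions of $q-1$ and $q-2$ are correctly identified. The only point requiring a small amount of care is the edge case handling in (ii): one must confirm that the factor $\binom{p-2}{k_0}$ is the only one that can introduce a factor of $p$, which follows because the top entries $p-1$ of the other factors dominate their bottom entries $k_i$ and those coefficients are never divisible by $p$. A secondary minor point is to note that the convention $\binom{n}{k}=0$ for $n<k$, stated just before \Cref{Lucas}, is what makes the $k_0 = p-1$ case evaluate to $0$ rather than being undefined.
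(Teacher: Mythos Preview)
Your proposal is correct and follows essentially the same argument as the paper: both proofs write out the $p$-adic expansions of $q-1$ and $q-2$, apply Lucas' Theorem, and note that the only factor that can vanish modulo $p$ is $\binom{p-2}{k_0}$, which does so exactly when $k_0=p-1$.
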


\begin{proof}
Let $k=k_{m-1}p^{m-1}+\cdots+k_0$ be the $p$-adic expansion of $k$. 
\begin{enumerate}[\normalfont(i)]
\item  Write $q-1=(p-1)p^{m-1}+\cdots+(p-1)p+(p-1)$. By Lucas' Theorem,  $\binom{q-1}{k} \equiv \binom{p-1}{k_{m-1}}\cdots  \binom{p-1}{k_0} \not\equiv 0 \pmod{p}$, since $0\leq k_i < p$.
\item Write $q-2=(p-1)p^{m-1}+\cdots+(p-1)p+(p-2)$. By Lucas' Theorem,  $\binom{q-2}{k} \equiv \binom{p-1}{k_{m-1}}\cdots \binom{p-1}{k_{1}}  \binom{p-2}{k_0} \pmod{p}$. Since $\binom{p-1}{k_i}\not\equiv 0 \pmod{p}$ for each $i$, we have $\binom{q-2}{k} \equiv 0 \pmod{p}$ if and only if 
$\binom{p-2}{k_0} \equiv 0 \pmod{p}$, which holds if and only if $k_0=p-1$.
\end{enumerate}
\end{proof}

\section{Existence results}\label{suff}

In this section we construct permutation binomials of the form $f(x) = x^r(x^{q-1} + a)$ 
over $\f_{q^e}$.  Two immediate necessary conditions are that $\gcd(r,q-1)=1$ and $(-a)^{\ell}\neq 1$, where $\ell=q^{e-1} + \cdots + q + 1$. The former condition prevents us from writing $f(x)$ as a composition of a binomial and a  permutation monomial $x^n$, where $n=\gcd(r,q-1)$.  Next we consider a more general version of the latter condition, and show that it is equivalent to having a family of linearized binomials that  permute $\fqe$. The result is not new, but we include a proof for completeness.

\begin{lemma}\label{lpb1}
Let $e \geq 2$ and  $a \in \mathbb{F}_{q^e}^*$. Then $L(x) = x^{q^h} + ax$ permutes $\mathbb{F}_{q^e}$ if and only if $(-a)^{(q^e-1)/(q^d-1)} \neq 1$, where $d = \gcd(e, h)$.
\end{lemma}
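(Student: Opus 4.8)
The plan is to exploit the fact that $L(x) = x^{q^h} + ax$ is an $\fq$-linear map on $\fqe$, regarded as an $e$-dimensional vector space over $\fq$: indeed $x^{q^h}$ is additive in characteristic $p$ and fixes scalars in $\fq$ (since $c^{q^h}=c$ for $c\in\fq$), while $x\mapsto ax$ is clearly $\fq$-linear. Since a linear endomorphism of a finite-dimensional vector space is a bijection if and only if it is injective, $L$ permutes $\fqe$ if and only if its kernel is trivial, i.e., the only $x\in\fqe$ with $L(x)=0$ is $x=0$.

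First I would analyze the nonzero roots of $L$. For $x\neq 0$, the equation $L(x)=0$ is equivalent to $x^{q^h-1}=-a$. Thus $L$ has a nonzero root if and only if $-a$ lies in the image of the power map $\varphi\colon\fqe^*\to\fqe^*$ given by $\varphi(x)=x^{q^h-1}$. Consequently, $L$ permutes $\fqe$ precisely when $-a$ is not a $(q^h-1)$-th power in $\fqe^*$.

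Next I would invoke the cyclic structure of $\fqe^*$, which has order $q^e-1$. In a cyclic group of order $n$, the image of the $m$-th power map is the unique subgroup of order $n/\gcd(m,n)$, and an element $y$ belongs to this subgroup if and only if $y^{n/\gcd(m,n)}=1$. Applying this with $n=q^e-1$ and $m=q^h-1$, and using the standard identity $\gcd(q^h-1,q^e-1)=q^{\gcd(h,e)}-1=q^d-1$, I conclude that $-a$ is a $(q^h-1)$-th power if and only if $(-a)^{(q^e-1)/(q^d-1)}=1$. Negating this equivalence yields exactly the stated criterion.

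The routine ingredients are the characterization of images of power maps in cyclic groups and the gcd identity $\gcd(q^h-1,q^e-1)=q^{\gcd(h,e)}-1$; the latter is the one genuine computation, provable by a short induction mirroring the Euclidean algorithm on the exponents. I do not anticipate a serious obstacle here: once the problem is reduced to the solvability of $x^{q^h-1}=-a$ in $\fqe^*$, everything follows from elementary cyclic group theory, which is why the result is classical and included only for completeness.
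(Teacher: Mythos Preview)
Your argument is correct and follows essentially the same route as the paper: both reduce to the solvability of $x^{q^h-1}=-a$ in $\fqe^*$ and then invoke the cyclic structure together with $\gcd(q^h-1,q^e-1)=q^d-1$. The only cosmetic difference is that the paper fixes a primitive element and uses B\'ezout explicitly to exhibit a nonzero root, whereas you quote the general image-of-power-maps criterion in cyclic groups.
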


\begin{proof}
The linearized binomial $L(x)$ permutes $\mathbb{F}_{q^e}$ if and only if zero is its only root  in $\mathbb{F}_{q^e}$. Let $\xi$ be a primitive element of $\mathbb{F}_{q^e}$. Suppose that $L(\xi^k) = 0$ for some integer $k$, so $-a=\xi^{k(q^h-1)}$. Then $(-a)^{(q^e-1)/(q^d-1)} =  \left(\xi^{q^e-1} \right)^{k(q^h-1)/(q^d-1)} = 1$. Conversely, if $(-a)^{(q^e-1)/(q^d-1)} = 1$, then $-a = \xi^{r(q^d-1)}$ for some integer $r$. Since $\gcd(q^e-1, q^h-1) = q^d-1$, we write $q^d-1=b(q^e-1) + c(q^h-1)$ for some integers $b$ and $c$. Then $L(\xi^{rc}) = \xi^{rc}\left(\xi^{rc(q^h-1)} + a\right) = \xi^{rc} \left(\xi^{ r(q^d-1)-rb(q^e-1)} + a \right) = 0$.
\end{proof}

Next we show that any binomial  $f(x)=x^{r}(x^{q-1}+a)\in\fqe[x]$ that is congruent to the composition of a linearized binomial and a monomial  modulo $x^{q^e}-x$ satisfies $f(x)\equiv L(x^r)\pmod{x^{q^e}-x}$.

\begin{lemma}\label{Comp}
Let  $f(x)= x^{r}(x^{q-1}+a)\in\mathbb F_{q^e}[x]$ with $e \geq 2$ and  $a \neq 0$. Then $f(x)\equiv \bar L(x^{s})\pmod{x^{q^e}-x}$, where $\bar L(x)=bx^{q^m}+cx^{q^n}$ for positive integers   $s$, $m$, and $n$  with $n<m<e$ if and only if  $f(x)\equiv L(x^{r})\pmod{x^{q^e}-x}$,  where $L(x)=x^{q^h}+ax$ for some positive integer $h$.
\end{lemma}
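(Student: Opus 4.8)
The plan is to reduce everything to a single congruence on exponents modulo $q^e-1$. First I would rewrite $f(x)=x^r(x^{q-1}+a)\equiv x^{r+q-1}+ax^r \pmod{x^{q^e}-x}$ and record the bookkeeping rules: modulo $x^{q^e}-x$ the exponents may be read modulo $q^e-1$, where every power $q^i$ is invertible (since $q^i\cdot q^{e-i}=q^e\equiv 1$) and $q^i\bmod(q^e-1)$ depends only on $i\bmod e$. Because $q-1\not\equiv 0\pmod{q^e-1}$ for $e\geq 2$, the exponents $r+q-1$ and $r$ are distinct, so $f$ is a genuine two-term binomial with coefficients $1$ and $a$. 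With this setup the right-hand condition $f(x)\equiv L(x^r)=x^{rq^h}+ax^r$ becomes, after equating the terms other than the common $ax^r$, the single congruence $r(q^h-1)\equiv q-1\pmod{q^e-1}$; I will use this reformulation throughout.

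The reverse implication is essentially immediate: given $L(x)=x^{q^h}+ax$ with $f(x)\equiv L(x^r)$, I take $\bar L=L$ and $s=r$, so that $\bar L(x^s)=L(x^r)\equiv f(x)$ exhibits $f$ as a linearized binomial composed with the monomial $x^s$, with $b=1$, $c=a$, and exponents $q^h$ and $q^0$; reducing $h$ modulo $e$ places the larger exponent below $q^e$.

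For the forward implication, suppose $f(x)\equiv \bar L(x^s)=bx^{sq^m}+cx^{sq^n}\pmod{x^{q^e}-x}$. Since $f$ has exactly two distinct nonzero terms, $\bar L(x^s)$ cannot collapse to a single term, and its two terms must match those of $f$ term-by-term; this forces $\{b,c\}=\{1,a\}$ and splits into two cases. In \textbf{Case A} ($sq^m\equiv r+q-1$, $sq^n\equiv r$, $b=1$, $c=a$) I subtract to get $sq^n(q^{m-n}-1)\equiv q-1$ and substitute $sq^n\equiv r$ to obtain $r(q^{m-n}-1)\equiv q-1$, so $h:=m-n$ works. In \textbf{Case B} ($sq^m\equiv r$, $sq^n\equiv r+q-1$, $b=a$, $c=1$) I subtract the other way, use $q^{n-m}\equiv q^{e-(m-n)}$ together with $sq^m\equiv r$, and obtain $r\bigl(q^{e-(m-n)}-1\bigr)\equiv q-1$, so $h:=e-(m-n)$ works. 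In either case $h$ is a positive integer in $\{1,\dots,e-1\}$, the resulting $L(x)=x^{q^h}+ax$ inherits exactly the constant $a$ of $f$, and feeding the congruence back in gives $f(x)\equiv x^{rq^h}+ax^r=L(x^r)$, as required.

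I expect the main obstacle to be the clean handling of the exponent arithmetic modulo $q^e-1$ — in particular inverting the powers of $q$ in Case B and checking that the derived $h$ stays positive and in range — together with verifying that the term-matching is genuinely exhaustive, i.e.\ that $f$ has two distinct nonzero terms and that $\bar L(x^s)$ does not degenerate. By contrast, the coefficient comparison is routine once the exponents are matched, and it automatically reproduces the same constant $a$ in $L$.
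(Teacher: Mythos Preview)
Your proof is correct and follows essentially the same route as the paper: both expand $f$ and $\bar L(x^s)$ term by term, split into the two cases according to which exponent of $\bar L(x^s)$ matches $r+q-1$, and extract $h=m-n$ in one case and $h=e-(m-n)=e+n-m$ in the other, with the converse declared immediate. One small wrinkle worth patching: for the reverse implication you set $n=0$, whereas the lemma as stated asks for positive $n$; the paper's own ``the converse follows immediately'' glosses over this too, and the fix (replacing $s$ by $s\cdot q^{e-1}\bmod(q^e-1)$ to shift both exponents up by one) is routine.
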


\begin{proof}

We have $f(x) \equiv \bar L(x^{s})\pmod{x^{q^e}-x}$, that is,
$x^{r+q-1}+ax^r\equiv bx^{sq^m}+cx^{sq^n}\pmod{x^{q^e}-x}$, which holds in one of the following situations:

\begin{itemize}
\item[(1)] $b=1$, $a=c$, $r+q-1 \equiv sq^m \pmod{q^e-1}$, and $r \equiv sq^n \pmod{q^e-1}$, or
\item[(2)] $c=1$, $a=b$, $r+q-1 \equiv sq^n \pmod{q^e-1}$, and $r \equiv sq^m \pmod{q^e-1}$.
\end{itemize}
 \noindent We want to show that there exists a positive integer $h$ such that $rq^h\equiv r+q-1\pmod {q^e-1}$. It suffices to choose $h=m-n$ in Case (1),  and $h=e+n-m$ in Case (2). This concludes the proof as the converse follows immediately.
\end{proof}

In order to find all binomials $f(x)=x^{r}(x^{q-1}+a)$ that are congruent to compositions of $L(x)=x^{q^h}+ax$ and $x^r$ modulo $x^{q^e}-x$, we need to solve 
$r(q^h-1) -q+1 \equiv 0 \pmod{q^e-1}$ for $h$ and $r$.

\begin{lemma} \label{Congr}
Let $e \geq 2$ and $\ell = q^{e-1}+\cdots +q+1$. Then $r(q^h-1) -q+1 \equiv 0 \pmod{q^e-1}$ if and only if $\gcd( h,e) = 1$ and $r \equiv s\ell + \sum_{i=0}^{k-1}q^{hi}  \pmod{q^e-1}$,
 where $k \pmod{e} = h^{-1}$ and $s$ is any integer.
\end{lemma}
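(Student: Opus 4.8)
The plan is to read the condition $r(q^h-1)-q+1\equiv 0 \pmod{q^e-1}$ as the linear congruence
$$r(q^h-1)\equiv q-1 \pmod{q^e-1}$$
in the unknown $r$, treating $h$ as a parameter, and then to apply the standard theory of linear congruences together with the well-known identity $\gcd(q^h-1,q^e-1)=q^{\gcd(h,e)}-1$. First I would recall that a congruence $ax\equiv b\pmod n$ is solvable if and only if $\gcd(a,n)\mid b$. Here $a=q^h-1$, $n=q^e-1$, $b=q-1$, and $\gcd(a,n)=q^d-1$ with $d=\gcd(h,e)$. Since $q\ge 2$, the divisibility $q^d-1\mid q-1$ forces $q^d-1\le q-1$ and hence $d=1$. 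This settles the forward implication: if some $r$ satisfies the congruence, then necessarily $\gcd(h,e)=1$.

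Assume now $\gcd(h,e)=1$, so that $\gcd(q^h-1,q^e-1)=q-1$ and $(q^e-1)/(q-1)=\ell$. By the structure of linear congruences, once one particular solution $r_0$ is found, the full solution set modulo $q^e-1$ is exactly the coset $\{\,r_0+s\ell : s\in\mathbb{Z}\,\}$, because the homogeneous equation $r(q^h-1)\equiv 0\pmod{q^e-1}$ is equivalent to $\ell\mid r$ (dividing through by $q-1$ and using that $(q^h-1)/(q-1)$ is coprime to $\ell$). Thus it remains only to exhibit one particular solution and verify that it has the announced shape $\sum_{i=0}^{k-1}q^{hi}$.

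The key computation is to check that $r_0=\sum_{i=0}^{k-1}q^{hi}$, with $k\equiv h^{-1}\pmod e$, is such a solution. Multiplying by $q^h-1$ telescopes:
$$r_0(q^h-1)=\sum_{i=0}^{k-1}\left(q^{h(i+1)}-q^{hi}\right)=q^{hk}-1.$$
Writing $hk=1+me$ (valid since $hk\equiv 1\pmod e$) and reducing modulo $q^e-1$ via $q^e\equiv 1$, I obtain $q^{hk}-1=q\,(q^{e})^{m}-1\equiv q-1\pmod{q^e-1}$, so $r_0$ is indeed a particular solution. Combined with the previous paragraph this gives $r\equiv s\ell+\sum_{i=0}^{k-1}q^{hi}\pmod{q^e-1}$, and the converse direction is immediate by substituting such an $r$ back into the congruence.

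I do not expect a hard step so much as one genuine subtlety to address explicitly: the well-definedness of the formula. The sum $\sum_{i=0}^{k-1}q^{hi}$ depends on the chosen representative $k$ of $h^{-1}\bmod e$, while the statement only pins down $k$ modulo $e$. I would resolve this by noting that replacing $k$ by $k+e$ augments $r_0$ by $q^{hk}\sum_{j=0}^{e-1}q^{hj}$; since $\gcd(h,e)=1$ the residues $hj\bmod e$ for $0\le j\le e-1$ run over all of $\{0,\dots,e-1\}$, so $\sum_{j=0}^{e-1}q^{hj}\equiv\ell\pmod{q^e-1}$, and the change is a multiple of $\ell$ that is absorbed into the free parameter $s$. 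Hence the characterization is independent of the representative chosen for $h^{-1}$.
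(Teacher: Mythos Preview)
Your proof is correct and follows essentially the same approach as the paper's: both verify that $\sum_{i=0}^{k-1}q^{hi}$ is a particular solution via telescoping, deduce $\gcd(h,e)=1$ from solvability (the paper by dividing through by $q-1$ and working modulo $\ell$, you via the identity $\gcd(q^h-1,q^e-1)=q^{\gcd(h,e)}-1$, which amounts to the same computation), and then identify the full solution set as the coset $r_0+\ell\mathbb{Z}$. Your explicit discussion of why the formula is independent of the chosen representative $k$ for $h^{-1}\pmod e$ is a point the paper leaves implicit.
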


\begin{proof}
We first observe that $r \equiv s\ell + \sum_{i=0}^{k-1}q^{hi}  \pmod{q^e-1}$
satisfies
\begin{align*} \label{congruence}
r(q^{h}-1)-q+1 \equiv & s\ell(q^h-1)+\displaystyle\sum_{i=1}^{k}q^{h i}- \displaystyle\sum_{i=0}^{k -1}q^{h i}-q+1 \\
\equiv & q^{hk}-q \equiv 0 \pmod{q^e-1}.
\end{align*}
Conversely, suppose that $r(q^h-1) -q+1 \equiv 0 \pmod{q^e-1}$. So $r\left(\frac{q^h-1}{q-1}\right) \equiv 1 \pmod{\frac{q^e-1}{q-1}}$ implies that $\gcd\left( \frac{q^h-1}{q-1},\frac{q^e-1}{q-1}\right) = 1$, that is, $\gcd(h,e) = 1$.  Let $\Bar{r} = \sum_{i=0}^{k-1}q^{hi}  \pmod{q^e-1}$. Then, similarly to the calculations above, $\Bar{r}(q^h-1)-q+1 \equiv 0 \pmod{q^e-1}$, so $\Bar{r}$ is also an inverse of $\frac{q^h-1}{q-1}$ modulo $\ell$. Thus $r \equiv \Bar{r} \pmod{\ell}$, i.e., $r \equiv s\ell + \sum_{i=0}^{k-1}q^{hi}  \pmod{q^e-1}$  for any integer $s$.
\end{proof}

\Cref{Compos} is our main tool to construct permutation binomials of the form $x^r(x^{q-1}+a)$ over $\fqe$.  We find all $r$'s that give permutation binomials that are compositions of a linearized binomial and a monomial. For these values of $r$, we require $r$  to be coprime to $q-1$ as we can show that this implies that $r$ is coprime to $q^e-1$.

\begin{theorem}\label{Compos} Let $f(x) = x^r(x^{q-1} + a)\in\mathbb F_{q^e}[x]$ with $e \geq 2$ and $a\neq 0$, and let  $\ell = q^{e-1}+\cdots +q+1$. Then $f(x)$ 
permutes $\fqe$ and is the composition of a linearized binomial and a monomial if and only if $(-a)^\ell \neq 1$ and $r = s\ell + \sum_{i=0}^{k-1}q^{hi}  \pmod{q^e-1}$, where $\gcd(h, e) = 1$, $k \pmod{e} = h^{-1}$, $s$ is a positive integer, and  $\gcd(r, q-1) = 1$.
\end{theorem}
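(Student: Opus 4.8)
The plan is to assemble this theorem from the three preparatory lemmas (\Cref{lpb1}, \Cref{Comp}, and \Cref{Congr}) rather than proving anything new from scratch. The statement is an ``if and only if'' characterizing exactly those $r$ for which $f(x)=x^r(x^{q-1}+a)$ both permutes $\fqe$ \emph{and} arises as a composition of a linearized binomial with a monomial. I would first reduce the compositional hypothesis to a concrete congruence, then use that congruence to pin down $r$, and finally check the permutation condition separately.

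For the forward direction, suppose $f(x)$ permutes $\fqe$ and is congruent modulo $x^{q^e}-x$ to the composition of a linearized binomial $\bar L(x)=bx^{q^m}+cx^{q^n}$ with a monomial $x^s$. By \Cref{Comp}, this composition can be rewritten as $f(x)\equiv L(x^r)\pmod{x^{q^e}-x}$, where $L(x)=x^{q^h}+ax$ for some positive integer $h$; concretely this means $x^{r+q-1}+ax^r\equiv x^{rq^h}+ax^r$, i.e.\ $rq^h\equiv r+q-1\pmod{q^e-1}$, which is exactly the congruence $r(q^h-1)-q+1\equiv 0\pmod{q^e-1}$ that \Cref{Congr} analyzes. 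Applying \Cref{Congr} then yields $\gcd(h,e)=1$, $k\pmod e=h^{-1}$, and $r\equiv s\ell+\sum_{i=0}^{k-1}q^{hi}\pmod{q^e-1}$ for some integer $s$, which is precisely the claimed form of $r$. The two remaining necessary conditions are routine: $\gcd(r,q-1)=1$ holds because a common factor would let us write $f$ as a monomial composite that fails to be bijective (as noted in the opening paragraph of this section), and $(-a)^\ell\neq 1$ is forced because $f$ permutes $\fqe$, so applying \Cref{lpb1} to $L(x)=x^{q^h}+ax$ with $d=\gcd(e,h)=1$ gives $(-a)^{(q^e-1)/(q-1)}=(-a)^\ell\neq 1$.

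For the converse, assume $(-a)^\ell\neq 1$, that $r$ has the stated form with $\gcd(h,e)=1$ and $\gcd(r,q-1)=1$. By \Cref{Congr} (its easy direction, verified by the displayed computation in that proof), the hypotheses on $r$, $h$, $k$, $s$ guarantee $r(q^h-1)-q+1\equiv 0\pmod{q^e-1}$, which rearranges to $x^{r+q-1}\equiv x^{rq^h}\pmod{x^{q^e}-x}$. Hence $f(x)=x^{r+q-1}+ax^r\equiv x^{rq^h}+ax^r=L(x^r)\pmod{x^{q^e}-x}$, so $f$ is the composition of the linearized binomial $L(x)=x^{q^h}+ax$ with the monomial $x^r$. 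It remains to show this composition permutes $\fqe$. Since $\gcd(h,e)=1$ we have $d=\gcd(e,h)=1$, so $\frac{q^e-1}{q^d-1}=\ell$, and the hypothesis $(-a)^\ell\neq 1$ is exactly the condition in \Cref{lpb1} guaranteeing that $L(x)$ permutes $\fqe$. The monomial $x^r$ permutes $\fqe$ iff $\gcd(r,q^e-1)=1$; here I need the promised implication that $\gcd(r,q-1)=1$ forces $\gcd(r,q^e-1)=1$. A composition of two permutations is a permutation, so $f$ permutes $\fqe$.

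The main obstacle is the step flagged in the paragraph preceding the theorem: showing that $\gcd(r,q-1)=1$ \emph{implies} $\gcd(r,q^e-1)=1$ for $r$ of the special form $r\equiv s\ell+\sum_{i=0}^{k-1}q^{hi}\pmod{q^e-1}$. This is not automatic for arbitrary $r$, so I expect the argument to exploit the structure of $\sum_{i=0}^{k-1}q^{hi}$. The idea is that any prime dividing $q^e-1$ but not $q-1$ must divide $\ell$, and I would analyze $r\bmod$ such a prime via the geometric-series identity $(q^h-1)r\equiv q-1$ that $r$ satisfies: if a prime $\pi\mid\gcd(r,q^e-1)$ then reducing this identity modulo $\pi$ gives $0\equiv q-1$, forcing $\pi\mid q-1$, contradicting $\gcd(r,q-1)=1$ unless $\pi\mid r$ and $\pi\mid q-1$ simultaneously, which is excluded. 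Making this clean — handling the reduction modulo $q^e-1$ carefully so the congruence $(q^h-1)r\equiv q-1\pmod{q^e-1}$ can be read modulo $\pi$ — is the one place requiring genuine care; everything else is bookkeeping across the three lemmas.
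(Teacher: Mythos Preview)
Your proposal is correct and follows the same route as the paper: assemble the characterization from \Cref{lpb1}, \Cref{Comp}, and \Cref{Congr}, with the only substantive new step being that $\gcd(r,q-1)=1$ forces $\gcd(r,q^e-1)=1$ for $r$ satisfying $r(q^h-1)\equiv q-1\pmod{q^e-1}$. The paper's execution of this last step is slightly cleaner than your prime-by-prime argument: write $r(q^h-1)-(q-1)=(q^e-1)t$, divide through by $q-1$ to obtain the B\'ezout relation $r\sum_{i=0}^{h-1}q^i - t\ell = 1$, so $\gcd(r,\ell)=1$, which together with $\gcd(r,q-1)=1$ and $q^e-1=(q-1)\ell$ gives $\gcd(r,q^e-1)=1$.
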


\begin{proof} The combination of the three previous lemmas implies that we only need to show that $\gcd(r, q^e-1)=1$ to complete the `if' direction.
By~\Cref{Congr}, we have $r(q^h-1) -q+1 \equiv 0 \pmod{q^e-1}$. By writing $r(q^{h}-1)-q+1 =(q^{e}-1)t$ for an integer $t$ and dividing both sides by $q-1$, we obtain that $r\sum_{i=0}^{h -1}q^i-t\ell=1$, so $\gcd(r,\ell)=1$, which means that $\gcd(r,q^e-1)=1$. 
\end{proof}

\begin{table}
\centering
\begin{tabular}{|l |l| l| l| l| l|} 
 \hline
 $e$ & $\ell$ & $h$ & $k$ & $r$ & $r\pmod{\ell}$ \\ [0.5ex] 
 \hline\hline
 2 & $q+1$ & 1 & 1& $s\ell+1$ &1\\ 
 \hline
 \multirow{2}{0.5em} {$3$} &  \multirow{2}{4em}{$q^2+q+1$} & 1 & 1& $s\ell+1$ & 1\\ \cline{3-6}
  &  &  2 & 2 & $s\ell+q^2+1$ & $\ell-q$\\ 
 \hline
 \multirow{3}{0.5em} {4} &  \multirow{3}{4em}{$q^3+q^2+q+1$} & 1 & 1 & $s\ell + 1$& $1$\\ \cline{3-6}
  & & 3 & 3 & $s\ell + q^6+q^3+1$& \\
 &  &  &  & $\equiv  s\ell + q^3+q^2+1\pmod{q^4-1}$  & $\ell-q$ \\ [1ex] 
 \hline
\multirow{4}{0.5em} {$5$} &  \multirow{4}{4em}{ $q^4+q^3+q^2+q+1$} & 1&1&$s\ell+1$& $1$\\ \cline{3-6}
 & & 2&3& $s\ell+q^4+q^2+1$& $q^4+q^2+1$\\ \cline{3-6}
 & & 3&2&  $s\ell+q^3+1$ & $q^3+1$\\ \cline{3-6}
 &  & 4&4&  $s\ell+q^{12}+q^8+q^4+1$& \\
 &  & &&  $\equiv s\ell+q^{4}+q^3+q^2+1\pmod{q^5-1}$& $\ell-q$\\
\hline
 \multirow{3}{0.5em}{6} & \multirow{3}{4em}{ $q^5+q^4+q^3+q^2+q+1$} & 1&1&$s\ell+1$& $1$\\ \cline{3-6}
 &  & 5&5&$s\ell+q^{20}+q^{15}+q^{10}+q^5+1$& \\
  &  & &&$\equiv s\ell+q^{5}+q^{4}+q^{3}+q^2+1\pmod{q^6-1}$& $\ell-q$\\
   &&&&&\\
\hline
 \multirow{9}{0.5em}{7} &\multirow{9}{4em} {$q^6+q^5+q^4+q^3+q^2+q+1$} & 1&1&$s\ell+1$& $1$\\
 \cline{3-6}
 & & 2&4&$s\ell+q^6+q^4+q^2+1$& $q^6+q^4+q^2+1$\\
 \cline{3-6}
 & & 3&5&$s\ell+q^{12}+q^9+q^6+q^3+1$& \\ 
 & & &&$\equiv s\ell+q^6+q^5+q^3+q^2+1\pmod{q^7-1}$& $\ell-q^4-q$\\ \cline{3-6}
 & & 4&2&$s\ell+q^4+1$& $q^4+1$\\ \cline{3-6}
 & & 5&3&$s\ell+q^{10}+q^5+1$& \\ 
  & & &&$\equiv s\ell+q^5+q^3+1 \pmod{q^7-1}$& $q^5+q^3+1$\\ \cline{3-6}
 & & 6&6&$s\ell+q^{30}+q^{24}+q^{18}+q^{12}+q^6+1$& \\
  & & &&$\equiv s\ell+q^{6}+q^{5}+q^{4}+q^{3}+q^2+1\pmod{q^7-1}$& $\ell-q$\\
\hline
\multirow{7}{0.5em}{8} &\multirow{7}{4em} {$q^7+q^6+q^5+q^4+q^3+q^2+q+1$} & 1&1&$s\ell+1$& $1$\\ \cline{3-6}
 & & 3&3&$s\ell+q^{6}+q^3+1$& $q^{6}+q^3+1$  \\ \cline{3-6}
 & & 5&5&$s\ell+q^{20}+q^{15}+q^{10}+q^5+1$& \\
 &&&&$\equiv s\ell+q^7+q^5+q^4+q^2+1\pmod{q^8-1}$ & $\ell-q^6-q^3-q$\\ \cline{3-6}
 & & 7&7&$s\ell+q^{42}+q^{35}+q^{28}+q^{21}+ q^{14}+q^7+1$& \\
 &&&&$ \equiv \;s\ell + q^{7}+q^{6}+q^{5}+q^{4}+ q^{3}+q^2+1$& $\ell-q$\\
 &&&&$\pmod{q^8-1}$& \\
  \hline
  \end{tabular}
\caption{Values of $r$ given by~\Cref{Compos} and their reductions modulo $\ell$, for 
$2\leq e\leq 8$.} 
\label{rvalues}
\end{table}

\Cref{rvalues} displays the values of   $r$ given by  \Cref{Compos}, for $2\leq e\leq 8$. We also list their corresponding reductions modulo $\ell$ to illustrate that they are the same values that appear in~\Cref{e2345}, for $2\leq e \leq 6$, showing  one direction of our characterization. 
This direction can be also obtained from the next result that is more general. The other direction is proved in~\Cref{special}.

\begin{corollary}\label{part}
Let $f(x) = x^r(x^{q-1} + a) \in \mathbb{F}_{q^e}[x]$  with $e\geq 2$ and $a\neq 0$, and let $\ell = q^{e-1}+\cdots +q+1$. Suppose that $(-a)^{\ell} \ne 1$ and $\gcd(r, q-1) = 1$.
\begin{enumerate}[\normalfont(i)]
    \item If   $r\pmod{\ell}\in\{1,\ell-q\}$, then $f(x)$ permutes $\mathbb{F}_{q^e}$.
    \item  If $e$ is odd and $r \pmod{\ell}\in\{q^{(e+1)/2}+1,\ell-q^{(e+1)/2}-q\}$,  then $f(x)$ permutes $\mathbb{F}_{q^e}$.
    \end{enumerate}
\end{corollary}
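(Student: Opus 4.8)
The plan is to derive both parts directly from \Cref{Compos} by exhibiting, for each prescribed residue of $r$ modulo $\ell$, an admissible exponent $h$ that produces it. First I would record the reduction that makes $r\bmod\ell$ the decisive invariant: the permutation property of $f$ depends only on $r$ modulo $q^e-1$, and since $\gcd(r,q-1)=1$ forces $S_{N,r}=\emptyset$ whenever $q-1\nmid N$ (because $A_j\in\mathbb Z$ requires $q-1\mid rN$), only the residue of $r$ modulo $\ell$ influences the criterion in \Cref{MPWbin}. Concretely, as $s$ runs over the positive integers, $s\ell+\sum_{i=0}^{k-1}q^{hi}\bmod(q^e-1)$ runs over exactly the $q-1$ residues congruent to $\bar r:=\sum_{i=0}^{k-1}q^{hi}$ modulo $\ell$, since $(q-1)\ell\equiv 0\pmod{q^e-1}$. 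Hence, after reducing the given $r$ modulo $q^e-1$, \Cref{Compos} certifies that every $r$ with $\gcd(r,q-1)=1$ and $r\equiv\bar r\pmod\ell$ yields a permutation, so it suffices to realize each target residue as some $\bar r\bmod\ell$.

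For part (i) I would take $h=1$ and $h=e-1$. The choice $h=1$ gives $k\equiv 1\pmod e$ and $\bar r=1$, producing the residue $1$. The choice $h=e-1$ is admissible since $\gcd(e-1,e)=1$, and $h^{-1}\equiv-1\equiv e-1\pmod e$ gives $k=e-1$, so $\bar r=\sum_{i=0}^{e-2}q^{(e-1)i}$. Reducing exponents modulo $e$ via $q^e\equiv 1$, the map $i\mapsto(e-1)i\equiv-i\pmod e$ is a bijection, so $\{(e-1)i\bmod e : 0\le i\le e-2\}=\{0,1,\dots,e-1\}\setminus\{1\}$, the omitted value being $(e-1)^2\equiv 1$. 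Thus $\bar r\equiv\ell-q\pmod{q^e-1}$, and as $0\le\ell-q<\ell$ this is exactly the residue $\ell-q$.

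For part (ii), with $e$ odd I would take $h=(e+1)/2$ and $h=(e-1)/2$; both are coprime to $e$, since any common divisor of $e$ and $(e\pm 1)/2$ also divides $e\pm 1$ and hence equals $1$. For $h=(e+1)/2$ one has $2h\equiv 1\pmod e$, so $k=2$ and $\bar r=1+q^{(e+1)/2}$, giving the residue $q^{(e+1)/2}+1$ directly. For $h=(e-1)/2$ one has $-2h\equiv 1\pmod e$, so $k=e-2$; the bijection $i\mapsto hi\bmod e$ restricted to $\{0,\dots,e-3\}$ omits exactly the two residues $h(e-1)\equiv(e+1)/2$ and $h(e-2)\equiv 1$, whence $\bar r\equiv\ell-q-q^{(e+1)/2}\pmod{q^e-1}$, the second target residue.

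The routine but delicate work is the modular bookkeeping in the last two cases: verifying injectivity of $i\mapsto hi\bmod e$ on the truncated range, correctly identifying the one or two omitted residues, and checking that the resulting integer $\bar r$ already lies in $[0,\ell)$ so that its reduction modulo $\ell$ equals the claimed value. I expect the main (minor) obstacle to be confirming that the small degenerate cases are consistent rather than exceptional — in particular $e=3$, where $(e-1)/2=1$ and the part-(ii) residues collapse onto the part-(i) residues $1$ and $\ell-q$ — together with the strict inequalities $q<\ell$ and $q+q^{(e+1)/2}<\ell$ that justify the final reductions modulo $\ell$.
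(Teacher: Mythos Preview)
Your proposal is correct and follows exactly the paper's approach: applying \Cref{Compos} with $h=1$ and $h=e-1$ for part~(i), and with $h=(e+1)/2$ and $h=(e-1)/2$ for part~(ii). The paper's proof is a two-line invocation of \Cref{Compos} with these values of $h$, whereas you additionally spell out the modular bookkeeping (identifying the omitted exponents and checking the reductions lie in $[0,\ell)$) that the paper leaves implicit; your preliminary discussion invoking \Cref{MPWbin} to justify that only $r\bmod\ell$ matters is not strictly necessary, since once $r\equiv\bar r\pmod\ell$ one can directly write $r\equiv s\ell+\bar r\pmod{q^e-1}$ for a suitable $s$ and feed this into \Cref{Compos}.
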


\begin{proof}
\begin{enumerate}[\normalfont(i)]
\item Apply~\Cref{Compos} with $h=1$ and $h=e-1$
\item Apply~\Cref{Compos}  with $h=(e+1)/2$ and $h=(e-1)/2$.
\end{enumerate}
\end{proof}

When $e=3$, the two sets provided by~\Cref{part} are the same, i.e., $\{1,\ell-q\} = \{q^{(e+1)/2}+1,\ell-q^{(e+1)/2}-q\}$.

\section{Nonexistence results} \label{nece}

In this section we use~\Cref{MPWbin} to show the nonexistence of permutation binomials of the form $f(x)=x^r(x^{q-1}+a)$ over $\mathbb{F}_{q^e}$ for $e\geq 2$ and $q\neq 2$. When $q=2$, the binomial takes the form $f(x)=x^r(x+a)$, so it does not permute $\mathbb F_{2^e}$ if $a\neq 0$.  To calculate the sum~\eqref{sum} with $A_j$ running over $S_N$~\eqref{SNr}, we apply Lucas' Theorem using $q$-adic expansions. 
Our first nonexistence result narrows down the possibilities for  $f(x)$ to those with $r \pmod{\ell} = hq+1$ for some integer $h$.

\begin{prop}
\label{rth} 
Let $f(x) = x^r(x^{q-1} + a) \in \mathbb{F}_{q^e}[x]$ with $e\geq 2$ and $a\neq 0$, and let $\ell=q^{e-1}+\cdots+q+1$. If $r \pmod{\ell}\neq hq+1$ for any integer $h$, then $f(x)$ does not permute $\f_{q^e}$.
\end{prop}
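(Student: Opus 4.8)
The plan is to prove the contrapositive: assuming that $f$ permutes $\fqe$, I will show that $s:=r\pmod{\ell}$ (the nonnegative reduction) necessarily satisfies $s\equiv 1\pmod q$, i.e. $s=hq+1$ for some integer $h\ge 0$. Rather than analysing the criterion in \Cref{MPWbin} at all $N$ simultaneously, I would extract this by evaluating the sum \eqref{sum} at a single, carefully chosen exponent $N$.

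The exponent I would test is $N=q^{e-1}-1$. It is a multiple of $q-1$ (so $q-1\mid rN$ and \Cref{count} applies), it lies in the admissible range $1\le N\le q^e-2$, and crucially $N<\ell$, so \Cref{count} forces $|S_N|\le 1$. Writing $L_k=(q^k-1)/(q-1)=q^{k-1}+\cdots+1$, the elements of $S_N$ are the integers $A_j=j\ell-rL_{e-1}$ lying in $[0,q^{e-1}-1]$, so $S_N\neq\emptyset$ exactly when some multiple of $\ell$ falls in the interval $[rL_{e-1},\,rL_{e-1}+q^{e-1}-1]$. If $S_N=\{A\}$ is a singleton, the criterion sum \eqref{sum} collapses to the single term $\binom{q^{e-1}-1}{A}a^{q^{e-1}-1-A}$; since the base-$q$ digits of $q^{e-1}-1$ are all equal to $q-1$, \Cref{Lucas} combined with \Cref{binom}(i) gives $\binom{q^{e-1}-1}{A}\not\equiv 0\pmod p$, so this term is nonzero. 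Consequently, if $f$ permutes then the criterion forces $S_N=\emptyset$.

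Put $w:=rL_{e-1}\pmod{\ell}$. Emptiness of $S_N$ excludes both $w=0$ and $w\ge \ell-q^{e-1}+1$, the only two ways a multiple of $\ell$ can enter that interval, so $1\le w\le \ell-q^{e-1}$. I would then combine this with the identity $\ell=L_e=qL_{e-1}+1$ and the trivial congruence $r\ell\equiv 0\pmod{\ell}$: reducing $r\ell=q\,(rL_{e-1})+r$ modulo $\ell$ yields $qw+s\equiv 0\pmod{\ell}$. Using $1\le w\le\ell-q^{e-1}$, $0\le s\le \ell-1$, and $q^e-1=(q-1)\ell$, a short computation shows $0<qw+s<2\ell$, whence $qw+s=\ell$ exactly. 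Reducing this modulo $q$ and using $\ell\equiv 1\pmod q$ gives $s\equiv 1\pmod q$, which is precisely the contrapositive of the claim.

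The one genuinely delicate step is this squeezing argument: obtaining the two-sided bound $0<qw+s<2\ell$ so that divisibility by $\ell$ pins the value to exactly $\ell$. It hinges on the sharp upper bound $w\le \ell-q^{e-1}$, which is why $N=q^{e-1}-1$, rather than a smaller $q^k-1$, is the correct exponent to probe; a smaller $k$ would leave too much slack to determine $s\bmod q$. The case $q=2$ requires no separate treatment here, since $x^r(x+a)$ never permutes $\mathbb{F}_{2^e}$ when $a\neq 0$.
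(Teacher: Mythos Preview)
Your proof is correct, and at its core it matches the paper's: both test the criterion at $N=q^{e-1}-1$, use \Cref{count} to get $|S_N|\le 1$, and invoke Lucas with \Cref{binom}(i) to see that a singleton $S_N$ yields a nonzero term.

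The difference is in the finishing step. The paper argues directly: assuming $1\le r<\ell$ with $r\not\equiv 1\pmod q$, it locates the interval $h_0q+2\le r\le(h_0+1)q$, exhibits the explicit element $A_{h_0+1}\in S_N$, checks the inequalities $0<A_{h_0+1}\le N$ by hand, then deals with $r=\ell$ separately and invokes \Cref{rlargerindex} for $r>\ell$. Your contrapositive route is tidier: from $S_N=\emptyset$ you read off $1\le w\le \ell-q^{e-1}$ for $w=rL_{e-1}\bmod\ell$, combine with $\ell=qL_{e-1}+1$ to force $qw+s=\ell$ exactly, and reduce mod $q$. This handles all $r$ uniformly, with no case split on the size of $r$ and no appeal to \Cref{rlargerindex}; it is a modest but genuine streamlining of the paper's argument.
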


\begin{proof}
First, assume that $1 \leq r < \ell$ and $r \ne hq + 1$ for $0 \leq h < \sum_{i=0}^{e-2}q^i$. Then $r \neq  1, q+1, 2q+1, \ldots, \left( \sum_{i=0}^{e-2}q^i \right)q+1$. In other words,  $h_0q+2 \leq r \leq (h_0+1)q$ for some $0 \leq h_0 \leq \sum_{i=1}^{e-2}q^i$. We claim that  $S_N=\{A_{h_0+1}\}$ for $N=q^{e-1}-1$.
By~\Cref{count}, since $N/\ell=(q^{e-1}-1)/(\sum_{i=0}^{e-1}q^i) <1$,   it follows that $|S_N|\leq 1$. We have
$$A_{h_0+1}=(h_0+1)\sum_{i=0}^{e-1}q^i - r \sum_{i=0}^{e-2}q^i.$$ 
\noindent
Since $r \leq (h_0+1)q$,  $$A_{h_0+1} \geq (h_0+1)\sum_{i=0}^{e-1}q^i - (h_0+1) \sum_{i=1}^{e-1}q^i = h_0+1 >0.$$ 
Also, the conditions $r \geq h_0q+2$ and $h_0\leq \sum_{i=1}^{e-2}q^i$ imply that 
\begin{align*}
    A_{h_0+1} &\leq (h_0+1)\sum_{i=0}^{e-1}q^i - (h_0q+2) \sum_{i=0}^{e-2}q^i\\
    &= h_0+ q^{e-1}- \sum_{i=0}^{e-2}q^i\\
    &\leq q^{e-1}-1=N. 
    \end{align*}
    Hence  $S_N=\{A_{h_0+1}\}$.
Write  $N=(q-1)q^{e-2}+\cdots+(q-1)q+ q-1$.By Lucas' Theorem and~\Cref{binom}(i), we obtain that  $$\sum_{A \in S_N} \binom{N}{A}a^{N-A} =\binom{N}{A_{h_0+1}}a^{N-A_{h_0+1}}\neq 0.$$
Therefore $f(x)$ does not permute $\mathbb F_{q^e}$ for $1 \leq r < \ell$. 

For $r=\ell$, let $j= N/(q-1)$. Then $A_j=0, \ S_N = \{0\}$, $\sum_{A \in S_N} \binom{N}{A}a^{N-A}$  $= \binom{N}{0}a^{N} \neq 0$, and $f(x)$ does not permute $\mathbb F_{q^e}$. Since $q-1\mid N$, by~\Cref{rlargerindex} $f(x)$ does not permute $\fqe$ when $r > \ell$. This completes the proof.
\end{proof}


The next two propositions give more precise descriptions of $r \pmod{\ell}= hq+1$ for which $f(x)=x^r(x^{q-1}+a)$ is a permutation binomial by providing conditions on the divisibility of $h$. We begin by showing that for $e$ even we must have  $q+1\mid h$.

\begin{prop} \label{qplusone4}
Let $f(x) = x^{r}(x^{q-1} + a) \in \mathbb{F}_{q^e}[x]$ with $e$ even and $a\neq 0$,  and let $\ell = q^{e-1}+\cdots + q + 1$. If $r \pmod{\ell} = hq+1$ with $q+1 \nmid h$, then $f(x)$ does not permute $\mathbb{F}_{q^e}$.
\end{prop}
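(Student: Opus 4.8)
The plan is to produce, for every $r$ with $r\pmod{\ell}=hq+1$ and $q+1\nmid h$, one single index $N$ at which the Hermite-type sum of \Cref{MPWbin} fails to vanish; non-permutation is then immediate. Since $e$ is even, $q\equiv-1\pmod{q+1}$ forces $q+1\mid\ell$, so I write $\ell=(q+1)\ell_1$ with $\ell_1:=(q^e-1)/(q^2-1)=1+q^2+\cdots+q^{e-2}$, and I set $c:=h\bmod(q+1)$, which lies in $\{1,\dots,q\}$ by hypothesis. The witness I will use is
$$N=(q-1)\ell_1=\frac{q^e-1}{q+1}.$$
It is a multiple of $q-1$, satisfies $1\le N\le q^e-2$ (here $q\ge 3$), and, crucially, its base-$q$ expansion has digit $q-1$ in the even positions $0,2,\dots,e-2$ and digit $0$ in the odd positions.

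First I would pin down $S_N$. Because $N<\ell$ and $q-1\mid N$, \Cref{count} gives $|S_N|\le 1$. To locate the possible element I compute the residue $(-r\ell_1)\bmod\ell$ appearing in \eqref{SNr}: from $\ell=(q+1)\ell_1$ one gets $r\ell_1\equiv\ell_1\,(r\bmod(q+1))\pmod\ell$, and since $r\equiv hq+1\equiv 1-c\pmod{q+1}$, a short case check (treating $c=1$ separately from $2\le c\le q$) yields $(-r\ell_1)\bmod\ell=(c-1)\ell_1$. As $c-1\le q-1$, this residue lies in $[0,N]$, so \eqref{SNr} gives exactly $S_N=\{(c-1)\ell_1\}$. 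The very same computation shows that in the excluded case $c=0$ the residue equals $q\ell_1>N$, so that $S_N=\varnothing$; this is the reassuring check that the witness does not misfire precisely on the permutation cases $q+1\mid h$.

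Next I evaluate the sum. Writing $A^*=(c-1)\ell_1=\sum_{i=0}^{(e-2)/2}(c-1)q^{2i}$, its base-$q$ digits are $c-1\le q-1$ in the even positions and $0$ elsewhere, hence digitwise dominated by those of $N$. \Cref{Lucas} together with \Cref{binom}(i) then gives $\binom{N}{A^*}\equiv\binom{q-1}{c-1}^{e/2}\not\equiv 0\pmod p$, so that $\sum_{A\in S_N}\binom{N}{A}a^{N-A}=\binom{N}{A^*}a^{N-A^*}\neq 0$ in $\mathbb F_{q^e}$, using $a\neq0$. Since $N\le q^e-2$, \Cref{MPWbin} shows $f$ does not permute. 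For the values $r>\ell$ no extra work is needed: because $q-1\mid N$, the set $S_{N,r}$ in \eqref{SNr} depends only on $r\bmod\ell$, so the same nonvanishing sum applies, which is exactly the content of \Cref{rlargerindex}.

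The only genuinely creative step is guessing the witness $N=(q^e-1)/(q+1)$ and seeing why it is the right one: its alternating digit pattern $(q-1,0,q-1,0,\dots)$ is tuned to the factorization $\ell=(q+1)\ell_1$, so that $S_N$ reads off $r\bmod(q+1)$, hence $h\bmod(q+1)$, and the unique surviving element $(c-1)\ell_1$ inherits the same even-position support and is therefore digit-dominated by $N$. Everything after that choice is the bookkeeping sketched above, and I expect no real obstacle beyond handling the boundary residues $c=1$ (where $A^*=0$) and $c=q$ cleanly in the modular reduction.
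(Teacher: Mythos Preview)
Your proposal is correct and follows essentially the same approach as the paper. Both arguments use the witness $N=(q-1)\sum_{i=0}^{(e-2)/2}q^{2i}=(q-1)\ell_1$, identify the unique element of $S_N$ as $(c-1)\ell_1$ (the paper writes this as $(h_0-1)\sum_{i=0}^{(e-2)/2}q^{2i}$ with $h_0=c$), and then apply Lucas' Theorem with \Cref{binom}(i); the only cosmetic difference is that you extract this element via the modular reduction $(-r\ell_1)\bmod\ell$ and the congruence $r\equiv 1-c\pmod{q+1}$, whereas the paper plugs $j=kq+h_0$ directly into the formula for $A_j$ and simplifies.
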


\begin{proof}
We assume that $1 \leq r < \ell$ and  $r=hq+1$, where $h = k(q+1) + h_0$ with positive integers $k$ and $h_0$,  and $1 \leq h_0 \leq q$. Let $N = (q-1) \sum_{i=0}^{(e-2)/2}q^{2i}$. Since $N<\ell$,  \Cref{count} implies that $S_N$ has at most one element. We compute 
\begin{align*}
A_{kq+h_0} = &\left(kq+h_0\right)\sum_{i=0}^{e-1}q^i-\left[\left(k(q+1)+h_0\right)q+1\right]\sum_{i=0}^{(e-2)/2}q^{2i}\\
=& kq\sum_{i=0}^{e-1}q^i+h_0 \sum_{i=0}^{e-1}q^i -kq^2\sum_{i=0}^{(e-2)/2}q^{2i} -kq\sum_{i=0}^{(e-2)/2}q^{2i} \\
 &-h_0q\sum_{i=0}^{(e-2)/2}q^{2i}-\sum_{i=0}^{(e-2)/2}q^{2i}\\
=& k\left(\sum_{i=1}^{e}q^i-\sum_{i=1}^{e/2}q^{2i}-\sum_{i=0}^{(e-2)/2}q^{2i+1}\right) +h_0\left(\sum_{i=0}^{e-1}q^i-\sum_{i=0}^{(e-2)/2}q^{2i+1}\right)-\sum_{i=0}^{(e-2)/2}q^{2i}\\
=&(h_0-1) \sum_{i=0}^{(e-2)/2}q^{2i},
\end{align*}

\noindent
so $0 \leq A_{hq+h_0} \leq N$, and thus $S_N=\{A_{kq+h_0}\}$. By Lucas' Theorem and ~\Cref{binom}(i),  we conclude that
\begin{align*}
    \sum_{A \in S_N} \binom{N}{A}a^{N-A} = \binom{N}{A_{kq+h_0}}a^{N-A_{kq+h_0}} \ne 0.
\end{align*}
Therefore $f(x)$ does not permute $\mathbb F_{q^e}$. Since $q-1\mid N$, we use~\Cref{rlargerindex} to obtain the desired result when $r\geq \ell$. 
\end{proof}

We now show that if $q$ is a power of an odd prime $p$ and $f(x)$  permutes $\fqe$, then $p\mid h$.

\begin{prop}
\label{sth}
Let $f(x) = x^r(x^{q-1} + a)\in\fqe[x]$ with $e\geq 2$, $q$ a power of an odd prime $p$, and $a\neq 0$, 
and let $\ell=q^{e-1}+\cdots+q+1$. If $r \pmod{\ell}= hq+1$  and $p \nmid h$, then $f(x)$ does not permute $\mathbb{F}_{q^e}$.
\end{prop}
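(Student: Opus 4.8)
The plan is to follow the template of \Cref{rth} and \Cref{qplusone4}, now using \Cref{binom}(ii) to read off the residue of $h$ modulo $p$. The key simplification is that one well-chosen modulus can do everything at once: if I can produce a single $N\in\{1,\dots,q^e-2\}$ with $q-1\mid N$ for which the sum in \eqref{sum} is nonzero, then by \Cref{MPWbin} the binomial $x^{s}(x^{q-1}+a)$ with $s=r\bmod\ell=hq+1$ fails to permute $\fqe$, and by \Cref{rlargerindex} the same $N$ forces every $r\equiv s\pmod\ell$ to fail as well. So it suffices to treat the representative $s=hq+1$ with $1\le s\le\ell$ and $p\nmid h$, and the entire task is to exhibit one such $N$.

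The construction I would use takes $N$ divisible by $q-1$ whose $q$-adic digits are all equal to $q-1$ except for a single distinguished position carrying the digit $q-2$ (for $q>3$ a second digit must be lowered to preserve $q-1\mid N$, while the lowest digit is kept equal to $q-1$ so that position $0$ never obstructs the computation). By \eqref{SNr} every element of $S_N$ has the form $A_j=j\ell-rN/(q-1)$, and \Cref{count} bounds $|S_N|$; a direct estimate on the admissible range of $j$ is meant to pin $S_N$ down to a single index $A$. Using $q^e\equiv1\pmod\ell$ together with $q\cdot\frac{q^{e-1}-1}{q-1}\equiv-1\pmod\ell$, I would then evaluate $A\equiv -rN/(q-1)\pmod\ell$ explicitly as a function of $h$ and extract its $q$-adic digit in the distinguished position.

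The detection step is where \Cref{binom}(ii) enters. By \Cref{Lucas} the single term $\binom{N}{A}a^{N-A}$ is nonzero modulo $p$ precisely when $A$ is dominated by $N$ in every ordinary digit and, in the distinguished position where $N$ has digit $q-2$, the digit $k$ of $A$ satisfies $\binom{q-2}{k}\not\equiv0$, i.e.\ $k\not\equiv p-1\pmod p$. The construction is arranged so that this digit is congruent to $h-1$ modulo $p$; hence $\binom{q-2}{k}\equiv0$ exactly when $p\mid h$, and the term survives exactly when $p\nmid h$. In that case the sum in \eqref{sum} is nonzero and $f(x)$ does not permute $\fqe$. (For $q=p$ this reduces to ordinary base-$p$ digit domination, whereas for a proper prime power $q=p^{m}$ the refined criterion of \Cref{binom}(ii) is exactly what is needed.)

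The hard part will be the simultaneous balancing of the constraints on $N$: it must be divisible by $q-1$, yield $|S_N|=1$, leave $A$ digit-wise below $N$ away from the distinguished position, and make the distinguished digit of $A$ equal to $h-1$ modulo $p$. Because $h$ can be as large as $\frac{q^{e-1}-1}{q-1}$, the admissible index $j$—and plausibly the position of the $q-2$ digit, hence $N$ itself—will have to depend on $h$, exactly as the index $kq+h_0$ depended on $h$ in \Cref{qplusone4}; carrying out the resulting digit computation for each residue class of $h$ is the delicate point. A secondary concern is to keep $|S_N|=1$ throughout: a two-element $S_N$ produces an $a$-dependent sum that might cancel for special values of $a$, so insisting on a single surviving term is what makes the nonvanishing uniform in $a$.
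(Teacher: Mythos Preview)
Your overall strategy matches the paper's: exhibit a single $N$ with $q-1\mid N$ and $|S_N|=1$, then invoke \Cref{Lucas} and \Cref{binom}(ii) so that the lone term $\binom{N}{A}a^{N-A}$ survives exactly when $p\nmid h$, and finish with \Cref{rlargerindex}. The gap is that you never actually construct such an $N$, and the shape you sketch cannot work as written. If all $q$-adic digits of $N$ equal $q-1$ except one equal to $q-2$, the digit sum is $\equiv -1\pmod{q-1}$, so $q-1\nmid N$; ``lowering a second digit'' gives digit sum $\equiv -2\pmod{q-1}$, still nonzero for $q>3$. Your parenthetical fix is thus arithmetically wrong, and it leads you to expect an $h$-dependent $N$ and a delicate case analysis that is in fact unnecessary.

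The paper takes the single choice $N=2q^{e-1}-2$, independent of $h$, with $q$-adic expansion
\[
N=1\cdot q^{e-1}+(q-1)q^{e-2}+\cdots+(q-1)q+(q-2),
\]
so the digit sum is $(e-1)(q-1)$ and $q-1\mid N$. Since $N/\ell<2$, \Cref{count} gives $|S_N|\le 2$; a direct computation shows $A_{2h+1}=q^{e-1}-\sum_{i=0}^{e-2}q^i+2h\in[0,N]$ while $A_{2h},A_{2h+2}\notin[0,N]$, so $S_N=\{A_{2h+1}\}$. Note the distinguished digit $q-2$ sits at position $0$, not away from it as you proposed: the digits $q-1$ are harmless by \Cref{binom}(i), and it is the bottom digit that does the detection. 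Since $A_{2h+1}\equiv 2h-1\pmod p$, \Cref{binom}(ii) gives $\binom{q-2}{a_0}\equiv 0\pmod p$ iff $2h\equiv 0\pmod p$, i.e.\ iff $p\mid h$ because $p$ is odd. This is precisely where the hypothesis ``$q$ a power of an odd prime'' is used; your proposed mechanism via a digit $\equiv h-1\pmod p$ would not invoke it, which is a sign the intended construction had not yet been found.
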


\begin{proof} We start by assuming that $r < \ell$ and $r = hq+1$,  where $0 \leq h < \sum_{i=0}^{e-2}q^i$ and $p \nmid h$. Let $N = 2q^{e-1} - 2$. By~\Cref{count}, since $N/\ell=2(q^{e-1}-1)/(\sum_{i=0}^{e-1}q^i) <2$,   we have $|S_N|\leq 2$. We claim that $A_{2h+1}$ is the only element in $S_N$. To show this, we compute
\begin{align*}A_{2h+1} &=(2h+1)\sum_{i=0}^{e-1}q^i - 2(hq+1)\sum_{i=0}^{e-2}q^i \\
&= q^{e-1} -\sum_{i=0}^{e-2}q^i + 2h.
\end{align*}
\noindent
Since $ h \geq 0$, we obtain that  $$A_{2h+1} \geq q^{e-1} -\sum_{i=0}^{e-2}q^i= q^{e-1} -\frac{q^{e-1}-1}{q-1}  >0.$$
\noindent
On the other hand, the condition $h \leq \sum_{i=0}^{e-2}q^i -1$ implies that  
\begin{align*}
A_{2h+1} &\leq q^{e-1}-\sum_{i=0}^{e-2}q^i +2\left(\sum_{i=0}^{e-2}q^i -1\right)\\
&= q^{e-1} + \sum_{i=0}^{e-2}q^i -2\\
&< q^{e-1}+ q^{e-1} -2=N.
\end{align*}
\noindent
Hence $A_{2h+1} \in S_N$. We observe that $A_{j+ k}=A_{j}+k\ell$ for any integers $j$ and $k$, so it remains to show that $A_{2h}$ and $A_{2h+2}$ are not in $[0,N]$. In fact,  
\begin{align*}
    A_{2h}& =A_{2h+1}-\ell\\
& =q^{e-1} -\sum_{i=0}^{e-2}q^i + 2h -\sum_{i=0}^{e-1}q^i \\
& < q^{e-1} -\sum_{i=0}^{e-2}q^i + 2\left(\sum_{i=0}^{e-2}q^i -1\right) -\sum_{i=0}^{e-1}q^i=-2
 \end{align*}
and
$$A_{2h+2}=A_{2h+1}+\ell=q^{e-1} -\sum_{i=0}^{e-2}q^i + 2h +\sum_{i=0}^{e-1}q^i = 2q^{e-1}  + 2h >N.$$ 
Therefore $S_N=\{A_{2h+1}\}$.

Write  $N=q^{e-1}+(q-1)q^{e-2}+\cdots+(q-1)q+(q-2)$ and $A_{2h+1}=a_{e-1}q^{e-1}+\cdots+a_1q+a_0$ in base $q$. Since $p$ is an odd prime and $p \nmid h$, it follows that $A_{2h+1} \equiv 2h - 1 \not\equiv p-1 \pmod{p}$ so that $\binom{q-2}{a_0} \not\equiv 0 \pmod{p}$.  By Lucas' Theorem and~\Cref{binom}, we get 
\begin{align*}
\sum_{A \in S_N} \binom{N}{A}a^{N-A} & =\binom{N}{A_{2h+1}}a^{N-A_{2h+1}} \\
& =\binom{1}{a_{e-1}}\binom{q-1}{a_{e-2}}\cdots\binom{q-1}{a_1}\binom{q-2}{a_0}a^{N-A_{2h+1}}\neq 0.
\end{align*}
Therefore, if $r<\ell$, the polynomial $f(x)$ does not permute $\mathbb F_{q^e}$. Since $q-1\mid N$, we can use~\Cref{rlargerindex} in the case $r\geq \ell$ and conclude the proof. 
\end{proof}


We now consider  $r = \left[h\left(\sum_{i=0}^{e-3}q^i\right)\right]q+1=    h \left(\sum_{i=1}^{e-2}q^i \right) + 1$ for $1 \leq h \leq q-1$. In this case, we obtain a nonexistence result by considering $N = 2q^{e-1}-q^{e-2}-1$. The next result provides the exact elements that belong to $S_N$ depending on $h$.

\begin{lemma}\label{Sn1}
Let  $f(x) = x^r(x^{q-1} + a)\in\mathbb{F}_{q^e}[x]$ with $e,q \geq 3$ and $a\neq 0$, and let $\ell=q^{e-1}+\cdots+q+1$.
 Suppose $r= h (\ell-q^{e-1}-1)  + 1$ for some $1 \leq h \leq q-1$. For $N = 2q^{e-1}-q^{e-2}-1$ and  $j=2hq^{e-3} + h\sum_{i=0}^{e-4}q^i$, we have
\begin{enumerate}[\normalfont(i)]
\item $A_j= (h-2)q^{e-2}+(2h-1)q^{e-3}+(h-1)\displaystyle\sum_{i=0}^{e-4}q^i$,
\item 
$A_{j+1}= q^{e-1}+(h-1)q^{e-2}+2hq^{e-3}+h\displaystyle\sum_{i=0}^{e-4}
q^i$,
\item $S_N=
\begin{cases}
\{A_{j+1}\} &\text{ if } h=1 \\
\{ A_j,A_{j+1}\} & \text{ if } 2\leq h \leq q-2 \\
\{A_{j}\} &\text{ if } h=q-1.\\
\end{cases}$
\end{enumerate}
\end{lemma}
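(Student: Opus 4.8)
The plan is to compute everything directly from the description \eqref{SNr}, in which $A_j=j\ell-rN/(q-1)$. First I would record the two facts that make the bookkeeping manageable. Since $N=2q^{e-1}-q^{e-2}-1\equiv 0\pmod{q-1}$, the quantity $rN/(q-1)$ is an integer, and a short division using $N=(q^{e-1}-1)+q^{e-2}(q-1)$ gives the clean form $N/(q-1)=2q^{e-2}+\sum_{i=0}^{e-3}q^i$. Moreover, directly from $A_j=j\ell-rN/(q-1)$, consecutive values satisfy $A_{j+1}=A_j+\ell$ (this is the observation $A_{j+k}=A_j+k\ell$ already used in the proof of \Cref{sth}), so once part (i) is in hand, part (ii) is free.

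For part (i) I would substitute $r=h(\ell-q^{e-1}-1)+1$ and the given $j=2hq^{e-3}+h\sum_{i=0}^{e-4}q^i$ into $A_j=j\ell-r\,N/(q-1)$, expand the products using the geometric-sum identity $(q-1)\sum_{i=0}^{m-1}q^i=q^m-1$, and collect the outcome into base-$q$ blocks to obtain the stated expression. Part (ii) then costs nothing: since $A_{j+1}=A_j+\ell$ and $\ell=\sum_{i=0}^{e-1}q^i$, adding $\ell$ to the digits of (i) raises the coefficients to $q^{e-1}+(h-1)q^{e-2}+2hq^{e-3}+h\sum_{i=0}^{e-4}q^i$, which is exactly the claimed formula.

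For part (iii) I would first invoke \Cref{count}: because $\ell\le N<2\ell$ for $e,q\ge 3$ (so that $\lfloor N/\ell\rfloor=1$), we get $|S_N|\le 2$. As the $A_i$ form an increasing arithmetic progression with common difference $\ell$, the at most two elements of $S_N$ are consecutive, and I would confine the window to $\{A_j,A_{j+1}\}$ by checking $A_{j-1}<0$ (equivalently $A_j<\ell$) and $A_{j+2}>N$; the latter is immediate since $A_{j+1}\ge q^{e-1}$ forces $A_{j+2}=A_{j+1}+\ell>2q^{e-1}>N$. The trichotomy then follows from three short inequality checks using the explicit formulas: for $h=1$ one has $A_j=-q^{e-3}(q-1)<0$ while $0\le A_{j+1}\le N$; for $2\le h\le q-2$ both $A_j\ge 0$ and $A_{j+1}\le N$; and for $h=q-1$ one computes, using the telescoping $(q-1)\sum_{i=0}^{e-4}q^i=q^{e-3}-1$, that $A_{j+1}-N=q^{e-3}(q-1)>0$ while $A_j\in[0,N]$.

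I expect the main obstacle to be the computation in (i): multiplying $r$ by $N/(q-1)$ and subtracting from $j\ell$ requires careful tracking of base-$q$ carries across the $q^{e-1},q^{e-2},q^{e-3}$ and lower blocks. The subsequent inequalities are routine, but the two boundary regimes $h=1$ and $h=q-1$ demand sharp rather than crude estimates, since there the borderline value ($A_j$ in the first case, $A_{j+1}$ in the second) falls only just outside $[0,N]$; the interior range $2\le h\le q-2$ carries genuine slack and is easy.
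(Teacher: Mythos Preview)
Your proposal is correct and follows essentially the same route as the paper: direct substitution to compute $A_j$, the shift $A_{j+1}=A_j+\ell$ for (ii), \Cref{count} to bound $|S_N|\le 2$, and then a three-case inequality check for (iii). The only organizational difference is that you propose to verify $A_{j-1}<0$ and $A_{j+2}>N$ once for all $h$ before splitting into cases, whereas the paper folds those boundary checks into the individual cases; both are straightforward and yield the same bounds (and your computation $A_{j+1}-N=q^{e-3}(q-1)$ for $h=q-1$ matches the paper's $A_{j+1}=2q^{e-1}-q^{e-3}-1$).
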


\begin{proof}
We start by showing (i) and (ii). By using that $N/(q-1) = 2q^{e-2}+\sum_{i=0}^{e-3}
q^i$,  we  compute
\begin{align*}
A_j =&\displaystyle\left(2hq^{e-3} + h\sum_{i=0}^{e-4}
q^i\right)\left(\sum_{i=0}^{e-1}
q^i\right)- \left(h\left(\sum_{i=1}^{e-2}
q^i\right)+1\right)\left(2q^{e-2}+\sum_{i=0}^{e-3}
q^i\right)\\
= &2h\sum_{i=e-3}^{2e-4}q^i + h \sum_{i=0}^{e-4}\sum_{j=0}^{e-1}q^{i+j}
-2h \sum_{i=e-1}^{2e-4}q^i - h \sum_{i=1}^{e-2}\sum_{j=0}^{e-3}q^{i+j}
-2q^{e-2}-\sum_{i=0}^{e-3}q^i \\
= &2hq^{e-3} + 2(h-1)q^{e-2}-\sum_{i=0}^{e-3}q^i + h \sum_{i=0}^{e-1}q^i \\
 &+ h \sum_{i=e}^{2e-5}q^{i} +  h \sum_{i=e-1}^{2e-6}q^{i}-h \sum_{i=e-2}^{2e-5}q^i - h \sum_{i=e-3}^{2e-6}q^{i}\\
= &2hq^{e-3} + 2(h-1)q^{e-2}-\sum_{i=0}^{e-3}q^i + h \sum_{i=0}^{e-1}q^i -  hq^{e-1} - 2hq^{e-2} -hq^{e-3}\\
= & (h-2)q^{e-2}+hq^{e-3}+ (h-1)\sum_{i=0}^{e-3}q^i\\
= & (h-2)q^{e-2}+(2h-1)q^{e-3}+ (h-1)\sum_{i=0}^{e-4}q^i.
\end{align*}

We also compute
\begin{align*}
A_{j+1} = & A_j + \ell = A_j+ \sum_{i=0}^{e-1}q^i  \\
= & (h-2)q^{e-2}+(2h-1)q^{e-3}+ (h-1)\sum_{i=0}^{e-4}q^i + \sum_{i=0}^{e-1}q^i\\
= & q^{e-1}+(h-1)q^{e-2}+2hq^{e-3}+h\sum_{i=0}^{e-4} q^i.
\end{align*}

We now show (iii). By~\Cref{count}, since $\ell = \sum_{i=0}^{e-1}q^i$ and $N/\ell=(2q^{e-1}-q^{e-2}-1)/\ell<2$, we have $|S_N|\leq 2$. The elements in $S_N$ satisfy $A_{i\pm 1} = A_i\pm \ell$; in particular,  $A_i<A_k$ whenever $i<k$. Next we describe the elements in $S_N$ depending on $h$.

\

{\underline{Case 1}}: $h=1$ 

Since $A_j=-q^{e-2}+q^{e-3}$ is negative, it does not belong to $S_N$.
Let us consider $A_{j+1}$. We have $A_{j+1}=q^{e-1}+2q^{e-3}+\sum_{i=0}^{e-4} q^i \geq 0$ and
\begin{align*}
 & A_{j+1}  \leq  2q^{e-1}-q^{e-2}-1 =N\\
 \Longleftrightarrow \ & 2q^{e-3} + \sum_{i=0}^{e-4} q^i \leq q^{e-1} -q^{e-2}-1 \\
 \Longleftrightarrow \ & 2q^{e-3} + \sum_{i=0}^{e-4} q^i \leq (q-1)\sum_{i=0}^{e-2}q^i -q^{e-2} \\
 \Longleftrightarrow \ & 2q^{e-3} + \sum_{i=0}^{e-4} q^i \leq (q-2)q^{e-2}+  (q-1)\sum_{i=0}^{e-3}q^i, \\
\end{align*}
\noindent
which is true,  since $2q^{e-3} \leq (q-2)q^{e-2}$ and $\sum_{i=0}^{e-4} q^i \leq (q-1)\sum_{i=0}^{e-3}q^i$, for $q\geq 3$. Hence   $A_{j+1}\in S_N$. Since
$$
 A_{j+2}  =A_{j+1}+\ell
 =2q^{e-1}+q^{e-2}+3q^{e-3}+2\sum_{i=0}^{e-4} q^i > N,
$$
we conclude that $S_N=\{A_{j+1}\}$.

\

{\underline{Case 2}}: $2\leq h\leq q-2$

We have
\begin{align*}
    0&\leq 3q^{e-3} + \sum_{i=0}^{e-4}q^i \\
    &\leq A_j <A_{j+1}\\
    &\leq 2q^{e-1} -q^{e-2} -3q^{e-3} -\sum_{i=1}^{e-4}q^i -2 \\
    &\leq  2q^{e-1}-q^{e-2}-1 = N,
\end{align*}
which shows that $S_N=\{A_j,A_{j+1}\}$.

\

{\underline{Case 3}}: $h=q-1$ 

In this case, we have
$A_j= q^{e-1}-q^{e-2}-2q^{e-3}-\sum_{i=0}^{e-4}q^i-1$. So
\begin{align*}
A_{j-1} &=A_j-\sum_{i=0}^{e-1}q^i=-2q^{e-2}-3q^{e-3}-2\sum_{i=0}^{e-4}q^i-1<0,\\ 
0 & \leq A_j \leq 2q^{e-1}-q^{e-2}-1= N,\quad\text{ and }\\
A_{j+1}& = 2q^{e-1}-q^{e-3}-1 > 2q^{e-1}-q^{e-2}-1=N.
\end{align*}
Hence $S_N=\{A_j\}$.
\end{proof}

\begin{prop} \label{generalLemma}
Let  $f(x) = x^r(x^{q-1} + a)\in\mathbb{F}_{q^e}[x]$ with $e,q\geq 3$, $q$ a power of a prime $p$, $a\neq 0$,  and  $(-a)^{\ell} \ne 1$, and let $\ell=q^{e-1}+\cdots+q+1$.  Suppose that $r \pmod{\ell} = h \left(\ell-q^{e-1}-1 \right) + 1$ for some $1 \leq h \leq q-1$. Then $f(x)$ does not permute $\mathbb{F}_{q^e}$. 
\end{prop}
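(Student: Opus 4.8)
The plan is to invoke the permutation criterion of~\Cref{MPWbin} together with the explicit computation of $S_N$ carried out in~\Cref{Sn1}, and to exhibit a single value of $N$ for which the sum $\sum_{A\in S_N}\binom{N}{A}a^{N-A}$ is nonzero, thereby ruling out the permutation property. First I would reduce to the case $1\leq r<\ell$: by~\Cref{rlargerindex}, once I establish that the relevant sum is nonzero for an $N$ with $q-1\mid N$ when $r<\ell$, the case $r\geq\ell$ follows immediately, so I set $r=h(\ell-q^{e-1}-1)+1$ with $1\leq h\leq q-1$. The distinguished value is $N=2q^{e-1}-q^{e-2}-1$, exactly the one analyzed in~\Cref{Sn1}; note $q-1\mid N$ since $N=(q-1)(2q^{e-2}+\sum_{i=0}^{e-3}q^i)$, so~\Cref{rlargerindex} does apply. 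I would split the argument along the same three cases for $h$ that appear in~\Cref{Sn1}.

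For the two boundary cases $h=1$ and $h=q-1$, \Cref{Sn1}(iii) gives $|S_N|=1$, so the sum reduces to a single term $\binom{N}{A}a^{N-A}$. Writing $N=q^{e-1}+(q-1)q^{e-2}+\cdots+(q-1)q+(q-1)$ in base $q$ and applying Lucas' Theorem, the binomial coefficient factors into digit-wise coefficients; since every digit of $N$ is $q-1$ except the leading one, each factor is of the form $\binom{q-1}{a_i}$ or $\binom{1}{a_{e-1}}$, which is nonzero modulo $p$ by~\Cref{binom}(i) (checking that the top digit $a_{e-1}$ of $A_{j+1}$, resp.\ $A_j$, is $0$ or $1$). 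Hence the sum is nonzero and $f(x)$ does not permute $\fqe$.

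The genuinely hard case is $2\leq h\leq q-2$, where $|S_N|=2$ and the sum is $\binom{N}{A_j}a^{N-A_j}+\binom{N}{A_{j+1}}a^{N-A_{j+1}}$. Here the two terms do not obviously combine, and a cancellation modulo $p$ is a priori possible, so this is the main obstacle. My plan is to factor out $a^{N-A_{j+1}}$ and reduce the question to whether $\binom{N}{A_j}a^{A_{j+1}-A_j}+\binom{N}{A_{j+1}}=\binom{N}{A_j}a^{\ell}+\binom{N}{A_{j+1}}$ can vanish, using $A_{j+1}-A_j=\ell$. I would then compute both $\binom{N}{A_j}$ and $\binom{N}{A_{j+1}}$ explicitly modulo $p$ via Lucas' Theorem from the base-$q$ digits given in~\Cref{Sn1}(i)--(ii), obtaining each as an explicit product of small binomial coefficients in $h$, $q$, and $p$. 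The key structural point I expect to exploit is that the hypothesis $(-a)^\ell\neq 1$ forbids $a^\ell$ from taking the one specific value that would force cancellation; concretely, if the two Lucas products are both nonzero modulo $p$, then the sum vanishes only when $a^\ell$ equals a fixed element of $\fqe$ determined by their ratio, and I would show this forced value is $(-1)^{\,\ell}$ times a unit whose identification contradicts $(-a)^\ell\neq 1$. Verifying that the digit expansions of $A_j$ and $A_{j+1}$ align with the digits of $N$ so that the Lucas products are indeed nonzero, and pinning down the exact forced value of $a^\ell$, is the delicate computational heart of the argument.
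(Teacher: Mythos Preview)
Your overall scaffold (use $N=2q^{e-1}-q^{e-2}-1$, invoke~\Cref{Sn1}, treat the boundary cases $h\in\{1,q-1\}$ separately, and appeal to~\Cref{rlargerindex} for $r\geq\ell$) matches the paper. However, two points derail the plan for $2\leq h\leq q-2$.

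First, your base-$q$ expansion of $N$ is wrong: the digit in position $e-2$ is $q-2$, not $q-1$; the correct expansion is $N=q^{e-1}+(q-2)q^{e-2}+(q-1)\sum_{i=0}^{e-3}q^i$. That $(q-2)$ digit is precisely the mechanism the paper exploits.

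Second, and more seriously, your strategy of assuming \emph{both} Lucas products are nonzero and then arguing that the forced value of $a^\ell$ contradicts $(-a)^\ell\neq1$ does not work. The ratio $-\binom{N}{A_{j+1}}/\binom{N}{A_j}$ is in general an arbitrary element of $\fp^*$, not $(-1)^\ell$; for instance with $q=p=7$, $e=3$, $h=2$ one computes the sum vanishes when $a^\ell=5$, which is perfectly compatible with $(-a)^\ell\neq1$. The paper avoids this by a preliminary reduction you omitted: it first applies~\Cref{sth} (with a \emph{different} $N$) to conclude that for odd $q$ one may assume $p\mid h$. With $p\mid h$ in hand, the $(q-2)$ digit of $N$ together with~\Cref{binom}(ii) forces \emph{exactly one} of $\binom{q-2}{h-2}$, $\binom{q-2}{h-1}$ to vanish modulo $p$, collapsing the sum to a single nonzero term. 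For even $q$ the same parity dichotomy does the job, except in the single subcase $h=q/2$ where both terms survive with \emph{equal} coefficients, and only there is $(-a)^\ell\neq1$ actually used (to rule out $a^\ell+1=0$, $\ell$ odd). So the hypothesis $(-a)^\ell\neq1$ enters only in that one subcase, not globally as you propose.
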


\begin{proof} By~\Cref{sth}, if $q$ is odd and $p\nmid h \left(\sum_{i=0}^{e-3}q^i\right)$, then $f(x)$ does not permute $\mathbb{F}_{q^e}$. Hence  $p\mid h$, if $q$ is odd.  Assume $r < \ell$. We consider the following three cases: $h=1$, $2 \leq h \leq q-2$, and $h=q-1$. Let $N=2q^{e-1}-q^{e-2}-1$ whose $q$-expansion is given by  $q^{e-1}+(q-2)q^{e-2}+(q-1)\sum_{i=0}^{e-3}q^{i}$. \Cref{Sn1} provides the exact elements in $S_N$ in each case.

\
 
{\underline{Case 1}}:  $h=1$ 

\

In this case,  $S_N=\{A_{j+1}\}$ and $A_{j+1}=q^{e-1}+2q^{e-3}+\sum_{i=0}^{e-4} q^i$ is the $q$-adic expansion of $A_{j+1}$. We compute 
\begin{align*}
 \sum_{A \in S_N}\binom{N}{A}a^{N-A}
 & = \binom{N}{A_{j+1}}a^{N-A_{j+1}} \\
 & = \binom{1}{1} \binom{q-2}{0} \binom{q-1}{2} \binom{q-1}{1}^{e-3}a^{N-A_{j+1}},
\end{align*}
which is not zero by~\Cref{binom}(i).

\

{\underline{Case 2}}: $2 \leq h \leq q-2$ 

\

We have $S_N=\{A_j,A_{j+1}\}$. 

\

{\underline{Subcase 2.1}}:  $2\leq h < \left\lfloor q/2\right\rfloor$ 

\

In this case,
\begin{align*}
    A_j&=(h-2)q^{e-2}+(2h-1)q^{e-3}+ (h-1)\sum_{i=0}^{e-4}q^i\quad\text{ and }\\  A_{j+1}&=q^{e-1}+(h-1)q^{e-2}+2hq^{e-3}+h\sum_{i=0}^{e-4} q^i,
    \end{align*}
so
\begin{align*} 
\sum_{A \in S_N}\binom{N}{A}a^{N-A} & = \binom{N}{A_j}a^{N-A_j} + \binom{N}{A_{j+1}}a^{N-A_{j+1}}\\  
    & = \binom{1}{0} \binom{q-2}{h-2} \binom{q-1}{2h-1} \binom{q-1}{h-1}^{e-3}a^{N-A_j}\\
    &\quad + \binom{1}{1} \binom{q-2}{h-1} \binom{q-1}{2h} \binom{q-1}{h}^{e-3}a^{N-A_{j+1}}.\end{align*}

 By~\Cref{binom}(ii), when $q$ is even, $\binom{q-2}{k}= 0$ if and only if  $k$ is odd.  Also, if $q$ is odd, $p\mid h$ and we get $\binom{q-2}{h-1} \equiv 0 \pmod{p}$. Therefore
$$
      \sum_{A \in S_N}\binom{N}{A}a^{N-A} 
=\begin{cases}
    \displaystyle\binom{q-2}{h-1} \binom{q-1}{2h} \binom{q-1}{h}^{e-3}a^{N-A_{j+1}} \ne 0, & \text{ if } h \text{ is odd and } q \text{ is even} \\ 
    \displaystyle\binom{q-2}{h-2} \binom{q-1}{2h-1} \binom{q-1}{h-1}^{e-3}a^{N-A_j} \ne 0, & \text{ otherwise.}
    \end{cases}
$$
\

{\underline{Subcase 2.2}}: $q$ is even and $h = q/2$ 

\
    
    In this case, the $q$-adic expansions of the two elements in $S_N$ are 
    \begin{align*}
    A_j &= \left(\dfrac{q}{2} -2 \right)q^{e-2} + (q-1)q^{e-3} + \left(\dfrac{q}{2}-1 \right) \sum_{i=0}^{e-4}q^i  \quad\text{ and }\\
    A_{j+1} &= q^{e-1} + \left(\dfrac{q}{2}\right)q^{e-2} + \dfrac{q}{2} \sum_{i=0}^{e-4}q^i.
    \end{align*}
    Consequently, 
\begin{align*} 
    \sum_{A \in S_N}\binom{N}{A}a^{N-A} & =\binom{N}{A_j}a^{N-A_j} + \binom{N}{A_{j+1}}a^{N-A_{j+1}} \\
    & = \binom{1}{0} \binom{q-2}{q/2 -2} \binom{q-1}{q-1} \binom{q-1}{q/2 -1}^{e-3}a^{N-A_{j+1}+\ell}\\
    &\quad + \binom{1}{1} \binom{q-2}{q/2} \binom{q-1}{0} \binom{q-1}{q/2}^{e-3}a^{N-A_{j+1}} \\
 & = \binom{q-2}{q/2} \binom{q-1}{q/2}^{e-3}a^{N-A_{j+1}} \left(a^{\ell}  + 1 \right).
\end{align*}
Since $\ell$ is odd and $(-a)^{\ell}\neq 1$, we have $a^{\ell}  + 1 \ \neq 0$.  We conclude that the sum is not zero by applying~\Cref{binom}(i) and (ii).

\

{\underline{Subcase 2.3}}: $\lfloor q/2 \rfloor < h \leq q-2$

\

Write $h = \lfloor q/2 \rfloor + h_0$ with $h_0 >0$. 

If $q$ is even, then $h=q/2+h_0$, $0 < h_0 \leq (q-4)/2$ and  the $q$-adic expansions of $A_j$ and $A_{j+1}$ are
\begin{align*}
    A_j &= (h-1)q^{e-2} + (2h_0-1)q^{e-3} + \sum_{k=0}^{e-4}(h-1)q^k \quad\text{ and }\\
A_{j+1} &= q^{e-1}+hq^{e-2} + 2h_0q^{e-3} + h\sum_{k=0}^{e-4}q^k.
\end{align*}
Thus
\begin{align*} 
    \sum_{A \in S_N}\binom{N}{A}a^{N-A} & = \binom{N}{A_j}a^{N-A_j} + \binom{N}{A_{j+1}}a^{N-A_{j+1}}  \\
    & = \binom{1}{0} \binom{q-2}{h-1} \binom{q-1}{2h_0-1} \binom{q-1}{h-1}^{e-3}a^{N-A_j}\\
    &\quad + \binom{1}{1} \binom{q-2}{h} \binom{q-1}{2h_0} \binom{q-1}{h}^{e-3}a^{N-A_{j+1}}\\
     & =
    \begin{cases}
    \displaystyle\binom{q-2}{h} \binom{q-1}{2h_0} \binom{q-1}{h}^{e-3}a^{N-A_{j+1}} \ne 0, & \text{ if } h \text{ even }  \\ 
    \displaystyle\binom{q-2}{h-1} \binom{q-1}{2h_0-1} \binom{q-1}{h-1}^{e-3}a^{N-A_j} \ne 0, & \text{ otherwise}.
    \end{cases}
\end{align*}

If $q$ is odd, then $h=(q-1)/2+h_0$ with $0 < h_0 \leq (q-3)/2$, and the $q$-adic expansions of $A_j$ and $A_{j+1}$ are 
\begin{align*}
    A_j &= (h-1)q^{e-2} + (2h_0-2)q^{e-3} + (h-1)\sum_{i=0}^{e-4}q^i\quad\text{ and }\\
    A_{j+1} &= q^{e-1} + hq^{e-2} + (2h_0-1)q^{e-3} + h\sum_{i=0}^{e-4}q^i.
    \end{align*}
This implies that
\begin{align*} 
    \sum_{A \in S_N}\binom{N}{A}a^{N-A} & = \binom{N}{A_j}a^{N-A_j} + \binom{N}{A_{j+1}}a^{N-A_{j+1}} \\
     & = \binom{1}{0} \binom{q-2}{h-1} \binom{q-1}{2h_0-2} \binom{q-1}{h-1}^{e-3}a^{N-A_j}\\
     &\quad + \binom{1}{1} \binom{q-2}{h} \binom{q-1}{2h_0-1} \binom{q-1}{h}^{e-3}a^{N-A_{j+1}}.
    \end{align*}
    By using~\Cref{binom}(i) and (ii), since $p\mid h$, we obtain that this sum  is
     $$ \binom{q-2}{h} \binom{q-1}{2h_0-1} \binom{q-1}{h}^{e-3}a^{N-A_{j+1}} \ne 0.$$

\

\underline{Case 3}: $h= q-1$ 

\

In this case, $S_N=\{A_j\}$ and the $q$-adic expansion of $A_j$ is $(q-2)q^{e-2}+(q-3)q^{e-3}+(q-2)\sum_{i=0}^{e-4}q^i$. Then
\begin{align*}
    \sum_{A \in S_N}\binom{N}{A}a^{N-A} = \binom{N}{A_j}a^{N-A_j} = \binom{1}{0} \binom{q-2}{q-2} \binom{q-1}{q-3} \binom{q-1}{q-2}^{e-3}a^{N-A_j},
\end{align*}
which is not zero by~\Cref{binom}(i).

\

Therefore, if $r<\ell$, the polynomial $f(x)$ does not permute $\mathbb F_{q^e}$. Since $N/(q-1)=2q^{e-2}+\sum_{i=1}^{e-3}q^i$  is an integer, we use~\Cref{rlargerindex} for the case $r\geq \ell$ and conclude the proof. 
\end{proof}

\section{Characterizations}\label{special}

Permutation binomials of the form $f(x)=x^r(x^{q-1}+a)$ over $\fqe$  were studied by Hou  when $e=2$, and by Liu when $e=3$ with $q$ odd and  $1\leq r \leq \ell$, where $\ell=q^{e-1}+\cdots + q+1$; see~\Cref{liliu}.  In this section we prove~\Cref{e2345} by applying our results from~\Cref{suff,nece} to present complete characterizations of these permutation binomials   over $\f_{q^2}$, $\f_{q^3}$, $\f_{q^4}$, $\f_{p^5}$, and $\f_{p^6}$, where $p$ is an odd prime, for all values of $r$. As we mentioned earlier, two necessary conditions are that $(-a)^{\ell}\neq 1$ and $\gcd(r,q-1)=1$. We make use of these two facts without any further reference. In addition, as in~\Cref{nece}, we show our results for $e\in\{5,6\}$ by using~\Cref{MPWbin} and Lucas' Theorem without referring to them.

\subsection{Fields $\f_{q^2}, \f_{q^3}$, and $\f_{q^4}$}\label{234}

We extend~\Cref{liliu} in the following directions: from $r\in\{1,\dots,\ell\}$ to $r\geq 1$, from $q$ odd and $e=3$ to any $q$ and $e=3$, and  from $e\in\{2,3\}$ to $e\in\{2,3,4\}$. Therefore our  characterization  provides a full description of permutation binomials $f(x) = x^r(x^{q-1} + a)$ over $\fqe$  for $e\in\{2,3,4\}$. For these values of $e$, the description of $r$ written in terms of $\ell$ turns out to be the same.

\begin{prop} \label{q234final}
Let $f(x) = x^r(x^{q-1} + a) \in \mathbb{F}_{q^e}[x]$ with $e\in\{2,3,4\}$ and  $a\neq 0$, and let $\ell=q^{e-1}+\cdots + q+1$. Then $f(x)$ permutes $\mathbb{F}_{q^e}$ if and only if  $(-a)^{\ell} \ne 1$, $\gcd(r, q-1) = 1$, and $r\pmod{\ell}\in\{ 1,\ell-q\}$.
\end{prop}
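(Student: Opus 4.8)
The plan is to prove the two implications separately, delegating the hard inequalities to the results already established. The `if' direction is immediate: under the standing hypotheses $(-a)^\ell\neq 1$ and $\gcd(r,q-1)=1$, \Cref{part}(i) guarantees that $f$ permutes $\mathbb F_{q^e}$ whenever $r\pmod\ell\in\{1,\ell-q\}$, and this holds uniformly for all $e$. So the substance is the `only if' direction. Assuming $f$ permutes, the two conditions $(-a)^\ell\neq 1$ and $\gcd(r,q-1)=1$ hold automatically, being the necessary conditions recorded in the introduction and at the start of \Cref{suff}. In particular $(-a)^\ell\neq 1$ rules out $q=2$ (there $(-a)^\ell=a^{q^e-1}=1$ for every $a\in\mathbb F_{q^e}^*$), so I may assume $q\geq 3$, and the only remaining task is to show $r\pmod\ell\in\{1,\ell-q\}$.

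The first step is to apply \Cref{rth}: since $f$ permutes, $r\pmod\ell=hq+1$ for some integer $h$, and $1\leq hq+1\leq\ell-1$ forces $0\leq h\leq\sum_{i=0}^{e-2}q^i-1$. For $e=2$ this already finishes the argument, since the bound gives $h=0$, hence $r\equiv 1\pmod\ell$; because $\ell-q=1$ in this case, this is exactly $r\pmod\ell\in\{1,\ell-q\}$. For $e=3$ and $e=4$ the two extreme values $h=0$ and $h=\sum_{i=0}^{e-2}q^i-1$ translate into $r\pmod\ell=1$ and $r\pmod\ell=\ell-q$ respectively, so the whole problem reduces to excluding the intermediate $h$.

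The mechanism for the exclusion is the identity $\ell-q^{e-1}-1=\sum_{i=1}^{e-2}q^i$, which equals $q$ for $e=3$ and $q^2+q$ for $e=4$; this lets me rewrite the parametrization $r\pmod\ell=hq+1$ of \Cref{rth} in the form $r\pmod\ell=h'(\ell-q^{e-1}-1)+1$ used by \Cref{generalLemma}. For $e=3$, an intermediate $h$ with $1\leq h\leq q-1$ gives directly $r\pmod\ell=h(\ell-q^2-1)+1$, so \Cref{generalLemma} (applicable since $e,q\geq 3$ and $(-a)^\ell\neq 1$) eliminates every such $h$, leaving $h\in\{0,q\}$. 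For $e=4$, since $e$ is even I first apply \Cref{qplusone4}, which forces $q+1\mid h$; writing $h=k(q+1)$ with $0\leq k\leq q$, the value becomes $r\pmod\ell=k(q^2+q)+1=k(\ell-q^3-1)+1$, and \Cref{generalLemma} now eliminates $1\leq k\leq q-1$, leaving $k\in\{0,q\}$. In either case the survivors are precisely $r\pmod\ell\in\{1,\ell-q\}$, completing the proof.

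The argument is therefore a routing problem rather than a computation, and I expect the only real obstacle to be the bookkeeping: matching the two parametrizations $hq+1$ and $h'(\ell-q^{e-1}-1)+1$, and checking that---after the extra divisibility constraint supplied by \Cref{qplusone4} in the even case $e=4$---exactly the two extreme parameter values escape \Cref{generalLemma}. Since this is a finite, transparent verification for each of $e=2,3,4$, no new estimates are needed beyond those already proved in \Cref{nece}.
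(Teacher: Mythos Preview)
Your proof is correct and follows essentially the same approach as the paper: invoke \Cref{part}(i) for the `if' direction, then for the converse apply \Cref{rth} to write $r\pmod\ell=hq+1$, use \Cref{qplusone4} when $e=4$ to force $q+1\mid h$, and finally apply \Cref{generalLemma} to eliminate all intermediate parameter values. The only minor additions in your write-up are the explicit exclusion of $q=2$ and the identification $\ell-q^{e-1}-1=\sum_{i=1}^{e-2}q^i$, both of which the paper treats implicitly.
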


\begin{proof}
By~\Cref{part}, if $(-a)^{\ell} \ne 1$, $r \pmod{\ell} \in \{1, \ell-q\}$, and $\gcd(r, q-1) = 1$,   then $f(x)$ permutes $\mathbb{F}_{q^e}$. Conversely, suppose that $f(x)$ permutes $\mathbb{F}_{q^e}$ for $e \in \{2,3, 4\}$. By~\Cref{rth}, $r\pmod\ell = hq+1$ for some integer $h$.

When $e=2$, since $\ell=q+1$, we must have  $r\pmod\ell=1$. 

When $e=3$, we have  $\ell=(q+1)q+1$, so $0\leq h \leq q$. \Cref{generalLemma} implies that  $h \in \{0, q\}$, so $r \pmod{\ell} \in \{1, \ell-q\}$.

When $e=4$, we have  $\ell=(q^2+q+1)q+1$. Combining Propositions~\ref{rth} and~\ref{qplusone4} gives that $r \pmod{\ell} = k(q+1)q + 1$ with $0 \leq k \leq q$. \Cref{generalLemma} implies that $k \in \{0, q\}$, so $r \pmod{\ell} \in \{1, \ell-q\}$.
\end{proof}

For fields $\f_{q^2}$, we note that $r \pmod{\ell}=\ell-q=1$.

\subsection{Fields \texorpdfstring{$\f_{p^5}, \ p$}{Fp5, p} an odd prime} \label{5}

In~\Cref{q234final} we show that the only values of $r\pmod\ell$ such that  $x^r(x^{q-1}+a)$ permutes $\fqe$   for $e\in\{2,3,4\}$ and $a\neq 0$ are  $1$ and $\ell-q$, with  $\gcd(r,q-1)=1$. In $\f_{p^5}[x]$  there are two additional values. 

\begin{prop}\label{p5}
Let $f(x) = x^r(x^{p-1} + a) \in \mathbb{F}_{p^5}[x]$ with $p$ an odd prime and $a\neq 0$, and let $\ell=p^4+p^3+p^2+p+1$. Then $f(x)$ permutes $\mathbb{F}_{p^5}$ if and only if $(-a)^{\ell} \ne 1$, $\gcd(r, p-1) = 1$, and $r \pmod{\ell}\in\{1,  p^3+1,p^4+p^2+1, \ell-p\}$.
\end{prop}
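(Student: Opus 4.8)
The plan is to follow the same two-directional strategy used in the proof of Proposition~\ref{q234final}, specializing to $e=5$, $q=p$ an odd prime. For the `if' direction, I would invoke~\Cref{part}: parts (i) and (ii) together produce exactly the four admissible residues. Indeed, \Cref{part}(i) gives $r\pmod\ell\in\{1,\ell-q\}$, and \Cref{part}(ii), applicable since $e=5$ is odd, gives $r\pmod\ell\in\{q^{(e+1)/2}+1,\ell-q^{(e+1)/2}-q\}=\{q^3+1,\ell-q^3-q\}$. With $q=p$ this yields the set $\{1,\ p^3+1,\ \ell-p^3-p,\ \ell-p\}$, and a short computation modulo $\ell=p^4+p^3+p^2+p+1$ confirms $\ell-p^3-p\equiv p^4+p^2+1\pmod\ell$ (since $-p^3\equiv p^4+p^2+p+1$ and adding $-p$ gives $p^4+p^2+1$), matching the stated residue $p^4+p^2+1$. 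So under the hypotheses $(-a)^\ell\ne1$ and $\gcd(r,p-1)=1$, each of the four residues yields a permutation binomial.

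For the `only if' direction, I would assume $f(x)$ permutes $\mathbb F_{p^5}$ and progressively cut down the possible residues $r\pmod\ell$. First, \Cref{rth} forces $r\pmod\ell=hp+1$ for some integer $h$ with $0\le h\le p^3+p^2+p$ (since $\ell-q^{e-1}-1=p^3+p^2+p$ and $r<\ell$). Next, because $p$ is odd, \Cref{sth} forces $p\mid h$; write $h=pk$. The remaining work is to eliminate all values of $k$ except those producing the four target residues, namely $h\in\{0,\ p^2,\ p^3+p,\ p^3+p^2+p\}$ (these correspond to $r\pmod\ell=1,\ p^3+1,\ p^4+p^2+1,\ell-p$ respectively — one checks $p^2\cdot p+1=p^3+1$, and $(p^3+p)p+1=p^4+p^2+1$, and $(p^3+p^2+p)p+1=\ell-p$).

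The main obstacle will be this elimination step, since \Cref{generalLemma}, the tool that settled the $e\in\{3,4\}$ cases so cleanly, only covers the range $1\le h\le q-1$ of its own parameter and does not by itself dispose of all the intermediate multiples of $p$ arising here. I expect the argument to require choosing, for each surviving class of $h$, a suitable value of $N$ with $p-1\mid N$ and then computing $S_N$ via~\eqref{SNr} and evaluating $\sum_{A\in S_N}\binom{N}{A}a^{N-A}$ through Lucas' Theorem and~\Cref{binom}, showing it is nonzero so that~\Cref{MPWbin} rules out a permutation; \Cref{rlargerindex} then extends each nonexistence conclusion from $r<\ell$ to all $r\ge\ell$ in the same residue class. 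Concretely, I would first apply \Cref{generalLemma} with its substitution $r\pmod\ell=h'(\ell-p^4-1)+1$ to remove the band of $h'$ with $1\le h'\le p-1$ (clearing residues built from the coefficient of the $\sum_{i=1}^{e-2}q^i$ term), and then, for the $h$ not reached by these lemmas, construct tailored $N$'s — likely of the form $N=cp^4-dp^3-\cdots-1$ analogous to the choice $N=2q^{e-1}-q^{e-2}-1$ in \Cref{Sn1} — and verify the binomial-coefficient sum is nonzero using the oddness of $\ell$ together with the hypothesis $(-a)^\ell\ne1$ to handle any $a^\ell+1$ factors, exactly as in Subcase 2.2 of \Cref{generalLemma}. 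The bookkeeping of the $q$-adic digits of $A_j$ and $A_{j+1}$ for each residue class is the genuinely laborious part, but it is routine once the correct $N$ is identified.
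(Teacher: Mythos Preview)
Your outline is correct and matches the paper's strategy: the `if' direction via \Cref{part}(i),(ii), and the `only if' direction by first reducing via \Cref{rth} and \Cref{sth} to $r\pmod\ell = kp^2+1$ with $0\le k\le p^2+p+1$, then eliminating all $k\notin\{0,\,p,\,p^2+1,\,p^2+p+1\}$ by exhibiting, for each bad range, an $N$ with $p-1\mid N$ for which the sum in \Cref{MPWbin} is nonzero.

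One correction: your plan to invoke \Cref{generalLemma} contributes nothing here. When $q=p$ is an odd prime, the residues it rules out are $r\equiv h'(p^3+p^2+p)+1\pmod\ell$ with $1\le h'\le p-1$; writing this as $r=hp+1$ gives $h=h'(p^2+p+1)\equiv h'\pmod p$, so $p\nmid h$ and \Cref{sth} has already disposed of every such case. The paper accordingly does not use \Cref{generalLemma} for $e=5$. The entire elimination is done by hand with two specific choices of $N$: for the ranges $1\le k\le p-1$ and $p^2+2\le k\le p^2+p$ the paper takes $N=p^4-p^2-p+1$ (which has $|S_N|=1$), and for $p+1\le k\le p^2$ it takes $N=p^3-1$ (again $|S_N|=1$). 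In each range the unique $A_j\in S_N$ is computed explicitly, its $p$-adic digits are read off (splitting into subcases when a digit would exceed $p-1$), and \Cref{binom} shows $\binom{N}{A_j}\not\equiv 0\pmod p$. No $a^\ell+1$ factor ever appears, so the hypothesis $(-a)^\ell\ne1$ is used only for the `if' direction.
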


\begin{proof}
By~\Cref{part}, if $(-a)^{\ell} \ne 1$, $r \pmod{\ell} \in \{1, p^3+1,p^4+p^2+1, \ell-p\}$, and $\gcd(r, p-1) = 1$,
 then   $f(x)$ permutes $\mathbb{F}_{p^5}$. Conversely, let $f(x)$ be a permutation binomial over $\f_{p^5}$. \Cref{rth} and~\Cref{sth} imply that $r \pmod{\ell}$ has the form $hp^2 + 1$ for some $0 \leq h \leq p^2+p+1$. We show that we cannot have the following three cases for $h$:
 $1 \leq h \leq p-1$, $p+1 \leq h \leq p^2$, and $p^2 +2 \leq h \leq p^2+p$.

{\underline{Case 1}}: $1 \leq h \leq p-1$ 

Consider $N = p^4-p^2-p+1$ whose $p$-adic expansion is $(p-1)p^3+(p-2)p^2+(p-1)p+1$. By~\Cref{count}, since $N<\ell$, the set $S_N$ has at most one element. We compute
\begin{align*}
A_{hp} &= hp(p^4+p^3+p^2+p+1)-\dfrac{(hp^2+1)(p^4-p^2-p+1)}{p-1}\\
&= hp(p^4+p^3+p^2+p+1)-(hp^2+1)(p^3+p^2-1)\\
&=(h-1)p^3 + (2h-1)p^2 + hp + 1.  \end{align*}
Since $1\leq h\leq p-1$, we have $0 < A_{hp} \leq p^4-2p^2-p+1 <N$. Hence $A_{hp}$ is the unique element in  $S_N$.

{\underline{Subcase 1.1}}: $1 \leq h \leq (p-1)/2$

By using Lucas' Theorem and~\Cref{binom}(i) and (ii), we have
\begin{align*}
    \sum_{A \in S_N} \binom{N}{A}a^{N-A} & = \binom{N}{A_{hp}}a^{N-A_{hp}}\\
    & = \binom{p-1}{h-1} \binom{p-2}{2h-1} \binom{p-1}{h} \binom{1}{1}a^{N-A_{hp}}\neq 0.
\end{align*}

{\underline{Subcase 1.2}}: $(p-1)/2 < h \leq p-1$ 

Write $h = (p-1)/2 + h_0$ with $1\leq  h_0\leq (p-1)/2$. Then \begin{align*}
  A_{hp} & =  (h-1)p^3 + \left(2\left(\dfrac{p-1}{2} + h_0\right)-1\right)p^2 + hp + 1\\
&=hp^3 + (2h_0 -2)p^2 + hp + 1.
\end{align*}  
Since $0\leq 2h_0-2 \leq p-3$, we have

\begin{align*}
    \sum_{A \in S_N} \binom{N}{A}a^{N-A} &= \binom{N}{A_{hp}}a^{N-A_{hp}}\\ 
    & = \binom{p-1}{h} \binom{p-2}{2h_0-2} \binom{p-1}{h} \binom{1}{1}a^{N-A_{hp}} \ne 0.
\end{align*}

{\underline{Case 2}}: $p+1 \leq h \leq p^2$ 

Consider $N = p^3-1$ whose $p$-adic expansion is $(p-1)p^2+(p-1)p+p-1$. By~\Cref{count}, since $N<\ell$,  the set $S_N$ has at most one element.  We compute
\begin{align*}
    A_{h} &= h(p^4+p^3+p^2+p+1)-\dfrac{(hp^2+1)(p^3-1)}{p-1}\\
&= h(p^4+p^3+p^2+p+1)-(hp^2+1)(p^2+p+1)\\
&=hp+h-p^2-p-1. 
\end{align*}
Since $p+1 \leq h \leq p^2,$ we have $0 <  A_h\leq p^3-p-1 <N$,
so $A_h$ is the unique element  in $S_N$. Let $A_h=a_2p^2 + a_1p + a_0$ be the $p$-adic expansion of $A_h$. Since $a_0, a_1, a_2 \leq p-1$,  Lucas' Theorem and~\Cref{binom}(i) imply that

\begin{align*}
    \sum_{A \in S_N} \binom{N}{A}a^{N-A} = \binom{N}{A_{h}}a^{N-A_{h}} = \binom{p-1}{a_2} \binom{p-1}{a_1} \binom{p-1}{a_0} a^{N-A_{h}} \ne 0.
\end{align*}

\underline{Case 3}:  $p^2+2 \leq h \leq p^2+p$ 

As in Case 1, consider $N = p^4-p^2-p+1=(p-1)p^3+(p-2)p^2+(p-1)p+1$. Then $S_N$ has at most one element. We compute
\begin{align*}
    A_{hp-p-1}
    &=(hp-p-1)(p^4+p^3+p^2+p+1)-\dfrac{(hp^2+1)(p^4-p^2-p+1)}{p-1}\\
&= (hp-p-1)(p^4+p^3+p^2+p+1)-(hp^2+1)(p^3+p^2-1)\\
    &=-p^5-2p^4+(h-3)p^3+(2h-3)p^2+(h-2)p.
\end{align*}
\noindent For $p^2+2 \leq h \leq p^2+p,$ we have
$ 0 < p^2   \leq A_{hp-p-1} \leq p^4-2p^2-2p < N,$
so $A_{hp-p-1}$ is the unique element in $S_N$.
Write $h = p^2+2 + h_0$ with $0 \leq h_0 \leq p-2$. Then $$A_{hp-p-1}= h_0p^3+(2h_0+1)p^2+h_0p.$$

\underline{Subcase 3.1}: $0 \leq h_0 \leq (p-3)/2$ 

Since $2h_0+1 \leq p-2$, if follows that
\begin{align*}
    \sum_{A \in S_N} \binom{N}{A}a^{N-A}& = \binom{N}{A_{hp-p-1}}a^{N-A_{hp-p-1}}\\
    &= \binom{p-1}{h_0} \binom{p-2}{2h_0+1} \binom{p-1}{h_0} \binom{1}{0}a^{N-A_{hp-p-1}} \neq 0.
\end{align*}

\underline{Subcase 3.2}: $(p-3)/2 < h_0 \leq p-2$ 

Write $h_0 = (p-3)/2 + h_1$ with $1 \leq h_1 \leq (p-1)/2$. Then $$A_{hp-p-1}= (h_0+1)p^3+(2h_1-2)p^2+h_0p.$$
and, since $2h_1-2 \leq p-3$, we have

\begin{align*}
    \sum_{A \in S_N} \binom{N}{A}a^{N-A} & = \binom{N}{A_{hp-p-1}}a^{N-A_{hp-p-1}}\\ 
    & = \binom{p-1}{h_0+1} \binom{p-2}{2h_1-2} \binom{p-1}{h_0} \binom{1}{0}a^{N-A_{hp-p-1}} \neq 0.
\end{align*}

\noindent
By~\Cref{MPWbin}, $f(x)$ cannot permute $\f_{p^5}$ with $r \pmod{\ell} = hp^2+1$ for $1 \leq h \leq p-1$, $p+1 \leq h \leq p^2$, and $p^2 +2 \leq h \leq p^2+p$, and this contradicts our assumption. The remaining values of $h$ are $0, p, p^2+1, p^2+p+1$, hence necessarily 
$r \pmod{\ell}\in \{1,  p^3+1, p^4+p^2+1, \ell-p \}$. This completes the proof.
\end{proof}

\subsection{Fields \texorpdfstring{$\mathbb{F}_{p^6}$, $p$}{Fp6, p} an odd prime} \label{6}

The characterization of  permutation binomials of the form $f(x) = x^r(x^{p-1} + a)$ over $\f_{p^6}$ is the same one as for $\mathbb{F}_{q^2}$, $\mathbb{F}_{q^3}$, and $\mathbb{F}_{q^4}$ from~\Cref{q234final}.

\begin{prop}\label{p6}
 Let $f(x) = x^r(x^{p-1} + a) \in \mathbb{F}_{p^6}[x]$ with $p$ an odd prime and $a\neq 0$, and let $\ell=p^5+p^4+p^3+p^2 +p+1$. Then $f(x)$ permutes $\mathbb{F}_{p^6}$ if and only if  $(-a)^{\ell} \ne 1$, $\gcd(r, p-1) = 1$, and $r \pmod{\ell}\in\{1, \ell- p\}$.
\end{prop}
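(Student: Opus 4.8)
\textbf{Proof strategy for~\Cref{p6}.}
The plan is to mirror the structure of the proof of~\Cref{p5}, exploiting the fact that $e=6$ is even. The `if' direction is immediate from~\Cref{part}(i): when $r\pmod{\ell}\in\{1,\ell-p\}$ with $(-a)^\ell\neq 1$ and $\gcd(r,p-1)=1$, the binomial permutes $\mathbb F_{p^6}$. For the converse, suppose $f(x)$ permutes $\mathbb F_{p^6}$. I would first apply~\Cref{rth} to force $r\pmod{\ell}=hp+1$ for some integer $h$. Since $e=6$ is even, \Cref{qplusone4} then requires $p+1\mid h$, so write $h=k(p+1)$ with $0\leq k\leq \ell-p^5-1 = p^4+p^3+p^2+p+1$, giving $r\pmod{\ell}=k(p+1)p+1 = k(\ell-p^5-1)+1$. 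Because $p$ is odd, \Cref{sth} further demands $p\mid k(p+1)$, hence $p\mid k$.

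The heart of the proof is to rule out every value of $k$ in the range $1\leq k\leq \ell-p^5-1-1 = p^4+p^3+p^2+p$ with $p\mid k$, leaving only $k=0$ (yielding $r\pmod{\ell}=1$) and $k=p^4+p^3+p^2+p+1$ (yielding $r\pmod{\ell}=\ell-p$). The natural tool is~\Cref{generalLemma}, which handles exactly the exponents of the shape $h'(\ell-p^{e-1}-1)+1$ for $1\leq h'\leq q-1$; here the relevant parameter would be $k$ playing the role of $h'$ in that proposition. However, \Cref{generalLemma} only covers $1\leq k\leq p-1$, whereas for $e=6$ the range of surviving $k$ is as large as $p^4+p^3+p^2+p$. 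So the real work is to dispose of the larger values of $k$ by directly invoking~\Cref{MPWbin} with carefully chosen values of $N$, exactly as in the three cases of~\Cref{p5}. For each subrange of $k$ I would pick an $N$ with $q-1\mid N$ (so that~\Cref{rlargerindex} reduces everything to $r<\ell$), compute the unique or double element of $S_N$ via~\Cref{count}, write out its $p$-adic expansion, and show the Lucas sum in~\eqref{sum} is nonzero using~\Cref{binom}. The condition $p\mid k$ together with~\Cref{binom}(ii) is what kills the potentially cancelling term, just as $p\mid h$ does in~\Cref{generalLemma}.

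I expect the main obstacle to be bookkeeping: partitioning the range $1\leq k\leq p^4+p^3+p^2+p$ into intervals each governed by a single convenient $N$, and verifying in each interval that $S_N$ has the claimed elements with the claimed $p$-adic digits. The choices $N=p^5-1$, $N=p^4-\cdots$, and $N=2p^5-p^4-1$ (analogues of the three cases in~\Cref{p5} and the single $N$ of~\Cref{generalLemma}) should cover the subranges, but confirming that the $q$-adic digit computations land in the valid range $[0,p-1]$ for every $k$ in each interval is the delicate step. The parity/divisibility argument that ensures nonvanishing is routine once the digits are pinned down: for $q$ odd, $p\mid k$ guarantees one binomial coefficient in the cancelling term is $\equiv 0\pmod p$ via~\Cref{binom}(ii), leaving a single nonzero term and hence a nonzero Lucas sum, contradicting~\Cref{MPWbin}. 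Assembling these cases yields that only $k\in\{0,p^4+p^3+p^2+p+1\}$ survive, completing the characterization.
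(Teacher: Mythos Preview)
Your overall plan matches the paper's: apply \Cref{part}(i) for sufficiency, then combine \Cref{rth}, \Cref{qplusone4}, and \Cref{sth} to pin down $r\pmod\ell$, and finish by eliminating the intermediate residues via \Cref{MPWbin}. However, your bookkeeping in the reduction step is off in a way that makes the remaining work look much harder than it is. After \Cref{rth} one has $r\pmod\ell=hp+1$ with $0\le h\le p^4+p^3+p^2+p$; after \Cref{qplusone4} one gets $h=k(p+1)$ with $0\le k\le p^3+p$ (not $p^4+p^3+p^2+p+1$); and \Cref{sth} then forces $p\mid h$, hence $p\mid k$, so writing $k=pn$ yields $r\pmod\ell=n(p^3+p^2)+1$ with $0\le n\le p^2+1$. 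Your displayed identity $k(p+1)p+1=k(\ell-p^5-1)+1$ is simply false (the left side is $k(p^2+p)+1$, the right is $k(p^4+p^3+p^2+p)+1$); you seem to have conflated the parameter here with the one in \Cref{generalLemma}.

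Once the correct range is in hand there are only $p^2$ residues to exclude, and the paper handles all of $1\le n\le p^2$ with a \emph{single} choice $N=p^5-p^3+p^2-1=(p^3+1)(p^2-1)$. Since $N<\ell$, \Cref{count} gives $|S_N|\le 1$, and the unique element is $A_j=(p^3+1)(p^2-n)$, whose $p$-adic digits are read off directly and give a nonzero Lucas product by \Cref{binom}(i) alone. No splitting into subranges, no multiple values of $N$, and no cancellation argument via \Cref{binom}(ii) are needed here, because $S_N$ never has two elements.
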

\begin{proof}
By~\Cref{part}, if $(-a)^{\ell} \ne 1$, $r \pmod{\ell} \in \{1, \ell-p\}$, and $\gcd(r, p-1) = 1$, then $f(x)$ permutes $\mathbb{F}_{p^6}$. Conversely, suppose that $f(x)$ permutes $\f_{p^6}$. By combining Propositions~\ref{rth}-\ref{sth}, we may assume $r \pmod{\ell}= n(p^3 + p^2) + 1$ with $0 \leq n \leq p^2 + 1$.

We will show that we cannot have $1 \leq n \leq p^2$. Let $N = p^5 - p^3 + p^2-1$ with $p$-expansion given by  $(p-1)p^4 + (p-1)p^3 + (p-1)p + p-1$. Then $\lfloor N/\ell \rfloor = 0$. By~\Cref{count}, $\lvert S_N \rvert \leq 1$. Let $j = n(p^2 +p -1) + 1$. We compute 
\begin{align*}
A_j &=[n(p^2 +p -1) + 1)]\ell-\dfrac{[n(p^3 + p^2) + 1](p^5 - p^3 + p^2-1)}{p-1} \\    
    & = [n(p^2 +p -1) + 1]\ell-(np^3 + np^2 + 1)(\ell-p^5-p^2) \\ 
    &= n p^2\ell+np\ell -n\ell+\ell-np^3\ell+np^8+np^5-np^2\ell+np^7+np^4-\ell+p^5+p^2\\
    & = n[p\ell-\ell+ p^3(-\ell+p^5+p^4+p^2+p)]+p^5+p^2\\
  &=  n(-p^3-1)+p^5+p^2\\
     &=p^5 - np^3+ p^2-n.
\end{align*}
Since $1\leq n\leq p^2$, we have
$0\leq A_j\leq N,$
so $S_N=\{A_j\}$.

\underline{Case 1}: $1 \leq n < p^2$ 

Write $n = n_1p + n_0$, where $0 \leq n_0, n_1 \leq p-1$. If $n_0 \neq 0$, the $p$-expansion of $A_j$ is 
$$
    A_j = (p-n_1-1)p^4 + (p-n_0)p^3 + (p-n_1-1)p + (p-n_0).
$$
By~\Cref{binom}(i), it turns out that
\begin{equation*}
    \sum_{A \in S_N} \binom{N}{A}a^{N-A} = \binom{p-1}{p-1-n_1} \binom{p-1}{p-n_0} \binom{p-1}{p-1-n_1} \binom{p-1}{p-n_0}a^{N-A_j} \ne 0.
\end{equation*}
\noindent If $n_0 = 0$, then $n_1 \neq 0$ and $A_j = (p-n_1)p^4 + (p-n_1)p$, where $1 \leq p-n_1 \leq p-1$. Thus, 

\begin{equation*}
    \sum_{A \in S_N} \binom{N}{A}a^{N-A} = \binom{p-1}{p-n_1} \binom{p-1}{p-n_1}a^{N-A_j} \ne 0.
\end{equation*}

\underline{Case 2}: 
$n = p^2$

Then $A_j =0$ and $\sum_{A \in S_N} \binom{N}{A}a^{N-A} = \binom{N}{0}a^{N} \neq 0$. 

By~\Cref{MPWbin}, $f(x)$ cannot permute $\f_{p^6}$ with $r \pmod{\ell} = n(p^3+p^2)+1, \ 1 \leq n \leq p^2$, and this contradicts our assumption. Hence  $n \in \{0, p^2+1\}$ and  $r\pmod \ell \in \{1, \ell -p\}$.
\end{proof}


\subsection{Fields $\f_{q^e}$}

We conjecture that all  permutations binomials of  the  form $x^r\left(x^{q-1}+a\right)$ over $\f_{q^e}$  are the ones described in~\Cref{Compos}. More specifically, we have the following.

\bigskip

\begin{conjecture} \label{conjecture}
Let $f(x) = x^r(x^{q-1} + a)\in\mathbb{F}_{q^e}[x]$ with $e\geq 2$ and $a\neq 0$,  and let
$\ell=q^{e-1}+\cdots +q+1$.
Then $f(x)$ permutes $\fqe$ if and only if $f(x)$ is congruent to the composition of a linearized binomial $L(x)=x^{q^h}+ax$ and the monomial $x^r$ modulo $x^{q^e}-x$, where  $\left(-a\right)^{\ell}\neq 1$   and $\gcd(r,q-1)=1$. 
\end{conjecture}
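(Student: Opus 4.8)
The `if' direction is already settled: if $f(x)$ is congruent to the composition $L(x^r)$ with $(-a)^\ell\neq 1$ and $\gcd(r,q-1)=1$, then $f(x)$ permutes $\fqe$ by \Cref{Compos} (together with \Cref{Comp}, \Cref{Congr}, and \Cref{lpb1}). The entire difficulty therefore lies in the converse: one must show that \emph{every} permutation binomial of this form is forced to be such a composition. By \Cref{Comp} and \Cref{Congr}, being such a composition is equivalent to $r\pmod\ell$ lying in the set of $\phi(e)$ residues $\sum_{i=0}^{k-1}q^{hi}\pmod\ell$, indexed by the $h$ coprime to $e$ with $k\equiv h^{-1}\pmod e$. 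So the plan is to prove that if $r\pmod\ell$ is \emph{not} one of these residues, then $f(x)$ does not permute $\fqe$, thereby completing the program begun in \Cref{nece}.

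The reduction steps are already in place and should be applied first. By \Cref{rth} we may assume $r\pmod\ell=hq+1$; by \Cref{qplusone4} we may assume $q+1\mid h$ when $e$ is even; and by \Cref{sth} we may assume $p\mid h$ when $q$ is odd. \Cref{generalLemma} then removes the family $r\pmod\ell=h(\ell-q^{e-1}-1)+1$ with $1\le h\le q-1$. These results already suffice for $2\le e\le 4$ (any $q$) and for $e\in\{5,6\}$ ($q$ an odd prime), as shown in \Cref{special}. The remaining task is to rule out, for every $e$, all surviving bad residues $hq+1$ that are not among the $\phi(e)$ good ones. The natural mechanism is the one used throughout \Cref{nece}: for a bad residue $s\pmod\ell$, exhibit an $N\in\{1,\dots,q^e-2\}$ with $q-1\mid N$ for which \Cref{count} forces $|S_N|\le 2$, compute the one or two elements $A_j$ explicitly, read off their $q$-adic digits, and invoke \Cref{Lucas} together with \Cref{binom} to conclude that $\sum_{A\in S_N}\binom{N}{A}a^{N-A}\neq 0$; then \Cref{rlargerindex} lifts the conclusion from $r=s$ to all $r\equiv s\pmod\ell$.

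The main obstacle is producing this argument \emph{uniformly} in $e$. Each family of bad residues in \Cref{nece} required a hand-tailored choice of $N$ (for instance $q^{e-1}-1$, $2q^{e-1}-2$, or $2q^{e-1}-q^{e-2}-1$) chosen so that $S_N$ is a single element or an explicit pair, followed by a delicate verification via \Cref{binom} that no binomial-coefficient factor vanishes modulo $p$; the surviving subcases of \Cref{generalLemma} already show how sensitive the nonvanishing is to the parity of $q$ and to $p\mid h$. For general $e$ the number of bad residue families grows and the digit patterns of the $A_j$ depend intricately on $h$ and $e$, so the crux is to find a single parametrized family of test exponents $N=N(h,e)$ together with a general nonvanishing lemma for the associated Lucas product that covers all bad residues at once. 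I expect this to be the decisive step, and the reason the full statement remains a conjecture.

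A cleaner alternative route, which I would pursue in parallel, is to pass to the index-$\ell$ quotient. Since $(q^e-1)/\ell=q-1$, the criterion of Akbary, Ghioca, and Wang~\cite{akbary2009permutation} reduces the permutation property of $f(x)$ to $\gcd(r,q-1)=1$ together with the bijectivity of $g(\zeta)=\zeta^r(\zeta+a)^{q-1}$ on the group $\mu_\ell$ of $\ell$-th roots of unity, which is well defined precisely because $(-a)^\ell\neq 1$ guarantees $\zeta+a\neq 0$ for all $\zeta\in\mu_\ell$. The good residues correspond to those $g$ that arise from the linearized map $L(x)=x^{q^h}+ax$, and a structural proof that these are the only bijective $g$ would settle the conjecture without case-by-case digit chasing; the obstruction here is understanding the nonlinear factor $(\zeta+a)^{q-1}$ well enough to classify exactly when the induced self-map of $\mu_\ell$ is a permutation.
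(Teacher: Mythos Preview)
The statement you were asked to prove is labeled a \emph{Conjecture} in the paper, and the paper does not supply a proof of it; it only verifies the conjecture for $e\in\{2,3,4\}$ (any $q$) and for $e\in\{5,6\}$ ($q$ an odd prime) via the case analyses in \Cref{special}, together with a computer search for $q^e<10^8$. There is therefore no ``paper's own proof'' to compare your proposal against.

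Your write-up is an honest and accurate assessment of the state of affairs rather than a proof. You correctly isolate the established direction (via \Cref{Compos}), correctly reduce the converse to the combinatorial problem of eliminating the residues $r\pmod\ell$ not of the form $\sum_{i=0}^{k-1}q^{hi}$, and correctly identify the mechanism used in \Cref{nece} (choosing a test exponent $N$ with $q-1\mid N$ so that $|S_N|\le 2$, then forcing a nonzero Lucas product). You also correctly flag the obstruction: the paper's arguments are ad hoc in their choice of $N$ and in the digit-by-digit nonvanishing checks, and no uniform choice of $N=N(h,e)$ with an accompanying general nonvanishing lemma is known. That is precisely why the statement is recorded as a conjecture. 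Your alternative route through the Akbary--Ghioca--Wang reduction to the map $g(\zeta)=\zeta^r(\zeta+a)^{q-1}$ on $\mu_\ell$ is a reasonable reformulation, but it does not by itself supply the missing classification, and the paper does not pursue it either.

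In short: your proposal is a coherent research plan that mirrors the paper's partial progress, but it is not a proof, and you say as much. No proof currently exists in the paper to compare it to.
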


If~\Cref{conjecture} holds true, by~\Cref{Compos} the only values of $r$ for which  $f(x)=x^r\left(x^{q-1}+a\right)$ permutes $\f_{q^e}$ satisfy $r = s\ell + \sum_{i=0}^{k-1}q^{hi}  \pmod{q^e-1}$, where $\gcd(h, e) = 1$, $k \pmod{e} = h^{-1}$, and $s$ is an integer. The results in~\Cref{234,5,6} confirm~\Cref{conjecture} for  fields $ \f_{q^e}$, with $e=2,3,4$,  and $\f_{p^e}$, with $e=5,6$ and an odd prime $p$. An exhaustive search for all permutation binomials of the form $f(x)$ for $q^e < 10^8$ also verified the conjecture. To optimize our search, we used  results from~\Cref{suff,nece,special} along with  the following.

\begin{prop}
Let $\xi$ be a primitive element of $\mathbb{F}_{q^e}^*$, and $f_m(x) = x^r(x^{q-1} + \xi^m) \in \mathbb{F}_{q^e}[x]$. If $f_j(x)$ does not permute $\mathbb F_{q^e}$ for all $0 \leq j < q-1$, then $f_m(x)$ does not permute $\mathbb F_{q^e}$ for all $0 \leq m < q^e-1$.
\end{prop}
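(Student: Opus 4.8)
The plan is to exploit the standard fact that the property of permuting $\fqe$ is preserved under pre- and post-composition with bijections, combined with the scaling substitution $x \mapsto cx$ for $c \in \fqe^*$. Concretely, I would first record the effect of this substitution on the constant term. Taking $c = \xi^i$ and pulling out the factor $\xi^{i(q-1)}$, one computes
$$
f_m(\xi^i x) = \xi^{ir}\,x^r\!\left(\xi^{i(q-1)}x^{q-1} + \xi^m\right) = \xi^{i(r+q-1)}\,x^r\!\left(x^{q-1} + \xi^{\,m-i(q-1)}\right) = \xi^{i(r+q-1)} f_{m-i(q-1)}(x),
$$
where the index $m - i(q-1)$ is read modulo $q^e-1$. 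Since $x \mapsto \xi^i x$ and $y \mapsto \xi^{i(r+q-1)} y$ are both bijections of $\fqe$, the composite identity above shows that $f_m$ permutes $\fqe$ if and only if $f_{m-i(q-1)}$ does, for every integer $i$.

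The next step is to analyze the orbit of $m$ under the map $m \mapsto m - i(q-1) \pmod{q^e-1}$. Because $q \equiv 1 \pmod{q-1}$, we have $q-1 \mid q^e-1$, so $\gcd(q-1,q^e-1) = q-1$ and the subgroup of $\mathbb{Z}/(q^e-1)\mathbb{Z}$ generated by $q-1$ is exactly the set of residues divisible by $q-1$. Hence, as $i$ ranges over the integers, $m - i(q-1) \bmod (q^e-1)$ sweeps out precisely the full residue class of $m$ modulo $q-1$. In particular, setting $j := m \bmod (q-1) \in \{0,\dots,q-2\}$ and choosing $i = (m-j)/(q-1)$ (an integer, since $q-1 \mid m-j$) yields $m - i(q-1) \equiv j \pmod{q^e-1}$, so $f_m$ permutes $\fqe$ if and only if $f_j$ does.

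Finally I would take the contrapositive. Given any $m$ with $0 \le m < q^e-1$, reduce it to $j = m \bmod (q-1) \in \{0,\dots,q-2\}$; by the previous step $f_m$ and $f_j$ are simultaneously permutations or non-permutations. Thus if $f_j$ fails to permute $\fqe$ for every $j$ in $\{0,\dots,q-2\}$, then $f_m$ fails for every $m$, which is the claim. The only real content — and the single point requiring care — is the index bookkeeping modulo $q^e-1$, specifically the verification that the scaling substitution sweeps out exactly one complete residue class modulo $q-1$; this rests on the elementary identity $\gcd(q-1,q^e-1)=q-1$. Once that is settled, the rest of the argument is purely formal.
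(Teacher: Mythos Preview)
Your argument is correct and is essentially the same as the paper's. Both proofs rest on the single identity obtained from the substitution $x\mapsto \xi^s x$, which shifts the exponent of the constant term by $s(q-1)$; you phrase this as ``$f_m$ permutes iff $f_{m-i(q-1)}$ does'' via pre- and post-composition with bijections, while the paper writes $m=s(q-1)+j$ and exhibits the collision $f_m(\xi^{i_0+s})=f_m(\xi^{i_1+s})$ directly from a collision $f_j(\xi^{i_0})=f_j(\xi^{i_1})$---the same computation read forwards versus backwards.
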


\begin{proof}  Let  $0 \leq m < q^e-1$. Write $m = s(q-1) + j$ with  $0 \leq j < q-1$. Since $f_j(x)$ does not permute $\mathbb{F}_{q^e}$, there exist integers $i_0$ and $i_1$   such that $0\leq i_0 < i_1 < q^e-1$ and $f_j(\xi^{i_0}) = f_j(\xi^{i_1})$. Thus, $\xi^{rs + s(q-1)}f_j(\xi^{i_0}) = \xi^{rs + s(q-1)}f_j(\xi^{i_1})$ implies that $\xi^{r(i_0 + s)}(\xi^{(i_0+s)(q-1)} + \xi^m) = \xi^{r(i_1 + s)}(\xi^{(i_1+s)(q-1)} + \xi^m)$. Therefore, $f_m(\xi^{i_0+s}) = f_m(\xi^{i_1+s})$, and $f_m(x)$ does not permute $\mathbb{F}_{q^e}$.
\end{proof}

\bigskip

\section*{Acknowledgments}

The first author received support for this project
provided by  PSC-CUNY grants, awards \#62094-00 50 and \#63541-00 51, jointly funded by The Professional Staff Congress and The City University of New York. The research of the third author was supported in part by the Puerto Rico Louis Stokes Alliance for Minority Participation (PR-LSAMP) program at the University of Puerto Rico, NSF grant \#1400868.
Part of this work was carried out when the third author  visited the first author in Summer 2019. He thanks the support and warm hospitality received at New York City College of Technology.

\bibliographystyle{plainurl}
\bibliography{refs.bib}

\begin{thebibliography}{10}

\bibitem{akbary2009permutation}
A.~Akbary, D.~Ghioca, and Q.~Wang.
\newblock On permutation polynomials of prescribed shape.
\newblock {\em Finite Fields Appl.}, 15(2):195--206, 2009.

\bibitem{hermite}
C.~Hermite.
\newblock Sur les fonctions de sept lettres.
\newblock {\em C. R. Acad. Sci. Paris}, 57:750--757, 1863.

\bibitem{MR3071828}
X.~Hou.
\newblock A class of permutation binomials over finite fields.
\newblock {\em J. Number Theory}, 133(10):3549--3558, 2013.

\bibitem{MR3293406}
X.~Hou.
\newblock Permutation polynomials over finite fields -- a survey of recent
  advances.
\newblock {\em Finite Fields Appl.}, 32:82--119, 2015.

\bibitem{MR3329981}
X.~Hou.
\newblock A survey of permutation binomials and trinomials over finite fields.
\newblock In {\em Topics in finite fields}, volume 632 of {\em Contemp. Math.},
  pages 177--191. Amer. Math. Soc., Providence, RI, 2015.

\bibitem{MR3587259}
X.~Hou.
\newblock Permutation polynomials of {$\mathbb{F}_{q^2}$} of the form
  {$aX+X^{r(q-1)+1}$}.
\newblock In {\em Contemporary developments in finite fields and applications},
  pages 74--101. World Sci. Publ., Hackensack, NJ, 2016.

\bibitem{houarxiv2016}
X.~Hou.
\newblock Permutation polynomials of the form $x^r(a+x^{2(q-1)})$ -- a
  nonexistence result.
\newblock {\em arXiv:1609.03662}, 2016.

\bibitem{MR3276311}
X.~Hou and S.~D. Lappano.
\newblock Determination of a type of permutation binomials over finite fields.
\newblock {\em J. Number Theory}, 147:14--23, 2015.

\bibitem{MR3333142}
S.~D. Lappano.
\newblock A note regarding permutation binomials over {$\mathbb{F}_{q^2}$}.
\newblock {\em Finite Fields Appl.}, 34:153--160, 2015.

\bibitem{LiQuChFFA2017}
K.~Li, L.~Qu, and X.~Chen.
\newblock New classes of permutation binomials and permutation trinomials over
  finite fields.
\newblock {\em Finite Fields Appl.}, 43:69--85, 2017.

\bibitem{LiuArX2019}
X.~Liu.
\newblock Some results about permutation properties of a kind of binomials over
  finite fields.
\newblock {\em arXiv:1906.09168}, 2019.

\bibitem{MPW}
A.~Masuda, D.~Panario, and Q.~Wang.
\newblock The number of permutation binomials over {$\mathbb F_{4p+1}$} where
  {$p$} and {$4p+1$} are primes.
\newblock {\em Electron. J. Combin.}, 13(1):Research Paper 65, 15, 2006.

\bibitem{MaZiTransAMS2009}
A.~M. Masuda and M.~E. Zieve.
\newblock Permutation binomials over finite fields.
\newblock {\em Trans. Amer. Math. Soc.}, 361(8):4169--4180, 2009.

\bibitem{MR3087321}
G.~L. Mullen and D.~Panario, editors.
\newblock {\em Handbook of finite fields}.
\newblock Discrete Mathematics and its Applications (Boca Raton). CRC Press,
  Boca Raton, FL, 2013.

\bibitem{Wa2019}
Q.~Wang.
\newblock Polynomials over finite fields: an index approach.
\newblock {\em in the Proceedings of Pseudo-Randomness and Finite Fields,
  Multivariate Algorithms and their Foundations in Number Theory, October
  15-19, Linz, 2018, Combinatorics and Finite Fields. Difference Sets,
  Polynomials, Pseudorandomness and Applications}, pages 319--348, 2019.

\bibitem{zarxivredei13}
M.~E. Zieve.
\newblock Permutation polynomials on $\mathbb{F}_q$ induced from bijective
  {R}\'edei functions on subgroups of the multiplicative group of $\mathbb
  {F}_q$.
\newblock {\em arXiv:1310.0776}, 2013.

\end{thebibliography}

\end{document}